\theoremstyle{plain}
\newtheorem{thm}{Theorem}[section]
\newtheorem{cor}[thm]{Corollary}
\newtheorem{lem}[thm]{Lemma}
\newtheorem{exa}[thm]{Example}
\newtheorem{prop}[thm]{Proposition}
\newtheorem{defi}[thm]{Definition}
\newtheorem{rem}[thm]{Remark}
\numberwithin{equation}{section}
\def\kk{\mathbbm{k}}
\begin{document}
\title{Bilinear forms on the Green rings of finite dimensional Hopf algebras}
\author{Zhihua Wang}
\address{Z. Wang\newline Department of Mathematics, Taizhou College, Nanjing Normal University,
Taizhou 225300, China}
\email{mailzhihua@126.com}
\author{Libin Li}
\address{L. Li\newline School of Mathematical Science, Yangzhou University,
Yangzhou 225002, China}
\email{lbli@yzu.edu.cn}
\author{Yinhuo Zhang}
\address{Y. Zhang\newline Department of Mathematics and Statistics, University of Hasselt, Universitaire Campus, 3590 Diepeenbeek, Belgium }
\email{yinhuo.zhang@uhasselt.be}
\date{}
\subjclass[2010]{16G10, 16T05, 18D10}
\keywords{Green ring, stable Green ring, Grothendieck ring, bilinear form, bi-Frobenius algebra}

\begin{abstract}  In this paper,  we study the Green ring and the stable Green ring of a finite dimensional Hopf algebra by means of bilinear forms. We show that the Green ring of a Hopf algebra of finite representation type is a Frobenius algebra over $\mathbb{Z}$ with a dual basis associated to almost split sequences.  On the stable Green ring we define a new bilinear form which is more accurate to determine the bi-Frobenius algebra structure on the stable Green ring. We show that the complexified stable Green algebra is a group-like algebra, and hence a bi-Frobenius algebra, if the bilinear form on the stable Green ring is non-degenerate.
\end{abstract}
\maketitle

\section{\bf Introduction }
In \cite{WLZ1, WLZ2} we studied the Green rings of  finite dimensional pointed rank one Hopf algebras of both nilpotent and non-nilpotent type respectively.  One of the interesting properties possessed by those Green rings is that the complexified stable Green algebras are group-like algebras, and consequently bi-Frobenius algebras, introduced and investigated by Doi and Takeuchi (cf. \cite{Doi1,Doi2, Doi3, DT}). The notion of a  bi-Frobenius algebra is a  natural generalization of a finite dimensional Hopf algebra,  and possesses  many properties that a finite dimensional Hopf algebra does.  However, to find more examples of  bi-Frobenius algebras, which are not Hopf algebras, is not easy at all.

In the context of this paper, the stable Green rings of finite dimensional Hopf algebras may provide interesting examples of group-like algebras and bi-Frobenius algebras in certain circumstances. Moreover, these bi-Frobenius algebras are themselves transitive fusion rings coming from non-semisimple stable categories. To do so, our principal technical tools are the bilinear forms on the Green rings introduced, e.g., in \cite{Be, NR, Wi2}. These bilinear forms are defined by means of dimensions of spaces of morphisms. One of the forms on the Green ring induces a from on the stable Green ring. The induced form on the stable Green ring is degenerate in general, we give some equivalent conditions for the induced form to be non-degenerate. If the form is non-degenerate and the Hopf algebra considered is of finite representation type, the stable Green ring becomes a transitive fusion ring although the stable category of the Hopf algebra is not necessary semisimple. In this case, the complexified stable Green algebra is a group-like algebra, and hence a bi-Frobenius algebra. To see a concrete example of such a stable Green algebra, we present explicitly the stable Green algebra of a Radford Hopf algebra from the polynomial point of view. The paper is organized as follows.

Let $H$ be a finite dimensional Hopf algebra and $H$-mod the category of finite dimensional (left) $H$-modules. In Section 2, we use quantum traces of morphisms of $H$-modules to characterize when the trivial module $\kk$ is a direct summand of the decomposition of the tensor product of any two indecomposable modules (see Theorem \ref{thm2.7}). Consequently, we answer the question raised by Cibils \cite[Remark 5.8]{Ci}. In particular, we apply the techniques from \cite{GMS, Y} to determine whether or not the trivial module $\kk$ appears in the decomposition of the particular tensor product $X\otimes X^*$ (resp. $X^*\otimes X$) for any indecomposable module $X$. Most results stated in this section are useful for next sections.

In Section 3, we follow the approach of \cite{Be} and impose three bilinear forms on the Green ring $r(H)$ of the Hopf algebra $H$. One of the forms is the bilinear form determined by $\langle [X],[Y]\rangle_1=\dim_{\kk}\text{Hom}_H(X,Y)$. Another is the form $\langle[X],[Y]\rangle_2=\dim_{\kk}\mathcal {P}(X,Y)$, where $\mathcal {P}(X,Y)$ stands for the space of morphisms from $X$ to $Y$ which factor through a projective module. The two forms are both non-degenerate and they are the same up to a unit. The third is the form $\langle[X],[Y]\rangle_3=\langle[X],[Y]\rangle_1-\langle[X],[Y]\rangle_2$. This form is degenerate since the ideal $\mathcal {P}$ of $r(H)$ generated by projective $H$-modules is contained in the radical of the form $\langle-,-\rangle_3$. If $H$ is of finite representation type, we prove that the radical of the form $\langle-,-\rangle_3$ is equal to $\mathcal {P}$ if and only if there are no periodic $H$-modules of even period.

In Section 4, we give several one-sided ideals of the Green ring $r(H)$ and use them to obtain key information about the nilpotent radical and central primitive idempotents of $r(H)$. The Green ring $r(H)$ is associated with an associative and non-degenerate bilinear form $([X],[Y]):=\langle[X],[Y^*]\rangle_1$. Thus, $r(H)$ is a Frobenius algebra over $\mathbb{Z}$ if $H$ is of finite representation type. In this case, the dual basis of $r(H)$ with respect to the form $(-,-)$ can be described partly by virtue of almost split sequences of $H$-modules. Let $\mathcal {P}^{\perp}$ be the subgroup of $r(H)$ which is orthogonal to $\mathcal {P}$ with respect to the form $(-,-)$. We show that the Grothendieck ring $G_0(H)$ of $H$ is a quotient ring of $r(H)$: $r(H)/\mathcal {P}^{\perp}\cong G_0(H)$. This isomorphism will be used in Section 5 to characterize when the nilpotent radical of $r(H)$ is equal to the intersection $\mathcal {P}\cap\mathcal {P}^{\perp}$.

In Section 5, we study the stable Green ring of $H$. The Green ring of the stable category $H$-\underline{mod} of $H$ is called the stable Green ring of $H$, denoted $r_{st}(H)$. As the stable category $H$-\underline{mod} is a quotient category of $H$-mod, the stable Green ring $r_{st}(H)$ is in fact a quotient ring of the Green ring $r(H)$: $r_{st}(H)\cong r(H)/\mathcal {P}$. This result can be used to define a new form $[-,-]_{st}$ on $r_{st}(H)$. The form $[-,-]_{st}$ is induced from the form $(-,-)$ on $r(H)$, and it is associative but degenerate in general. We determine both the left and right radicals of the form $[-,-]_{st}$ and give several equivalent conditions for the non-degeneracy of the form.
If $H$ is of finite representation type and the form $[-,-]_{st}$ is non-degenerate, the nilpotent radical of $r(H)$ is equal to $\mathcal {P}\cap\mathcal {P}^{\perp}$ (which is exactly the kernel of the Cartan map) if and only if the Grothendieck ring $G_0(H)$ is semiprime. The complexified stable Green algebra under the same assumption becomes a group-like algebra, and hence a bi-Frobenius algebra.

In Section 6, we consider the Radford Hopf algebra, a finite dimensional pointed Hopf algebra of rank one. The Green ring and the stable Green ring of the Radford Hopf algebra have been presented in \cite{WLZ2} by generators and relations. In this case, the bilinear form $[-,-]_{st}$ is non-degenerate, and hence the complexified stable Green algebra of the Radford Hopf algebra admits a bi-Frobenius algebra structure. We describe explicitly this structure in terms of polynomials.

Throughout, $H$ is an arbitrary finite dimensional Hopf algebra over an algebraically closed field $\mathbbm{k}$; all $H$-modules considered here are objects in $H$-mod. We denote $P_M$ and $I_M$ the projective cover and injective envelop of an $H$-module $M$ respectively. The tensor product $\otimes$ stands for $\otimes_{\mathbbm{k}}$. For any two $H$-modules $X$ and $Y$, the notation $X\mid Y$ (resp. $X\nmid Y$) means that $X$ is (resp. is not) a direct summand of $Y$. For the theory of Hopf algebras, we refer to \cite{Mon, Swe}.

\section{\bf Quantum traces of morphisms}

In the study of the Green ring $r(H)$ of a Hopf algebra $H$, one of difficult problems is to determine whether or not the trivial module $\kk$ appears in the tensor product $X\otimes Y$ of two indecomposable modules $X$ and $Y$.  This problem has already been solved in the case of group algebras by Benson and Carlson \cite[Theorem 2.1]{BC}, in the case of  involutory Hopf algebras in terms of splitting trace modules \cite{GMS}, and in the case of Hopf algebras with the square of antipode being inner \cite[Theorem 2.4]{Y}. Motivated by these works, in this section we shall make use of the notion of  quantum traces to solve the aforementioned problem  for  any  finite dimension Hopf algebra. In particular,  we will look in the special case $X\otimes X^*$ (or $X^*\otimes X$) for an indecomposable module $X$,  and give various characterizations of  $\kk\mid X\otimes X^*$ or not, which will be used in the next section.

Recall that the Hom-space $\textrm{Hom}_{\kk}(X,Y)$ is an $H$-module defined by $(hf)(x)=\sum h_1f(S(h_2)x),$ for $h\in H, x\in X$ and $f\in\textrm{Hom}_{\kk}(X,Y).$ In the special case where $Y$ is the trivial module $\kk$, then $X^*:=\textrm{Hom}_{\kk}(X,\kk)$ is an $H$-module given by $(hf)(x)=f(S(h)x)$, for $h\in H$, $x\in X$ and $f\in X^*$.
The \emph{evaluation} of $X$ is the morphism $\text{ev}_{X}:X^*\otimes X\rightarrow\kk$ given by $\text{ev}_X(f\otimes x)=f(x).$ The \emph{coevaluation} of $X$ is the morphism $\text{coev}_X:\kk\rightarrow X\otimes X^*$ defined by $\text{coev}_X(1)=\sum_ix_i\otimes x_i^*,$ where $\{x_i\}$ is a basis of $X$ and   $\{x_i^*\}$ is its dual basis in $X^*$.

The \emph{left quantum trace} of $\theta\in\text{Hom}_H(X,X^{**})$ is defined by the composition:
\begin{equation}\label{geq1}\text{Tr}^L_X(\theta):\kk\xrightarrow{\text{coev}_{X}}X\otimes X^*\xrightarrow{\theta\otimes id_{X^*}}X^{**}\otimes X^*\xrightarrow{\text{ev}_{X^*}}\kk.\end{equation} Similarly, the \emph{right quantum trace} of  a morphism $\theta\in\text{Hom}_H(X^{**},X)$ is defined by
\begin{equation}\label{Gequ1}\text{Tr}^R_X(\theta):\kk\xrightarrow{\text{coev}_{X^*}}X^*\otimes X^{**}\xrightarrow{id_{X^*}\otimes \theta}X^*\otimes X\xrightarrow{\text{ev}_X}\kk.\end{equation}
Since $\text{End}_H(\kk)\cong\kk$, both $\text{Tr}^L_X(\theta)$ and $\text{Tr}^R_X(\theta)$ are elements in $\kk$.

\begin{rem}
Applying the duality functor $*$ to (\ref{geq1}) and (\ref{Gequ1}) respectively, one obtains that $\text{Tr}^L_X(\theta)=\text{Tr}^R_{X^*}(\theta^*)$ and $\text{Tr}^R_X(\theta)=\text{Tr}^L_{X^*}(\theta^*)$, see \cite[Proposition 1.37.1]{etingof}.
\end{rem}

\begin{rem}
Let $P$ be a projective $H$-module.
\begin{enumerate}
\item If $H$ is not semisimple, then $\text{Tr}^L_P(\theta)=0$ for any $\theta\in\text{Hom}_H(P,P^{**})$. Otherwise, the morphism $\text{coev}_{P}$ is a split monomorphism by (\ref{geq1}). In this case, $\kk\mid P\otimes P^*$. It follows that $\kk$ is projective, and hence $H$ is semisimple, a contradiction. Similarly, if $H$ is not semisimple, then $\text{Tr}^R_P(\theta)=0$ for any $\theta\in\text{Hom}_H(P^{**},P)$.
\item If $H$ is involutory, i.e., $S^2=id_H$, then the map $\theta: P\rightarrow P^{**}$ given by $\theta(x)(f)=f(x)$ for $x\in P$ and $f\in P^*$ is an $H$-module isomorphism. In this case, Tr$^L_P(\theta)=\text{Tr}^R_P(\theta^{-1})=\dim_{\kk}P$. This implies that an involutory Hopf algebra over a field $\kk$ of characteristic 0 is semisimple (the converse is also true, see \cite{LR}). In case the characteristic of $\kk$ is $p>0$ and $H$ is not semisimple, then $p\mid\dim_{\kk}P$, giving a result of Lorenz \cite[Theorem 2.3 (b)]{Lo}.
\end{enumerate}
\end{rem}

We need  the following  two canonical isomorphisms  later on.
\begin{lem}\label{glem4} \cite[Lemma 2.1.6]{BA} For $X,Y,Z\in H$-mod, we have the following canonical isomorphisms functorial in $X$, $Y$ and $Z$:
\begin{enumerate}
  \item[(1)] $\Phi_{X,Y,Z}:\textrm{Hom}_H(X\otimes Y,Z)\rightarrow\textrm{Hom}_H(X,Z\otimes Y^*), \  \Phi_{X,Y,Z}(\alpha)=(\alpha\otimes id_{Y^*})\circ(id_X\otimes \text{coev}_Y)$.
  \item[(2)] $\Psi_{X,Y,Z}:\textrm{Hom}_H(X,Y\otimes Z)\rightarrow\textrm{Hom}_H(Y^*\otimes X,Z), \ \Psi_{X,Y,Z}(\gamma)=(\text{ev}_Y\otimes id_Z)\circ(id_{Y^*}\otimes\gamma)$.
\end{enumerate}
\end{lem}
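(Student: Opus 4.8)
The plan is to recognize both maps as the familiar tensor--hom adjunctions of the rigid monoidal category $H$-mod, and to prove that each is an isomorphism by exhibiting an explicit two-sided inverse built from $\text{ev}$ and $\text{coev}$. For part (1) I would propose
$$\Phi_{X,Y,Z}^{-1}:\textrm{Hom}_H(X,Z\otimes Y^*)\rightarrow\textrm{Hom}_H(X\otimes Y,Z),\qquad \Phi_{X,Y,Z}^{-1}(\beta)=(id_Z\otimes\text{ev}_Y)\circ(\beta\otimes id_Y),$$
and for part (2)
$$\Psi_{X,Y,Z}^{-1}:\textrm{Hom}_H(Y^*\otimes X,Z)\rightarrow\textrm{Hom}_H(X,Y\otimes Z),\qquad \Psi_{X,Y,Z}^{-1}(\delta)=(id_Y\otimes\delta)\circ(\text{coev}_Y\otimes id_X).$$
A quick type-check confirms that these land in the asserted Hom-spaces, so it remains to verify that they are mutually inverse to $\Phi$ and $\Psi$.

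The key tool is the pair of snake (zig--zag) identities
$$(id_Y\otimes\text{ev}_Y)\circ(\text{coev}_Y\otimes id_Y)=id_Y,\qquad(\text{ev}_Y\otimes id_{Y^*})\circ(id_{Y^*}\otimes\text{coev}_Y)=id_{Y^*},$$
which follow directly from the definitions $\text{ev}_Y(f\otimes y)=f(y)$ and $\text{coev}_Y(1)=\sum_j y_j\otimes y_j^*$ together with the dual-basis expansion $\sum_j f(y_j)y_j^*=f$. Concretely, I would check $\Phi^{-1}(\Phi(\alpha))=\alpha$ by evaluating on $x\otimes y$: inserting $\text{coev}_Y$ produces $\sum_j x\otimes y_j\otimes y_j^*\otimes y$, applying $\alpha$ gives $\sum_j\alpha(x\otimes y_j)\otimes y_j^*\otimes y$, and $\text{ev}_Y$ then collapses the inner $Y^*\otimes Y$ via $y_j^*(y)$, leaving $\sum_j y_j^*(y)\,\alpha(x\otimes y_j)=\alpha(x\otimes y)$. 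The reverse composite $\Phi(\Phi^{-1}(\beta))=\beta$ and both composites for $\Psi$ reduce in exactly the same way to the dual-basis identity, each using one of the two snake relations.

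It remains to note that every map in sight is genuinely $H$-linear and that the isomorphisms are natural. The former is automatic: $\text{ev}_Y$ and $\text{coev}_Y$ are morphisms of $H$-modules (as already used in the definition of the quantum traces), and tensoring an $H$-linear map with an identity morphism again yields an $H$-linear map, so $\Phi(\alpha)$, $\Phi^{-1}(\beta)$, $\Psi(\gamma)$, $\Psi^{-1}(\delta)$ are all composites of $H$-module morphisms. Functoriality in $X$, $Y$, $Z$ follows from the naturality of $\text{ev}$ and $\text{coev}$ and the bifunctoriality of $\otimes$. I do not expect a genuine obstacle here; the only real care needed is bookkeeping --- respecting the paper's specific conventions $\text{ev}_Y\colon Y^*\otimes Y\to\kk$ and $\text{coev}_Y\colon\kk\to Y\otimes Y^*$ and the precise placement of tensor factors --- since a mismatched left/right convention is exactly what would silently break the snake identities. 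One could alternatively quote that $H$-mod is rigid and invoke \cite[Lemma 2.1.6]{BA} wholesale, but the explicit inverses above keep the statement self-contained.
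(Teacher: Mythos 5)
Your proposal is correct and coincides with the paper's treatment: the paper proves nothing itself but cites \cite[Lemma 2.1.6]{BA} and records precisely the inverse maps you construct, $\Phi^{-1}_{X,Y,Z}(\beta)=(id_Z\otimes \text{ev}_Y)\circ(\beta\otimes id_Y)$ and $\Psi^{-1}_{X,Y,Z}(\delta)=(id_Y\otimes\delta)\circ(\text{coev}_Y\otimes id_X)$. Your snake-identity verification simply fills in the routine computation that the paper delegates to the reference, with the correct left-dual conventions $\text{ev}_Y\colon Y^*\otimes Y\to\kk$ and $\text{coev}_Y\colon\kk\to Y\otimes Y^*$.
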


Since the above two morphisms $\Phi_{X,Y,Z}$ and $\Psi_{X,Y,Z}$  will be used often throughout the paper, we give their inverse maps in detail.
$\Phi^{-1}_{X,Y,Z}(\beta)=(id_Z\otimes \text{ev}_Y)\circ(\beta\otimes id_Y)$ for $\beta\in\textrm{Hom}_H(X,Z\otimes Y^*)$, and  $\Psi^{-1}_{X,Y,Z}(\delta)=(id_Y\otimes\delta)\circ(\text{coev}_Y\otimes id_X)$ for $\delta\in\textrm{Hom}_H(Y^*\otimes X,Z)$. The two canonical isomorphisms satisfy the following properties.

\begin{prop}\label{gprop1} Let $X$ be an indecomposable $H$-module and $\theta: X\rightarrow X^{**}$ an $H$-module isomorphism. For any $Y\in H$-mod, we have the following:
\begin{enumerate}
  \item The canonical isomorphism $\text{Hom}_H(Y\otimes X^*,\kk)\xrightarrow{\Phi_{Y,X^*,\kk}}\text{Hom}_H(Y,X^{**})$ preserves split epimorphisms.
  \item The canonical isomorphism $\textrm{Hom}_H(Y,X)\xrightarrow{\Psi_{Y,X,\kk}}\textrm{Hom}_H(X^*\otimes Y,\kk)$
 reflects split epimorphisms.
\end{enumerate}
\end{prop}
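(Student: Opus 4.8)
The plan is to prove both statements by reducing them to a single fact about quantum traces of morphisms between $X$ and $X^{**}$, exploiting the explicit inverse formulas for $\Phi$ and $\Psi$ recorded after Lemma \ref{glem4}. Throughout I would use that $\text{End}_H(X)$ is a finite dimensional local $\kk$-algebra (as $X$ is indecomposable) with nilpotent radical $J$, and that the given isomorphism $\theta$ identifies $\text{Hom}_H(X,X^{**})$ with $\text{End}_H(X)$ via $f\mapsto\theta f$; in particular, a morphism $X\to X^{**}$ is an isomorphism precisely when its image under $\theta^{-1}$ lies outside $J$.

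For (1), start from a split epimorphism $\alpha\colon Y\otimes X^*\to\kk$ with section $s\colon\kk\to Y\otimes X^*$, and set $\beta=\Phi_{Y,X^*,\kk}(\alpha)$. I would transport $s$ through $\Phi_{\kk,X,Y}$, writing $s=(\tilde s\otimes \text{id}_{X^*})\circ\text{coev}_X$ for a unique $\tilde s\colon X\to Y$. Substituting the inverse formula $\alpha=\text{ev}_{X^*}\circ(\beta\otimes \text{id}_{X^*})$ gives $\alpha\circ s=\text{ev}_{X^*}\circ((\beta\tilde s)\otimes \text{id}_{X^*})\circ\text{coev}_X$, which by \eqref{geq1} is exactly $\text{Tr}^L_X(\beta\tilde s)$. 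Hence $\alpha s=\text{id}_{\kk}$ forces $\text{Tr}^L_X(\beta\tilde s)\neq 0$ for $\beta\tilde s\in\text{Hom}_H(X,X^{**})$. Granting the key lemma below, $\beta\tilde s$ is then an isomorphism, and $t:=\tilde s\circ(\beta\tilde s)^{-1}\colon X^{**}\to Y$ satisfies $\beta t=\text{id}_{X^{**}}$, so $\beta$ is split epi. Part (2) is symmetric: from a section $u\colon\kk\to X^*\otimes Y$ of $\delta=\Psi_{Y,X,\kk}(\gamma)$, write $u=(\text{id}_{X^*}\otimes\tilde u)\circ\text{coev}_{X^*}$ with $\tilde u\colon X^{**}\to Y$, and compute from the inverse formula that $\delta\circ u=\text{Tr}^R_X(\gamma\tilde u)$ via \eqref{Gequ1}; nonvanishing of this trace makes $\gamma\tilde u\colon X^{**}\to X$ an isomorphism, whence $v:=\tilde u\circ(\gamma\tilde u)^{-1}$ is a section of $\gamma$.

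The crux, and the main obstacle, is the lemma: \emph{if $X$ is indecomposable and $\psi\colon X\to X^{**}$ has $\text{Tr}^L_X(\psi)\neq 0$, then $\psi$ is an isomorphism} (and likewise for $\text{Tr}^R$ and morphisms $X^{**}\to X$). To prove it I would unwind \eqref{geq1} on a basis $\{x_i\}$ and its dual $\{x_i^*\}$, obtaining $\text{Tr}^L_X(\psi)=\text{tr}_{\kk}(j^{-1}\psi)$, where $j\colon X\to X^{**}$ is the canonical (vector space) double-dual isomorphism. Writing $\psi=\theta f$ with $f=\theta^{-1}\psi\in\text{End}_H(X)$ and $u:=j^{-1}\theta$, this reads $\text{Tr}^L_X(\psi)=\text{tr}_{\kk}(uf)$. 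By Fitting's lemma $f$ is either invertible, in which case $\psi$ is an isomorphism and we are done, or nilpotent, i.e. $f\in J$; it remains to rule out the latter by showing $\text{tr}_{\kk}(uf)=0$. The point is that $u$ satisfies $u(hx)=S^2(h)\,u(x)$ (since $\theta$ is $H$-linear and $j$ intertwines the action on $X$ with the $S^2$-twisted one), and a direct check then shows that $a\mapsto uau^{-1}$ maps $\text{End}_H(X)$ into itself, hence is an algebra automorphism preserving $J$. Consequently the expansion $(fu)^{m}=f\cdot(ufu^{-1})\cdots(u^{m-1}fu^{-(m-1)})\cdot u^{m}$ has its first $m$ factors in $J$, so it lies in $J^{m}u^{m}=0$ once $J^{m}=0$; thus $fu$ (and hence $uf$) is a nilpotent linear operator and $\text{tr}_{\kk}(uf)=0$, contradicting $\text{Tr}^L_X(\psi)\neq 0$. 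This settles the lemma and both parts, the $\text{Tr}^R$ version following by the same computation or by the duality $\text{Tr}^R_X(\theta)=\text{Tr}^L_{X^*}(\theta^*)$ noted in the first remark.
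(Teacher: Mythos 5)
Your argument is correct, and its opening reduction coincides with the paper's: both transport the section $s\colon\kk\to Y\otimes X^*$ through $\Phi_{\kk,X,Y}$ to a map $\tilde s\colon X\to Y$ (the paper's $\gamma$) and observe that the scalar $\alpha\circ s=id_{\kk}$ equals the composite $\text{ev}_{X^*}\circ\bigl((\Phi_{Y,X^*,\kk}(\alpha)\circ\tilde s)\otimes id_{X^*}\bigr)\circ \text{coev}_X$. Where you genuinely diverge is in how this scalar forces $\Phi_{Y,X^*,\kk}(\alpha)\circ\tilde s$ to be invertible. The paper does not use quantum traces here (its Lemma \ref{glem1} only comes afterwards): it argues inside the local ring $\text{End}_H(X^{**})$, noting that if $\Phi_{Y,X^*,\kk}(\alpha)\circ\gamma\circ\theta^{-1}$ lay in $\text{radEnd}_H(X^{**})$, then its tensor with $id_{X^*}$ would lie in $\text{radEnd}_H(X^{**}\otimes X^*)$, making the displayed endomorphism of $\kk$ vanish, contradicting $\alpha\circ\beta=id_{\kk}$; this leans on the standard (but there unproved) fact that radicals of endomorphism rings are stable under $-\otimes id$. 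You instead recognize the scalar as $\text{Tr}^L_X(\beta\tilde s)$ and invoke ``nonzero trace implies isomorphism,'' which is exactly the paper's later Lemma \ref{glem1} (your version assumes the isomorphism $\theta$ exists, which Lemma \ref{glem1} does not; this is harmless since $\theta$ is a hypothesis of the proposition). Moreover, you prove that key lemma by a different method: the paper uses $S^{2n}=id_H$ (via Radford's $S^4$ formula and Nichols--Z\"{o}ller) together with the matrix-power trick $\mathbf{A}^n$, whereas you use the $S^2$-twisted canonical map $j$, check that $u=j^{-1}\theta$ satisfies $u(hx)=S^2(h)u(x)$ and hence that conjugation by $u$ is an automorphism of $\text{End}_H(X)$ preserving its radical $J$, and conclude $(fu)^m\in J^mu^m=0$ for $f\in J$, so $\text{tr}_{\kk}(uf)=0$; your identity $\text{Tr}^L_X(\psi)=\text{tr}_{\kk}(j^{-1}\psi)$ also checks out. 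What each approach buys: the paper's route needs no trace lemma at all but quietly uses the radical-tensor fact; yours is self-contained, avoids Radford/Nichols--Z\"{o}ller entirely, and your explicit section $t=\tilde s\circ(\beta\tilde s)^{-1}$ makes the split epimorphism concrete where the paper cites locality. Part (2), which the paper dismisses as ``similar,'' you handle correctly and symmetrically via $\text{Tr}^R$ and the duality $\text{Tr}^R_X(\theta)=\text{Tr}^L_{X^*}(\theta^*)$.
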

\proof (1) If the map $\alpha\in\text{Hom}_H(Y\otimes X^*,\kk)$ is a split epimorphism, there is some $\beta\in\text{Hom}_H(\kk,Y\otimes X^*)$ such that $\alpha\circ\beta=id_{\kk}.$ For the map $\beta$, there is some $\gamma\in\text{Hom}_H(X,Y)$ such that $\beta=\Phi_{\kk,X,Y}(\gamma)$. Note that the composition $\Phi_{Y,X^*,\kk}(\alpha)\circ\gamma\circ\theta^{-1}\in\text{End}_H(X^{**})$. If $\Phi_{Y,X^*,\kk}(\alpha)\circ\gamma\circ\theta^{-1}\in\text{radEnd}_H(X^{**})$, then $(\Phi_{Y,X^*,\kk}(\alpha)\circ\gamma\circ\theta^{-1})\otimes id_{X^*}\in\text{radEnd}_H(X^{**}\otimes X^*)$. Hence the endomorphism of $\kk$:
$\text{ev}_{X^*}\circ((\Phi_{Y,X^*,\kk}(\alpha)\circ\gamma\circ\theta^{-1}\circ\theta)\otimes id_{X^*})\circ \text{coev}_X,$
factoring through $(\Phi_{Y,X^*,\kk}(\alpha)\circ\gamma\circ\theta^{-1})\otimes id_{X^*}$ , is zero. However,
\begin{linenomath}
\begin{align*}&\ \ \ \text{ev}_{X^*}\circ((\Phi_{Y,X^*,\kk}(\alpha)\circ\gamma\circ\theta^{-1}\circ\theta)\otimes id_{X^*})\circ \text{coev}_X\\
&=\Phi^{-1}_{Y,X^*,\kk}(\Phi_{Y,X^*,\kk}(\alpha))\circ\Phi_{\kk,X,Y}(\gamma)\\
&=\alpha\circ\beta\\
&=id_{\kk},
\end{align*}
\end{linenomath}
a contradiction. This implies that $\Phi_{Y,X^*,\kk}(\alpha)\circ\gamma\circ\theta^{-1}$ is an automorphism of $X^{**}$ since $\text{End}_H(X^{**})$ is local. Thus, the map $\Phi_{Y,X^*,\kk}(\alpha)$ is a split epimorphism.

(2)  The proof is similar. \qed

As an immediate consequence of Proposition \ref{gprop1}, we have the following.
\begin{cor}\label{gprop2}Let $X$ and $Y$ be two indecomposable $H$-modules and  assume $X\cong X^{**}$.
\begin{enumerate}
  \item[(1)] If $\kk\mid Y\otimes X^*$, then $Y\cong X^{**}$.
  \item[(2)] If $\kk\mid X^*\otimes Y$, then $X\cong Y$.
\end{enumerate}
\end{cor}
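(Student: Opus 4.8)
The plan is to derive both parts of Corollary~\ref{gprop2} directly from the structural content of Proposition~\ref{gprop1}, namely that $\Phi_{Y,X^*,\kk}$ preserves split epimorphisms and $\Psi_{Y,X,\kk}$ reflects them. The key observation is that for two modules $A$ and $B$, the statement $\kk \mid A\otimes B$ is equivalent to the existence of a split epimorphism $A\otimes B \twoheadrightarrow \kk$ (equivalently a split monomorphism $\kk \hookrightarrow A\otimes B$), since $\kk$ is the trivial simple module and appears as a direct summand exactly when the structural map splits. So both parts reduce to tracking split epimorphisms through the canonical isomorphisms of Lemma~\ref{glem4}, applied with the hypothesis $X\cong X^{**}$.

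\emph{Proof of (1).} Assume $\kk \mid Y\otimes X^*$, so there is a split epimorphism $\alpha\colon Y\otimes X^* \twoheadrightarrow \kk$. By Proposition~\ref{gprop1}(1) the image $\Phi_{Y,X^*,\kk}(\alpha)\colon Y \to X^{**}$ is again a split epimorphism. Since $Y$ and $X^{**}$ are indecomposable (the latter because $X$ is, and duality preserves indecomposability), a split epimorphism between indecomposables must be an isomorphism: a nonzero direct summand decomposition of an indecomposable is trivial, so the complementary summand of the split surjection vanishes and $\Phi_{Y,X^*,\kk}(\alpha)$ is an isomorphism $Y\cong X^{**}$. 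This gives exactly the conclusion of (1); note that the hypothesis $X\cong X^{**}$ is what guarantees $X^{**}$ is indecomposable and lets Proposition~\ref{gprop1} apply with the fixed isomorphism $\theta$.

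\emph{Proof of (2).} Assume $\kk \mid X^*\otimes Y$, so there is a split epimorphism $X^*\otimes Y \twoheadrightarrow \kk$. I want to realize this as $\Psi_{Y,X,\kk}(\delta)$ for some $\delta\in\text{Hom}_H(Y,X)$. Since $\Psi_{Y,X,\kk}$ is an isomorphism, such a $\delta$ exists and is unique; because $\Psi_{Y,X,\kk}$ \emph{reflects} split epimorphisms by Proposition~\ref{gprop1}(2), the preimage $\delta\colon Y \to X$ is itself a split epimorphism. Again both $Y$ and $X$ are indecomposable, so the split epimorphism $\delta$ is an isomorphism, yielding $X\cong Y$. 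Here the direction of the arrow is the point: part (2) lands us in $\text{Hom}_H(Y,X)$, and indecomposability closes the argument identically to part (1).

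The main thing to get right is the equivalence ``$\kk\mid A\otimes B$ iff the canonical map splits,'' together with the bookkeeping needed to present the given $\alpha$ (resp.\ the target map) in the precise domain required by the relevant half of Proposition~\ref{gprop1}. The indecomposability-forces-isomorphism step is routine once that matching is in place, so the only genuine care required is in identifying, for each part, which of the two canonical isomorphisms transports the split epimorphism in the correct direction; I do not expect any substantive obstacle beyond this, since all the real work is already carried in Proposition~\ref{gprop1}.
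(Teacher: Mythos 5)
Your proof is correct and follows exactly the route the paper intends: the paper states Corollary~\ref{gprop2} as an immediate consequence of Proposition~\ref{gprop1}, and your argument---transporting the split epimorphism $\kk\mid Y\otimes X^*$ (resp.\ $\kk\mid X^*\otimes Y$) through $\Phi_{Y,X^*,\kk}$ (resp.\ pulling it back through $\Psi_{Y,X,\kk}$), then using that a split epimorphism out of an indecomposable module onto a nonzero module is an isomorphism---is precisely that consequence spelled out. No gaps; the bookkeeping of which canonical isomorphism applies in each part, and the role of the hypothesis $X\cong X^{**}$ in making Proposition~\ref{gprop1} applicable, are all handled correctly.
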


To pursue Corollary \ref{gprop2} even further, we need some preparations.
For any integer $m>0$, the $m$-th power of the duality functor $*$ on $X$ is denoted $X^{*m}$. If $\{x_i\}$ is a basis of $X$, we denote by $\{x_i^{*m}\}$ the basis of $X^{*m}$  dual to the basis $\{x_i^{*m-1}\}$ of $X^{*m-1}$, i.e., $\langle x_i^{*m},x_j^{*m-1}\rangle=\delta_{i,j}$. With these notations, we have the following.

\begin{lem}\label{glem1}Let $X$ be an indecomposable $H$-module.
\begin{enumerate}
  \item For any $\theta\in\text{Hom}_H(X,X^{**})$, if $\text{Tr}^L_X(\theta)\neq0$,  then $\theta$ is an isomorphism.
  \item For any $\theta\in\text{Hom}_H(X^{**},X)$, if $\text{Tr}^R_X(\theta)\neq0$,  then $\theta$ is an isomorphism.
\end{enumerate}
\end{lem}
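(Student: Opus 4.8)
The plan is to prove both statements by exploiting the locality of $\mathrm{End}_H(X)$ together with the naturality of the quantum trace, reducing the non-vanishing of the trace to the invertibility of $\theta$. The two parts are dual to one another under the isomorphisms of Lemma \ref{glem4}, so I would concentrate on part (1) and then indicate that part (2) follows by an entirely analogous argument (or by applying the duality functor and the relation $\mathrm{Tr}^L_X(\theta)=\mathrm{Tr}^R_{X^*}(\theta^*)$ from the first Remark).

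For part (1), the key structural fact is that $X$ indecomposable forces $\mathrm{End}_H(X)$ to be a local ring, so every endomorphism of $X$ is either an isomorphism or lies in the radical (nilpotent ideal) of $\mathrm{End}_H(X)$. Since $X\cong X^{**}$ is not automatic, I would first argue that $\theta\colon X\to X^{**}$ is an isomorphism \emph{iff} it does not factor through the radical, by composing with a fixed isomorphism to compare with $\mathrm{End}_H(X)$ or $\mathrm{End}_H(X^{**})$; both endomorphism rings are local since $X$ and $X^{**}$ are indecomposable. The plan is then to assume, for contradiction, that $\theta$ is \emph{not} an isomorphism, which places the relevant composite in the radical of a local endomorphism ring.

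The heart of the argument is the same trick already deployed in the proof of Proposition \ref{gprop1}: if an endomorphism sits in $\mathrm{radEnd}_H$, then tensoring with $\mathrm{id}_{X^*}$ keeps it in $\mathrm{radEnd}_H(X^{**}\otimes X^*)$, and any endomorphism of the trivial module $\kk$ that factors through such a radical morphism must be zero (because $\mathrm{End}_H(\kk)\cong\kk$ is a field, so it contains no nonzero nilpotents, while a map factoring through a radical endomorphism of $X^{**}\otimes X^*$ cannot be invertible). But $\mathrm{Tr}^L_X(\theta)$ is by definition \eqref{geq1} exactly such a composite $\mathrm{ev}_{X^*}\circ(\theta\otimes\mathrm{id}_{X^*})\circ\mathrm{coev}_X$ factoring through $\theta\otimes\mathrm{id}_{X^*}$. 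Hence if $\theta$ fails to be an isomorphism, the trace is forced to vanish, contradicting the hypothesis $\mathrm{Tr}^L_X(\theta)\neq 0$. This yields $\theta$ invertible.

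The main obstacle I anticipate is the bookkeeping needed to pass from ``$\theta$ is not an isomorphism'' to ``the relevant composite lies in $\mathrm{radEnd}_H$ of an indecomposable object.'' Because $\theta$ maps between the two different (though isomorphic) modules $X$ and $X^{**}$, one cannot speak of $\theta$ itself lying in a radical without first fixing an identification; the clean route is to observe that $\theta\otimes\mathrm{id}_{X^*}$ factors $\mathrm{Tr}^L_X(\theta)$ and that $\mathrm{Tr}^L_X(\theta)\neq 0$ means this factoring map is a split monomorphism onto the $\kk$-summand determined by $\mathrm{coev}_X$, which cannot happen through a radical morphism. Getting the direction of the implication and the exact placement of $\theta\otimes\mathrm{id}_{X^*}$ versus $\theta$ correct is the delicate point, but it is purely a matter of tracking the compositions in \eqref{geq1}; no genuinely new idea beyond the locality-of-endomorphisms principle is required.
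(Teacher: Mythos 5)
Your proposal has a genuine gap, and in fact two related ones. First, the reduction to a local endomorphism ring is circular: you plan to ``compose with a fixed isomorphism'' $X\cong X^{**}$, but no such isomorphism is available a priori, and producing one is precisely part of the content of the lemma (this is how it is used in Theorem \ref{thm2.7}, where the nonvanishing trace is what forces $Y\cong X^{**}$). When $X\not\cong X^{**}$ every $\theta\in\mathrm{Hom}_H(X,X^{**})$ is a non-isomorphism, and the lemma then asserts that \emph{every} left quantum trace vanishes; your locality argument has no endomorphism ring to run in, so you would have to work with the radical of the category $\mathrm{rad}(X,X^{**})$ instead. Note also that the trick you cite from Proposition \ref{gprop1} is deployed there under the standing hypothesis that an isomorphism $\theta:X\to X^{**}$ exists, so borrowing it does not remove the circularity.

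Second, and more seriously, the pivotal step ``a radical morphism stays radical after tensoring with $id_{X^*}$'' is false in general, so the punchline (no nonzero idempotent factors through a radical morphism) cannot be reached. A concrete counterexample: let $H=\kk C_2$ in characteristic $2$ and $P=\kk C_2$, and let $\psi\in\mathrm{radEnd}_H(P)$ be multiplication by $1+g$. Then $\psi\otimes id_P\notin\mathrm{radEnd}_H(P\otimes P)$: the image of $\psi\otimes id_P$ is spanned by $1\otimes 1+g\otimes 1$ and $1\otimes g+g\otimes g$, while $\mathrm{rad}(P\otimes P)$ (under the diagonal action) is spanned by $1\otimes 1+g\otimes g$ and $1\otimes g+g\otimes 1$, so the image is not contained in the radical. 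Likewise the radical inclusion $\kk\to P$ becomes a split monomorphism after tensoring with $P$. This is why the paper's proof of Lemma \ref{glem1} takes an entirely different route, which is the idea missing from your proposal: by Radford's formula for $S^4$ and the Nichols--Z\"oller theorem, $S^{2n}=id_H$ for some $n$, giving a canonical $H$-isomorphism $X^{*2n}\cong X$ whose matrix in the iterated dual bases is the identity. One then forms $\Theta=\mathrm{Id}\circ\theta^{*(2n-2)}\circ\cdots\circ\theta^{**}\circ\theta\in\mathrm{End}_H(X)$, whose matrix is $\mathbf{A}^n$ where $\mathbf{A}$ is the matrix of $\theta$ and $\mathrm{Tr}^L_X(\theta)=\mathrm{tr}(\mathbf{A})$. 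Locality of $\mathrm{End}_H(X)$ forces $\Theta$ to be nilpotent or invertible; nilpotency would make $\mathbf{A}$ nilpotent and hence $\mathrm{tr}(\mathbf{A})=0$, contradicting the hypothesis, so $\Theta$ and therefore $\theta$ are isomorphisms. This argument manufactures the needed endomorphism of $X$ itself (sidestepping both gaps) and produces the isomorphism $X\cong X^{**}$ as output rather than assuming it as input. Your reduction of part (2) to part (1) via $\mathrm{Tr}^R_X(\theta)=\mathrm{Tr}^L_{X^*}(\theta^*)$ is fine.
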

\proof We only prove Part (1) and the proof of Part (2) is similar.
Denote by $\mathbf{A}$ the transformation matrix of $\theta\in\text{Hom}_H(X,X^{**})$ with respect to the bases $\{x_i\}$ and $\{x^{**}_i\}$. The left quantum trace of $\theta$ is $\text{Tr}^L_X(\theta)=\text{tr}(\mathbf{A})$, the usual trace of the matrix $\mathbf{A}$. Since $H$ is of finite dimension, the order of $S^2$ is finite by Radford's formula on $S^4$ and the Nichols-Z$\ddot{o}$ller Theorem. Suppose that $S^{2n}=id_H$. Then the map
\begin{linenomath}
$$\text{Id}:X^{*2n}\rightarrow X,\  \sum_i\lambda_ix_i^{*2n}\mapsto\sum_i\lambda_ix_i$$
\end{linenomath}
is an $H$-module isomorphism and the transformation matrix of the map $\text{Id}$ with respect to the basis $\{x_i^{*2n}\}$ of $X^{*2n}$ and the basis $\{x_i\}$ of $X$ is the identity matrix.
Consider the following composition map:
\begin{linenomath}
$$\Theta:X\xrightarrow{\theta}X^{**}\xrightarrow{\theta^{**}}X^{****}\rightarrow\cdots\rightarrow X^{*2n-2}\xrightarrow{\theta^{*2n-2}}X^{*2n}\xrightarrow{\text{Id}}X.$$
\end{linenomath}
Note that the matrix of the map $\Theta$ from $X$ to itself with respect to the basis $\{x_i\}$ of $X$ is $\mathbf{A}^n$. Since $\text{End}_H(X)$ is local, the map $\Theta$ is either nilpotent or isomorphic. If $\Theta$ is nilpotent, so is $\mathbf{A}^n$, and hence $\mathbf{A}$ is nilpotent. This implies that $\text{Tr}^L_X(\theta)=\text{tr}(\mathbf{A})=0$, a contradiction. Thus, $\Theta$ is an isomorphism, and so is the map $\theta$.
\qed

Cibils in \cite[Remark 5.8]{Ci}  raised the following question: when is the trivial module a direct summand of the tensor product of two indecomposable modules over a finite dimensional Hopf algebra with antipode of order bigger than 2?  We are now  ready to answer this question using  quantum traces.

\begin{thm}\label{thm2.7}
Let $X$ and $Y$ be two indecomposable $H$-modules.
\begin{enumerate}
  \item $\kk\mid Y\otimes X^*$ if and only if there are isomorphisms $f:X\rightarrow Y$ and $g:Y\rightarrow X^{**}$ such that
$\text{Tr}^L_X(g\circ f)\neq0$.
  \item $\kk\mid X^*\otimes Y$ if and only if there are isomorphisms $f:X^{**}\rightarrow Y$ and $g:Y\rightarrow X$ such that
$\text{Tr}^R_X(g\circ f)\neq0$.
\end{enumerate}
\end{thm}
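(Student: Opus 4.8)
The plan is to prove the two statements in parallel, using the canonical isomorphisms of Lemma~\ref{glem4} and Proposition~\ref{gprop1} to convert the condition $\kk\mid Y\otimes X^*$ into the existence of a suitable split epimorphism, and then to extract the required quantum-trace data from Lemma~\ref{glem1}. I will concentrate on Part~(1); Part~(2) is the mirror image obtained by interchanging the roles of $\Phi$ and $\Psi$ and replacing left traces with right traces.

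For the forward direction of Part~(1), suppose $\kk\mid Y\otimes X^*$. Since $Y\otimes X^*$ contains $\kk$ as a summand and the form $\textrm{Hom}_H(-,\kk)$ detects this, there is a split epimorphism $\alpha\colon Y\otimes X^*\to\kk$. Applying the canonical isomorphism $\Phi_{Y,X^*,\kk}$ from Proposition~\ref{gprop1}(1), which preserves split epimorphisms, I obtain a split epimorphism $\Phi_{Y,X^*,\kk}(\alpha)\colon Y\to X^{**}$. Because $Y$ and $X^{**}$ are both indecomposable, a split epimorphism between them must be an isomorphism; call it $g\colon Y\to X^{**}$. This already forces $Y\cong X^{**}$, in the spirit of Corollary~\ref{gprop2}(1). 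To produce the second isomorphism $f\colon X\to Y$ and to control the trace, I will trace through the explicit formula for $\Phi_{Y,X^*,\kk}$ and its inverse: unwinding $\alpha\circ\beta=id_\kk$ for a splitting $\beta$, and writing $\beta=\Phi_{\kk,X,Y}(\gamma)$ for some $\gamma\in\textrm{Hom}_H(X,Y)$ (as in the proof of Proposition~\ref{gprop1}), the identity
\begin{equation*}
\text{ev}_{X^*}\circ\bigl((g\circ\gamma)\otimes id_{X^*}\bigr)\circ\text{coev}_X=id_\kk
\end{equation*}
says precisely that $\text{Tr}^L_X(g\circ\gamma)=1\neq0$. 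Since $\text{Tr}^L_X(g\circ\gamma)\neq0$, Lemma~\ref{glem1}(1) guarantees that $g\circ\gamma$ is an isomorphism, and as $g$ is already an isomorphism, so is $\gamma$; I then set $f:=\gamma\colon X\to Y$, and the pair $(f,g)$ has the required property $\text{Tr}^L_X(g\circ f)\neq0$.

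For the converse, suppose isomorphisms $f\colon X\to Y$ and $g\colon Y\to X^{**}$ are given with $\text{Tr}^L_X(g\circ f)\neq0$. Unravelling the definition \eqref{geq1}, the scalar $\text{Tr}^L_X(g\circ f)$ is the value of the composite $\text{ev}_{X^*}\circ\bigl((g\circ f)\otimes id_{X^*}\bigr)\circ\text{coev}_X$, and its non-vanishing exhibits a morphism $\kk\to Y\otimes X^*$ (namely $(f\otimes id_{X^*})\circ\text{coev}_X$, after identifying via $f,g$) together with a morphism $Y\otimes X^*\to\kk$ whose composite is a nonzero scalar, hence invertible. Rescaling splits off a copy of $\kk$ from $Y\otimes X^*$, giving $\kk\mid Y\otimes X^*$.

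The main obstacle I anticipate is the bookkeeping in the forward direction: correctly matching the two applications of the canonical isomorphisms ($\Phi_{Y,X^*,\kk}$ applied to $\alpha$ and $\Phi_{\kk,X,Y}$ applied to $\gamma$) so that the composite $\Phi^{-1}_{Y,X^*,\kk}(\Phi_{Y,X^*,\kk}(\alpha))\circ\Phi_{\kk,X,Y}(\gamma)=\alpha\circ\beta$ telescopes to $id_\kk$, exactly as in the computation already carried out in the proof of Proposition~\ref{gprop1}. Once that naturality calculation is in hand, identifying the resulting scalar with $\text{Tr}^L_X(g\circ f)$ and invoking Lemma~\ref{glem1} to upgrade $\gamma$ to an isomorphism is routine. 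Everything here is formal and functorial, so no representation-theoretic input beyond the locality of $\text{End}_H(X)$ (used via Lemma~\ref{glem1}) and the indecomposability of $X$ and $Y$ is required.
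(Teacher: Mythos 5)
Your proposal follows the same route as the paper's own proof: encode the splitting of $\kk\mid Y\otimes X^*$ through the canonical isomorphisms $\Phi_{\kk,X,Y}$ and $\Phi_{Y,X^*,\kk}$ of Lemma \ref{glem4}, identify the composite $id_{\kk}$ with the scalar $\text{Tr}^L_X(g\circ\gamma)$, and invoke Lemma \ref{glem1} to upgrade $g\circ\gamma$ to an isomorphism; your converse direction is verbatim the paper's. However, there is one circular step in your forward direction. You open by applying Proposition \ref{gprop1}(1) to conclude that $g:=\Phi_{Y,X^*,\kk}(\alpha):Y\rightarrow X^{**}$ is a split epimorphism, hence an isomorphism. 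But Proposition \ref{gprop1} is stated under the standing hypothesis that an $H$-module isomorphism $\theta:X\rightarrow X^{**}$ exists, and at this point of the argument you do not know $X\cong X^{**}$ --- indeed $X\cong X^{**}$ (via $X\cong Y\cong X^{**}$) is part of what Theorem \ref{thm2.7} is asserting, and its proof in Proposition \ref{gprop1} uses $\theta$ essentially. So you cannot obtain that $g$ is an isomorphism this way; the same circularity would affect your appeal to Corollary \ref{gprop2}, which carries the same hypothesis.

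Fortunately the flawed step is redundant, because your later steps supply everything needed without it. Once the naturality computation gives $\text{ev}_{X^*}\circ\bigl((g\circ\gamma)\otimes id_{X^*}\bigr)\circ\text{coev}_X=id_{\kk}$, i.e.\ $\text{Tr}^L_X(g\circ\gamma)=1\neq0$, Lemma \ref{glem1}(1) shows that $g\circ\gamma:X\rightarrow X^{**}$ is an isomorphism. From this alone, $\gamma$ is a split monomorphism (with left inverse $(g\circ\gamma)^{-1}\circ g$) into the indecomposable module $Y$, hence an isomorphism, and then $g=(g\circ\gamma)\circ\gamma^{-1}$ is an isomorphism as well --- this indecomposability argument is exactly how the paper concludes, setting $f:=\gamma$. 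With the premature appeal to Proposition \ref{gprop1} deleted and this one-line argument inserted, your proof coincides with the paper's.
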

\proof We only prove Part (1) and the same argument applies to Part (2). If $f:X\rightarrow Y$ and $g:Y\rightarrow X^{**}$ are two isomorphisms such that $\text{Tr}^L_X(g\circ f)\neq0$, then
\begin{linenomath}
$$0\neq\text{Tr}^L_X(g\circ f)=\text{ev}_{X^*}\circ(g\otimes id_{X^*})\circ(f\otimes id_{X^*})\circ \text{coev}_X.$$
\end{linenomath}
This implies that the map $(f\otimes id_{X^*})\circ \text{coev}_X:\kk\rightarrow Y\otimes X^*$ is a split monomorphism, and hence $\kk\mid Y\otimes X^*$. Conversely, if $\kk\mid Y\otimes X^*$, there are maps $\alpha: \kk\rightarrow Y\otimes X^*$ and $\beta:Y\otimes X^*\rightarrow\kk$ such that $\beta\circ\alpha=id_{\kk}$. For the map $\alpha$, by Lemma \ref{glem4}, there is a map $f:X\rightarrow Y$ such that
\begin{linenomath}
$$\alpha=\Phi_{\kk,X,Y}(f)=(f\otimes id_{X^*})\circ(id_{\kk}\otimes \text{coev}_X).$$
\end{linenomath}
For the map $\beta$, there is a map $g:Y\rightarrow X^{**}$ such that
\begin{linenomath}
$$\beta=\Phi^{-1}_{Y,X^*,\kk}(g)=(id_{\kk}\otimes \text{ev}_{X^*})\circ(g\otimes id_{X^*}).$$
\end{linenomath} Thus, we have:
\begin{linenomath}
\begin{align*}
\text{Tr}^L_X(g\circ f)&=\text{ev}_{X^*}\circ(g\otimes id_{X^*})\circ(f\otimes id_{X^*})\circ \text{coev}_X\\
&=(id_{\kk}\otimes \text{ev}_{X^*})\circ(g\otimes id_{X^*})\circ(f\otimes id_{X^*})\circ(id_{\kk}\otimes \text{coev}_X)\\
&=\beta\circ\alpha\\
&=id_{\kk}.
\end{align*}
\end{linenomath}
The composition $g\circ f$ is an isomorphism following from Lemma \ref{glem1}. Thus, $f$ and $g$ are both isomorphisms.\qed

Given two objects  $X,Y\in H$-mod, one knows little in general  about how to decompose the tensor product $X\otimes Y$ into a direct sum of indecomposable modules. However, there are still some rules that the decomposition should follow as shown in the following.

\begin{prop}\label{gg13}
Let $X,Y,M\in H$-mod with $X$ and $M$ being indecomposable.
\begin{enumerate}
  \item[(1)] If $\kk\mid M\otimes M^*$ and $M\mid X\otimes Y$, then $\kk\mid X\otimes X^*$ and $X\mid M\otimes Y^*$.
  \item[(2)] If $\kk\mid M^*\otimes M$ and $M\mid Y\otimes X$, then $\kk\mid X^*\otimes X$ and $X\mid Y^*\otimes M$.
\end{enumerate}
\end{prop}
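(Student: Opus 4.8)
The plan is to prove Part (1) and obtain Part (2) by an entirely analogous argument (replacing left duals/evaluations by right ones and swapping the order of tensor factors). The strategy is to transport the two hypotheses into a single ambient Hom-space using the canonical adjunction isomorphisms of Lemma \ref{glem4}, and then exploit the characterization of $\kk\mid M\otimes M^*$ furnished by Theorem \ref{thm2.7}. First I would unpack the hypothesis $\kk\mid M\otimes M^*$: by Theorem \ref{thm2.7}(1) applied with $Y=M$, there exist isomorphisms $p:M\to M$ and $q:M\to M^{**}$ with $\text{Tr}^L_M(q\circ p)\neq 0$, so in particular $M\cong M^{**}$ and the coevaluation-type map $\text{coev}_M$ (up to composing with isomorphisms) is split. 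The second hypothesis $M\mid X\otimes Y$ gives a split pair $u:M\to X\otimes Y$ and $v:X\otimes Y\to M$ with $v\circ u=\text{id}_M$.

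The key step is to combine these two splittings. From $\kk\mid M\otimes M^*$ we have a split monomorphism $\kk\to M\otimes M^*$ and a split epimorphism $M\otimes M^*\to\kk$. Tensoring the splitting of $M$ inside $X\otimes Y$ on the right by $M^*$ (respectively precomposing with the split map into $M\otimes M^*$), I can build an explicit split pair exhibiting $\kk$ as a summand of $(X\otimes Y)\otimes M^*$. Next I would apply the adjunction $\Phi$ from Lemma \ref{glem4} to reassociate: the identity $\kk\mid (X\otimes Y)\otimes M^*$ is equivalent, via $\Phi_{X,\,Y\otimes M^*,\,\kk}$ or by iterating associativity of $\otimes$, to a statement about $\text{Hom}_H(X, (Y\otimes M^*)^* )$ and ultimately about whether $X^*$ appears in $Y\otimes M^*$, i.e.\ $X\mid M\otimes Y^*$ after applying duals. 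Since $M\cong M^{**}$, taking duals is harmless and the bookkeeping of stars closes up. This is precisely the second conclusion $X\mid M\otimes Y^*$.

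For the first conclusion $\kk\mid X\otimes X^*$, the plan is to feed $X\mid M\otimes Y^*$ back through the same machinery. Once $X\mid M\otimes Y^*$ is established, I would tensor the corresponding split pair with $X^*$ and use $M\mid X\otimes Y$ together with $\kk\mid M\otimes M^*$ to assemble a composite $\kk\to X\otimes X^*\to\kk$ equal to the identity; concretely, one composes the split maps $\kk\to M\otimes M^*$, the image of $u,v$ under the adjunction isomorphisms, and the evaluation/coevaluation of $X$, and checks the composite is $\text{id}_\kk$ by the functoriality clauses of Lemma \ref{glem4}. The main obstacle I anticipate is the careful tracking of the duality functors and the reassociation isomorphisms: one must verify that the splitting maps genuinely compose to the identity after all the $\Phi$/$\Psi$ transports, rather than merely to some endomorphism of $\kk$, and that the indecomposability of $X$ and $M$ (so that $\text{End}_H$ is local) is invoked at exactly the point where a nonzero trace or a split idempotent forces an isomorphism, as in the proofs of Proposition \ref{gprop1} and Lemma \ref{glem1}. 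The symmetry between the two conclusions should make the second half essentially a relabeling once the first transport is done cleanly.
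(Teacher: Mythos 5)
Your plan is correct and takes essentially the same route as the paper: both arguments combine the hypotheses into $\kk\mid X\otimes Y\otimes M^*$, then apply Theorem \ref{thm2.7}(1) (via the adjunctions of Lemma \ref{glem4}) to an indecomposable summand of $Y\otimes M^*$ written as $N_i^*$ for some indecomposable $N_i$, obtaining $X\cong N_i\cong N_i^{**}$, whence $\kk\mid X\otimes X^*$ is immediate and $X\mid (Y\otimes M^*)^*\cong M\otimes Y^*$ follows by a single dualization using $M\cong M^{**}$. The one refinement to note is that the isomorphism $X\cong X^{**}$ needed to make ``the stars close up'' is supplied by the two isomorphisms in Theorem \ref{thm2.7} itself (not by $M\cong M^{**}$ alone), and with it both conclusions fall out of one application of that theorem, so your second pass constructing an explicit composite $\kk\rightarrow X\otimes X^*\rightarrow\kk$ is unnecessary.
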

\proof (1) We only prove Part (1) and the proof of Part (2) is similar.  The conditions $\kk\mid M\otimes M^*$ and $M\mid X\otimes Y$ imply that $\kk\mid X\otimes Y\otimes M^*$. Suppose $Y\otimes M^*\cong\bigoplus_iN_i^*$ for some indecomposable modules $N_i$. Then there is an indecomposable module $N_i$ such that $\kk\mid X\otimes N_i^*$. By Theorem \ref{thm2.7} (1), we obtain $X\cong N_i\cong N_i^{**}$. It follows that $\kk\mid X\otimes N_i^*\cong X\otimes X^*$. Note that $\kk\mid M\otimes M^*$ implies that $M\cong M^{**}$. Then $X\cong N_i^{**}$ implies that $X\mid (Y\otimes M^*)^*\cong M\otimes Y^*$, as desired.
\qed

In the rest of this section, $H$ will be a non-semisimple Hopf algebra. We shall take another approach to characterize when the trivial module $\kk$ appears in the decomposition of  the tensor product $X^*\otimes X$ (resp. $X\otimes X^*$) for an indecomposable module $X$. For the special case where the square of the antipode is inner, we refer to \cite{GMS, Y}. Suppose
\begin{equation}\label{equ2}0\rightarrow\tau(\kk)\rightarrow E\xrightarrow{\sigma}\kk\rightarrow0\end{equation}
is an almost split sequence ending at the trivial module $\kk$. Tensoring (over $\kk$) the sequence (\ref{equ2})
with an indecomposable module $X$, we obtain the following two short exact sequences:
\begin{equation}\label{equ3}0\rightarrow\tau(\kk)\otimes X\rightarrow E\otimes X\xrightarrow{\sigma\otimes id_X}X\rightarrow0,\end{equation}
\begin{equation}\label{equ4}0\rightarrow X\otimes\tau(\kk)\rightarrow X\otimes E\xrightarrow{id_X\otimes\sigma}X\rightarrow0.\end{equation}

We need the following lemma, its proof is straightforward if one applies Lemma  \ref{glem4}.
\begin{lem} For $X,Y\in H$-mod, the following diagrams are commutative:
\begin{equation}\label{gdiag2}
\xymatrix{
 \textrm{Hom}_H(Y,X\otimes E)\ar[d]_{\Psi_{Y,X,E}}\ar[r]^{(id_X\otimes\sigma)_*}&\textrm{Hom}_H(Y,X)\ar[d]^{\Psi_{Y,X,\kk}}&\\
 \textrm{Hom}_H(X^*\otimes Y,E) \ar[r]^{\sigma_*} &\textrm{Hom}_H(X^*\otimes Y,\kk),&}
\end{equation}
\quad
 \begin{equation}\label{gdiag22}
\xymatrix{
 \textrm{Hom}_H(Y\otimes X,E)\ar[d]_{\Phi_{Y,X,E}} \ar[r]^{\sigma_*}&\textrm{Hom}_H(Y\otimes X,\kk)\ar[d]^{\Phi_{Y,X,\kk}}& \\
 \textrm{Hom}_H(Y,E\otimes X^{*}) \ar[r]^{(\sigma\otimes id_{X^{*}})_*}&\textrm{Hom}_H(Y,X^{*}).&}
\end{equation}
\end{lem}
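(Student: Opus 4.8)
The plan is to prove both squares by a direct diagram chase, expanding the defining formulas for $\Phi$ and $\Psi$ recorded right after Lemma~\ref{glem4} and invoking nothing beyond the bifunctoriality of $\otimes$ together with the coherence identifications $X\otimes\kk\cong X$ and $\kk\otimes E\cong E$. In each case I would fix an arbitrary morphism in the top-left corner and compute its images along the two boundary paths, then check that they agree.

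For the square (\ref{gdiag22}) I would fix $\alpha\in\textrm{Hom}_H(Y\otimes X,E)$ and run it both ways. Going right and then down gives $\Phi_{Y,X,\kk}(\sigma\circ\alpha)=((\sigma\circ\alpha)\otimes id_{X^*})\circ(id_Y\otimes\text{coev}_X)$, while going down and then right gives $(\sigma\otimes id_{X^*})\circ\Phi_{Y,X,E}(\alpha)=(\sigma\otimes id_{X^*})\circ(\alpha\otimes id_{X^*})\circ(id_Y\otimes\text{coev}_X)$. These two expressions coincide at once, since functoriality of the tensor product yields $(\sigma\otimes id_{X^*})\circ(\alpha\otimes id_{X^*})=(\sigma\circ\alpha)\otimes id_{X^*}$. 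Thus this square commutes with no further work.

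The square (\ref{gdiag2}) requires slightly more care, because the evaluation and $\sigma$ now sit on adjacent tensor slots. Fixing $\gamma\in\textrm{Hom}_H(Y,X\otimes E)$, the clockwise path produces $(\text{ev}_X\otimes id_{\kk})\circ(id_{X^*}\otimes id_X\otimes\sigma)\circ(id_{X^*}\otimes\gamma)$ after the identification $X\otimes\kk\cong X$, whereas the counterclockwise path produces $\sigma\circ(\text{ev}_X\otimes id_E)\circ(id_{X^*}\otimes\gamma)$. Both share the common right factor $id_{X^*}\otimes\gamma$, so it suffices to verify the equality of the two remaining maps $X^*\otimes X\otimes E\to\kk$, namely $(\text{ev}_X\otimes id_{\kk})\circ(id_{X^*}\otimes id_X\otimes\sigma)$ and $\sigma\circ(\text{ev}_X\otimes id_E)$. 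Evaluating on a simple tensor $f\otimes x\otimes e$ sends both to $f(x)\sigma(e)$, which is precisely the interchange law expressing that $\text{ev}_X$, acting on the first two factors, and $\sigma$, acting on the last factor, commute. Hence this square commutes as well.

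I would expect the only genuine hazard — and it is a mild one — to be the suppressed unit isomorphisms $X\otimes\kk\cong X$ and $\kk\otimes E\cong E$ that silently rewrite the codomains along the right and bottom edges of (\ref{gdiag2}); once these are tracked consistently, there is no real obstacle, which is why the assertion follows straightforwardly from Lemma~\ref{glem4}.
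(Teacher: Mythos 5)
Your proof is correct and is essentially the paper's argument made explicit: the paper simply remarks that the lemma is straightforward from Lemma~\ref{glem4}, i.e.\ from the naturality of $\Phi$ and $\Psi$ in the third variable, and your two diagram chases (functoriality of $-\otimes id_{X^*}$ for (\ref{gdiag22}), and the interchange law $(\mathrm{ev}_X\otimes id_{\kk})\circ(id_{X^*\otimes X}\otimes\sigma)=\sigma\circ(\mathrm{ev}_X\otimes id_E)$ for (\ref{gdiag2})) are exactly the verification of that naturality. Your remark about tracking the suppressed unit isomorphisms is apt but raises no genuine issue.
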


\begin{prop}\label{gg10}
Let $X$ be an indecomposable $H$-module. The following are equivalent:
\begin{enumerate}
  \item[(1)] $\kk\nmid X^*\otimes X$
  \item[(2)] The map $\textrm{Hom}_H(X^*\otimes X,E)\xrightarrow{\sigma_*}\textrm{Hom}_H(X^*\otimes X,\kk)$ is surjective.
  \item[(3)] The map $\textrm{Hom}_H(X,X\otimes E)\xrightarrow{(id_X\otimes\sigma)_*}\textrm{Hom}_H(X,X)$ is surjective.
  \item[(4)] The map $X\otimes E\xrightarrow{id_X\otimes\sigma}X$ is a split epimorphism.
  \item[(5)] The map $E\otimes X^*\xrightarrow{\sigma\otimes id_{X^*}}X^*$ is a split epimorphism.
\end{enumerate}
\end{prop}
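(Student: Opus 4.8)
The plan is to treat statement (2) as a hub: I would prove the formal equivalences $(2)\Leftrightarrow(3)$, $(3)\Leftrightarrow(4)$ and $(2)\Leftrightarrow(5)$ by adjunction bookkeeping, and reserve the substantive work for $(1)\Leftrightarrow(2)$. The starting observation is elementary: for any morphism $f\colon A\to X$ of $H$-modules, the induced map $f_*\colon\textrm{Hom}_H(X,A)\to\textrm{End}_H(X)$ is surjective if and only if $f$ is a split epimorphism. Indeed, if $f\circ s=id_X$ then every $\psi\in\textrm{End}_H(X)$ equals $f\circ(s\circ\psi)=f_*(s\circ\psi)$; conversely, surjectivity produces a preimage of $id_X$, that is, a section of $f$. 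Taking $f=id_X\otimes\sigma$ gives $(3)\Leftrightarrow(4)$ at once, and taking $f=\sigma\otimes id_{X^*}$ (with $X$ replaced by $X^*$) identifies (5) with the surjectivity of $(\sigma\otimes id_{X^*})_*\colon\textrm{Hom}_H(X^*,E\otimes X^*)\to\textrm{End}_H(X^*)$.

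I would then route these surjectivity statements through the two commutative diagrams of the preceding lemma. Setting $Y=X$ in diagram (\ref{gdiag2}), the vertical arrows $\Psi_{X,X,E}$ and $\Psi_{X,X,\kk}$ are isomorphisms by Lemma \ref{glem4}, so the top row $(id_X\otimes\sigma)_*$ is surjective exactly when the bottom row $\sigma_*$ is; this is $(2)\Leftrightarrow(3)$. Setting $Y=X^*$ in diagram (\ref{gdiag22}), the vertical arrows $\Phi_{X^*,X,E}$ and $\Phi_{X^*,X,\kk}$ are again isomorphisms, the top row is precisely the map $\sigma_*$ of (2), and the bottom row is $(\sigma\otimes id_{X^*})_*$; hence (2) is equivalent to (5). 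At this point (2), (3), (4) and (5) are all equivalent, and only the link to (1) remains.

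The heart of the argument is $(1)\Leftrightarrow(2)$, where I would use that $\sigma$ is right almost split — the reason for introducing the almost split sequence (\ref{equ2}), which exists precisely because $H$ is non-semisimple and so $\kk$ is non-projective. The defining property gives, for indecomposable $W$, that a morphism $h\colon W\to\kk$ factors through $\sigma$ if and only if it is not a split epimorphism. I would first upgrade this to arbitrary $W$: decomposing $W=\bigoplus_iW_i$ into indecomposables, a restriction $h|_{W_i}$ can be a split epimorphism only if $W_i\cong\kk$ with $h|_{W_i}$ an isomorphism, which would force $h$ itself to be a split epimorphism; thus if $h$ is not a split epimorphism then no restriction is one, each $h|_{W_i}$ factors through $\sigma$, and hence so does $h$. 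Consequently $\sigma_*\colon\textrm{Hom}_H(W,E)\to\textrm{Hom}_H(W,\kk)$ is surjective if and only if no morphism $W\to\kk$ is a split epimorphism, i.e. if and only if $\kk\nmid W$ (a split epimorphism onto $\kk$ existing exactly when $\kk\mid W$). Specializing $W=X^*\otimes X$ yields $(1)\Leftrightarrow(2)$ and closes the chain.

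The steps among (2)–(5) are pure manipulation of the two adjunction isomorphisms, so I expect the only delicate point to be $(1)\Leftrightarrow(2)$: one must handle the possibly decomposable module $X^*\otimes X$ and keep straight that ``factoring through $\sigma$'' is exactly the negation of ``being a split epimorphism onto $\kk$'', together with the translation of the latter into the divisibility $\kk\mid X^*\otimes X$. Everything else is functorial.
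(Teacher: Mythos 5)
Your proposal is correct, and its architecture coincides with the paper's: statement (2) serves as the hub, the equivalences $(2)\Leftrightarrow(3)$ and $(2)\Leftrightarrow(5)$ are obtained from the commutative diagrams (\ref{gdiag2}) and (\ref{gdiag22}) with $Y=X$ and $Y=X^*$ respectively, and the split-epimorphism translations $(3)\Leftrightarrow(4)$ and the second half of $(2)\Leftrightarrow(5)$ are exactly your elementary observation that $f_*$ hits $id_X$ if and only if $f$ splits. The one genuine divergence is in the converse of $(1)\Leftrightarrow(2)$. The paper argues: if $\sigma_*$ is surjective but $\kk\mid X^*\otimes X$, then Theorem \ref{thm2.7}(2) supplies an isomorphism $\theta:X^{**}\rightarrow X$ with $\text{Tr}^R_X(\theta)=id_{\kk}$, and lifting $\text{ev}_X$ through $\sigma$ and composing with $(id_{X^*}\otimes\theta)\circ\text{coev}_{X^*}$ exhibits $\sigma$ as a split epimorphism, a contradiction. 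You instead note that $\kk\mid X^*\otimes X$ already yields a split epimorphism $h:X^*\otimes X\rightarrow\kk$ (the projection onto the summand), and a lift $h=\sigma\circ\beta$ composed with a section of $h$ splits $\sigma$ directly. Your version is more elementary --- it bypasses the quantum trace machinery of Theorem \ref{thm2.7} entirely and uses only the definition of a direct summand together with the fact that $\sigma$, being right almost split, is not a split epimorphism --- while the paper's choice keeps the specific morphism $\text{ev}_X$ in view, consistent with the quantum-trace theme of its Section 2. Your upgrade of the right-almost-split property from indecomposable to arbitrary test modules via decomposition is sound, though strictly unnecessary under the standard definition (as in Auslander--Reiten--Smal\o), which quantifies over all modules; and your parenthetical justification that a split epimorphism onto $\kk$ exists exactly when $\kk\mid W$ is the small point one must (and you do) keep straight.
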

\proof $(1)\Leftrightarrow(2)$. If $\kk\nmid X^*\otimes X$,  then for any $\alpha\in\textrm{Hom}_H(X^*\otimes X,\kk)$, the map $\alpha$ is not a split epimorphism.  Since $\sigma$ is right almost split from $E$ to $\kk$, there is a map $\beta$ from $X^*\otimes X$ to $E$ such that $\sigma\circ\beta=\alpha$. This implies that $\sigma_*$ is surjective.
Conversely, if the map $\sigma_*$ is surjective, then $\kk\nmid X^*\otimes X$. Otherwise, by Theorem \ref{thm2.7} (2), there is an isomorphism $\theta:X^{**}\rightarrow X$ such that $\text{Tr}^R_X(\theta)=id_{\kk}$.
For the map $\text{ev}_X:X^*\otimes X\rightarrow\kk$, there is some $\beta\in\textrm{Hom}_H(X^*\otimes X,E)$ such that $\sigma\circ\beta=\text{ev}_{X}$ since the map $\sigma_*$ is surjective. It follows that $id_{\kk}=\text{Tr}_X^R(\theta)=\text{ev}_{X}\circ(id_{X^*}\otimes\theta)\circ\text{coev}_{X^*}
=\sigma\circ\beta\circ(id_{X^*}\otimes\theta)\circ\text{coev}_{X^*}.$ We obtain that the map $\sigma$ is a split epimorphism, a contradiction to the fact that $\sigma$ is right almost split.

$(2)\Leftrightarrow(3)$. According to the commutative diagram (\ref{gdiag2}), we have the following commutative diagram:
\begin{linenomath}
\begin{equation*}
\xymatrix{
 \textrm{Hom}_H(X,X\otimes E)\ar[d]_{\Psi_{X,X,E}} \ar[r]^{(id_X\otimes\sigma)_*}& \textrm{Hom}_H(X,X)\ar[d]^{\Psi_{X,X,\kk}}& \\
 \textrm{Hom}_H(X^*\otimes X,E) \ar[r]^{\sigma_*}& \textrm{Hom}_H(X^*\otimes X,\kk).&}
\end{equation*}
\end{linenomath}
It follows  that $\sigma_*$ is surjective if and only if $(id_X\otimes\sigma)_*$ is surjective.

$(3)\Leftrightarrow(4)$. If $(id_X\otimes\sigma)_*$ is surjective, for the identity map $id_X$, there is a map $\alpha\in\text{Hom}_H(X,X\otimes E)$ such that $(id_X\otimes\sigma)_*(\alpha)=id_X$. Then $(id_X\otimes\sigma)\circ\alpha=id_X$, and hence $id_X\otimes\sigma$ is a split epimorphism. Conversely, if $id_X\otimes\sigma$ is a split epimorphism, there is $\alpha\in\text{Hom}_H(X,X\otimes E)$ such that $(id_X\otimes\sigma)\circ\alpha=id_X$. For any $\beta\in \text{Hom}_H(X,X)$, we have $(id_X\otimes\sigma)_*(\alpha\circ\beta)=\beta$. It yields that the map $(id_X\otimes\sigma)_*$ is surjective.

$(2)\Leftrightarrow(5)$. It follows from the commutative diagram (\ref{gdiag22}) that the diagram
\begin{linenomath}
\begin{equation*}
\xymatrix{
  \textrm{Hom}_H(X^*\otimes X,E) \ar[d]_{\Phi_{X^*,X,E}} \ar[r]^{\sigma_*}& \textrm{Hom}_H(X^*\otimes X,\kk)\ar[d]^{\Phi_{X^*,X,\kk}}& \\
 \textrm{Hom}_H(X^*,E\otimes X^*) \ar[r]^{(\sigma\otimes id_{X^*})_*}&\textrm{Hom}_H(X^*,X^*)&}
\end{equation*}
\end{linenomath}
is commutative. Thus,  $\sigma_*$ is surjective if and only if $(\sigma\otimes id_{X^*})_*$ is surjective.
If $(\sigma\otimes id_{X^*})_*$ is surjective, for the identity map $id_{X^*}$, there is $\alpha\in\textrm{Hom}_H(X^*,E\otimes X^*)$ such that $id_{X^*}=(\sigma\otimes id_{X^*})_*(\alpha)=(\sigma\otimes id_{X^*})\circ\alpha$. This implies that $\sigma\otimes id_{X^*}$ is a split epimorphism. Conversely, if $\sigma\otimes id_{X^*}$ is a split epimorphism, there is $\alpha\in\textrm{Hom}_H(X^*,E\otimes X^*)$ such that $(\sigma\otimes id_{X^*})\circ\alpha=id_{X^*}$. For any $\beta\in\textrm{Hom}_H(X^*,X^*)$, we obtain that $(\sigma\otimes id_{X^*})_*(\alpha\circ\beta)=\beta$. It follows that the map $(\sigma\otimes id_{X^*})_*$ is surjective.
\qed

Similarly, there are some equivalent conditions for $\kk\nmid X\otimes X^*$. However, we only need the following characterization, which is  useful in the study of the Green ring of $H$.
\begin{prop}\label{gg07}
Let $X$ be an indecomposable $H$-module. The following are equivalent:
\begin{enumerate}
  \item[(1)] $\kk\nmid X\otimes X^*$
  \item[(2)] The map $E\otimes X\xrightarrow{\sigma\otimes id_X}X$ is a split epimorphism.
\end{enumerate}
\end{prop}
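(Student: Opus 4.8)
The plan is to establish the two implications separately, following the scheme of Proposition \ref{gg10} but adapted to the product $X\otimes X^*$, which (unlike $X^*\otimes X$) is governed by the \emph{left} quantum trace $\mathrm{Tr}^L$ via Theorem \ref{thm2.7}(1). The engine in both directions is the right almost split property of $\sigma$, together with the observation that an $H$-linear map $X\otimes X^*\to\kk$ is a split epimorphism precisely when it does not factor through $\sigma$.

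For $(2)\Rightarrow(1)$ I would argue directly, by contradiction. Assume $\sigma\otimes id_X$ is a split epimorphism with section $s\colon X\to E\otimes X$, so $(\sigma\otimes id_X)\circ s=id_X$, and suppose $\kk\mid X\otimes X^*$. By Theorem \ref{thm2.7}(1) (with $Y=X$) there is an isomorphism $\theta\colon X\to X^{**}$ with $\mathrm{Tr}^L_X(\theta)\neq0$. Put $\alpha_\theta=\mathrm{ev}_{X^*}\circ(\theta\otimes id_{X^*})\colon X\otimes X^*\to\kk$, which is $H$-linear, and form $\beta=(id_E\otimes\alpha_\theta)\circ(s\otimes id_{X^*})\colon X\otimes X^*\to E$. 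A short computation using only $(\sigma\otimes id_X)\circ s=id_X$ and the linearity of $\theta$ gives $\sigma\circ\beta=\alpha_\theta$. Since $\alpha_\theta\circ\mathrm{coev}_X=\mathrm{Tr}^L_X(\theta)\neq0$, the map $\alpha_\theta$ is a split epimorphism, and hence so is $\sigma$, contradicting that $\sigma$ is right almost split. The essential point is that the twist $\theta$ is absorbed into $\alpha_\theta$, so the possible vanishing of the naive quantum dimension $\dim_\kk X$ never intervenes.

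For $(1)\Rightarrow(2)$ I would pass to duals. As $*$ is a duality with $\kk^*\cong\kk$ and $(X\otimes X^*)^*\cong X^{**}\otimes X^*$, condition (1) is equivalent to $\kk\nmid X^{**}\otimes X^*=(X^*)^*\otimes X^*$; applying the equivalence $(1)\Leftrightarrow(5)$ of Proposition \ref{gg10} to the indecomposable module $X^*$ then shows that $\sigma\otimes id_{X^{**}}\colon E\otimes X^{**}\to X^{**}$ is a split epimorphism. The main obstacle is the remaining descent from $X^{**}$ to $X$: the canonical isomorphisms $\Phi,\Psi$ invariably turn $X\otimes X^*$ into the double dual $X^{**}$ rather than into $X$, so this step does not follow formally from the given diagrams. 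I would resolve it using the double dual functor $D=(-)^{**}$, a $\kk$-linear monoidal autoequivalence of $H$-mod that preserves almost split sequences. Applying $D$ to the almost split sequence (\ref{equ2}) and invoking the uniqueness of the almost split sequence ending at $\kk^{**}\cong\kk$ produces an isomorphism $\psi\colon E^{**}\to E$ with $\sigma\circ\psi=\sigma^{**}$; applying $D$ to $\sigma\otimes id_X$ and using monoidality identifies $D(\sigma\otimes id_X)=\sigma^{**}\otimes id_{X^{**}}$ with $\sigma\otimes id_{X^{**}}$ via $\psi$. Since $D$ preserves and reflects split epimorphisms, $\sigma\otimes id_X$ is a split epimorphism if and only if $\sigma\otimes id_{X^{**}}$ is, which closes the argument. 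Equivalently, one may phrase the whole proof with the left dual ${}^\vee X$: the adjunction $\mathrm{Hom}_H(X\otimes{}^\vee X,-)\cong\mathrm{Hom}_H(X,-\otimes X)$ shows at once that $\sigma\otimes id_X$ is a split epimorphism iff $\kk\nmid X\otimes{}^\vee X$, and the identity $(X\otimes{}^\vee X)^*\cong X\otimes X^*$ matches this with (1).
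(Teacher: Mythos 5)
Your proposal is correct, but it follows a genuinely different and much heavier path than the paper's own proof, which is a two-line reduction: since the duality functor is invertible on isomorphism classes (the paper cites finiteness of the order of $S^2$; bijectivity of $S$ already suffices), choose an indecomposable $Y$ with $Y^*\cong X$; then $X\otimes X^*\cong Y^*\otimes Y^{**}\cong (Y^*\otimes Y)^*$, so $\kk\nmid X\otimes X^*$ iff $\kk\nmid Y^*\otimes Y$, and the equivalence $(1)\Leftrightarrow(5)$ of Proposition \ref{gg10} applied to $Y$ yields exactly that $\sigma\otimes id_{Y^*}=\sigma\otimes id_X$ is a split epimorphism. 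In effect, where you dualize \emph{upward} to $X^{**}$ (applying Proposition \ref{gg10}(5) to the module $X^*$) and are then forced into a descent from $\sigma\otimes id_{X^{**}}$ back to $\sigma\otimes id_X$, the paper dualizes \emph{downward}, replacing $X$ by $Y\cong{}^{\vee}X$ before ever invoking Proposition \ref{gg10}, so the descent problem you correctly flag never arises; your closing remark via the left dual, using $({}^{\vee}X)^*\cong X$, essentially rediscovers this shortcut. That said, everything you wrote checks out: in $(2)\Rightarrow(1)$ the interchange computation $\sigma\circ\beta=\alpha_\theta$ is valid, $\mathrm{Tr}^L_X(\theta)^{-1}\,\mathrm{coev}_X$ is a section of $\alpha_\theta$, and splitting $\sigma$ contradicts right almost splitness --- a self-contained re-run of the mechanism inside Theorem \ref{thm2.7} and Proposition \ref{gg10}; and in $(1)\Rightarrow(2)$ the functor $D=(-)^{**}$ is indeed a monoidal autoequivalence (as $S$ is bijective), autoequivalences preserve almost split sequences, and the comparison isomorphism $\psi$ with $\sigma\circ\psi=\sigma^{**}$ follows from the uniqueness (with identity on the end term) of the almost split sequence ending at $\kk\cong\kk^{**}$, so $D$ transports split-epimorphy between $\sigma\otimes id_X$ and $\sigma\otimes id_{X^{**}}$ as claimed. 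What each approach buys: yours gives a self-contained quantum-trace argument for one direction and makes explicit the monoidal naturality the paper leaves implicit, at the cost of the double-dual machinery; the paper's choice of $Y$ trades all of that for a single application of Proposition \ref{gg10}.
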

\proof Let $Y$ be indecomposable such that $Y^*\cong X$ (such a $Y$ exists as the order of $S^2$ is finite). Then $\kk\nmid X\otimes X^*$ if and only if $\kk\nmid(Y^{*}\otimes Y)^*$ if and only if $\kk\nmid Y^{*}\otimes Y$. By Proposition \ref{gg10}, this is precisely $\sigma\otimes id_{Y^*}$ is a split epimorphism, as desired. \qed

Although we have characterized $\kk\nmid X^*\otimes X$ and $\kk\nmid X\otimes X^*$ respectively in the previous two propositions, we still find the following characterizations of $\kk\mid X^*\otimes X$ and  $\kk\mid X\otimes X^*$ useful.
\begin{prop}\label{gg11}
Let $X$ be an indecomposable $H$-module. The following are equivalent:
\begin{enumerate}
  \item[(1)] $\kk\mid X^*\otimes X$.
  \item[(2)] The map $X\otimes E\xrightarrow{id_X\otimes\sigma}X$ is right almost split.
\end{enumerate}
\end{prop}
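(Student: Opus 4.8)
The plan is to prove the equivalence of (1) and (2) directly, exploiting the characterization of $\kk\nmid X\otimes X^*$ already established in Proposition \ref{gg07} and the defining universal property of an almost split map. The key observation is that the sequence (\ref{equ4}), obtained by tensoring the almost split sequence (\ref{equ2}) on the left with $X$, gives us a short exact sequence $0\to X\otimes\tau(\kk)\to X\otimes E\xrightarrow{id_X\otimes\sigma}X\to0$ whose surjection is the candidate right almost split map. A morphism $X\otimes E\to X$ is right almost split precisely when it is not a split epimorphism but every non-split-epimorphism $h:W\to X$ factors through it.

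First I would record that by Proposition \ref{gg07}, condition (1) $\kk\mid X\otimes X^*$ is equivalent to saying that $\sigma\otimes id_X:E\otimes X\to X$ is \emph{not} a split epimorphism; and by the symmetric version of Proposition \ref{gg10} (conditions (1) and (4) there), $\kk\mid X\otimes X^*$ is equivalent to $id_X\otimes\sigma:X\otimes E\to X$ being \emph{not} a split epimorphism. So under hypothesis (1) the map $id_X\otimes\sigma$ is guaranteed to be a non-split epimorphism — this handles the first half of the definition of right almost split and shows the two conditions are at least compatible. For the implication (1)$\Rightarrow$(2), it then remains to verify the factorization property: given any $H$-module $W$ and any morphism $h:W\to X$ that is not a split epimorphism, I must produce a lift $\tilde h:W\to X\otimes E$ with $(id_X\otimes\sigma)\circ\tilde h=h$. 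The natural route is to transport $h$ across the adjunction isomorphisms of Lemma \ref{glem4}: via $\Psi_{W,X,\kk}$, the morphism $h:W\to X$ corresponds to a morphism $X^*\otimes W\to\kk$, and the commutative diagram (\ref{gdiag2}) (with $Y$ replaced by $W$) relates the liftability of $h$ along $id_X\otimes\sigma$ to the liftability of the corresponding map $X^*\otimes W\to\kk$ along $\sigma_*$. Since $\sigma$ is right almost split and the map $X^*\otimes W\to\kk$ is not a split epimorphism (which I would need to check is equivalent to $h$ not being a split epimorphism, using that $\Psi$ reflects split epimorphisms as in Proposition \ref{gprop1}(2)), such a lift exists; pulling it back through $\Psi^{-1}$ yields the desired $\tilde h$.

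For the converse (2)$\Rightarrow$(1), I would argue that if $id_X\otimes\sigma$ is right almost split then in particular it is not a split epimorphism, whence (1) follows immediately from Proposition \ref{gg10}, (1)$\Leftrightarrow$(4). So the substantive content is entirely in (1)$\Rightarrow$(2).

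The main obstacle I anticipate is the factorization step in (1)$\Rightarrow$(2): carefully matching up, through the adjunctions $\Psi_{-,X,\kk}$ and $\Psi_{-,X,E}$, the condition ``$h$ is not a split epimorphism'' with ``the transported map $X^*\otimes W\to\kk$ is not a split epimorphism,'' so that the right almost split property of $\sigma$ can be invoked. This requires that the bijections of Lemma \ref{glem4} be checked to preserve and reflect non-split-epimorphisms in the needed direction — essentially the content of Proposition \ref{gprop1} applied in this slightly more general setting where $Y$ is an arbitrary test module rather than indecomposable. I would verify that the naturality square (\ref{gdiag2}) makes the two lifting problems literally equivalent, so that the almost split property of $\sigma$ transfers cleanly to $id_X\otimes\sigma$.
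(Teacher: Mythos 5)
Your proposal takes essentially the same route as the paper's proof: (2)$\Rightarrow$(1) via Proposition \ref{gg10}, and (1)$\Rightarrow$(2) by transporting a non-split epimorphism $h\colon W\rightarrow X$ through $\Psi_{W,X,\kk}$ (not a split epimorphism by Proposition \ref{gprop1}(2)), lifting against the right almost split map $\sigma$, and returning via $\Psi^{-1}_{W,X,E}$ together with the naturality square (\ref{gdiag2}). Two small corrections: condition (1) is $\kk\mid X^*\otimes X$, not $\kk\mid X\otimes X^*$, so Proposition \ref{gg10} applies directly and neither Proposition \ref{gg07} nor a ``symmetric version'' is needed; and Proposition \ref{gprop1} is already stated for arbitrary $Y$ --- what actually requires checking is its hypothesis $X\cong X^{**}$, which follows from (1) by Theorem \ref{thm2.7}(2), exactly as the paper notes.
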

\proof If $id_X\otimes\sigma$ is right almost split, it is not a split epimorphism. By Proposition \ref{gg10},
we have $\kk\mid X^*\otimes X$. Conversely, if $\kk\mid X^*\otimes X$, by Proposition \ref{gg10}, the map $id_X\otimes\sigma$ is not a split epimorphism. The condition $\kk\mid X^*\otimes X$ implies that $X\cong X^{**}$.
For any $\alpha\in\textrm{Hom}_H(Y,X)$ which is not split epimorphism, the map $\Psi_{Y,X,\kk}(\alpha)\in\textrm{Hom}_H(X^*\otimes Y,\kk)$ is also not split epimorphism by Proposition \ref{gprop1} (2). For the map $\Psi_{Y,X,\kk}(\alpha)$, there is a map $\beta\in\textrm{Hom}_H(X^*\otimes Y,E)$ such that
\begin{linenomath}
$$\sigma\circ\beta=\Psi_{Y,X,\kk}(\alpha)$$
\end{linenomath} since $\sigma$ is right almost split. Note that $\Psi^{-1}_{Y,X,E}(\beta)\in\textrm{Hom}_H(Y,X\otimes E)$. We claim that the map $\Psi^{-1}_{Y,X,E}(\beta)$ satisfies the relation $(id_X\otimes\sigma)\circ\Psi^{-1}_{Y,X,E}(\beta)=\alpha$, and hence $id_X\otimes\sigma$ is right almost split. In fact, the commutative diagram (\ref{gdiag2}) implies  that
\begin{linenomath}
$$\Psi_{Y,X,\kk}\circ(id_X\otimes\sigma)_*=\sigma_*\circ\Psi_{Y,X,E}.$$
\end{linenomath} Therefore, we have:
\begin{linenomath}
\begin{align*}
\alpha&=\Psi^{-1}_{Y,X,\kk}(\sigma\circ\beta)\\
&=(\Psi^{-1}_{Y,X,\kk}\circ \sigma_*)(\beta)\\
&=((id_X\otimes\sigma)_*\circ\Psi_{Y,X,E}^{-1})(\beta)\\
&=(id_X\otimes\sigma)\circ\Psi^{-1}_{Y,X,E}(\beta),
\end{align*}
\end{linenomath}
which completes the proof. \qed

Similarly, we have the following.

\begin{prop}\label{gprop8}
Let $X$ be an indecomposable $H$-module. The following are equivalent:
\begin{enumerate}
  \item[(1)] $\kk\mid X\otimes X^*$.
  \item[(2)] The map $E\otimes X\xrightarrow{\sigma\otimes id_X}X$ is right almost split.
\end{enumerate}
\end{prop}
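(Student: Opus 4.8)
The plan is to imitate the proof of Proposition \ref{gg11} almost verbatim, interchanging the roles of the two tensor factors: wherever that argument invoked the commutative diagram (\ref{gdiag2}), the isomorphism $\Psi$, and Proposition \ref{gprop1}(2), I will instead invoke the diagram (\ref{gdiag22}), the isomorphism $\Phi$, and Proposition \ref{gprop1}(1). The implication $(2)\Rightarrow(1)$ is immediate: a right almost split morphism is by definition not a split epimorphism, so if $\sigma\otimes id_X$ is right almost split then it is not a retraction, and the contrapositive of Proposition \ref{gg07} yields $\kk\mid X\otimes X^*$.

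For $(1)\Rightarrow(2)$, assume $\kk\mid X\otimes X^*$. Proposition \ref{gg07} already guarantees that $\sigma\otimes id_X$ is not a split epimorphism, which is the first requirement for a map to be right almost split. Moreover $\kk\mid X\otimes X^*$ forces $X\cong X^{**}$ by Corollary \ref{gprop2}, so I fix an isomorphism $\theta\colon X\to X^{**}$ and use it throughout to pass between $\sigma\otimes id_X$ and $\sigma\otimes id_{X^{**}}$. It then remains to verify the lifting property: given an arbitrary $Y\in H\text{-mod}$ and any $\alpha\in\text{Hom}_H(Y,X)$ that is not a split epimorphism, I must construct $\gamma\in\text{Hom}_H(Y,E\otimes X)$ with $(\sigma\otimes id_X)\circ\gamma=\alpha$.

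The mechanism is as follows. Composing with $\theta$ and applying $\Phi^{-1}_{Y,X^*,\kk}$ produces a morphism $\Phi^{-1}_{Y,X^*,\kk}(\theta\circ\alpha)\in\text{Hom}_H(Y\otimes X^*,\kk)$; since $\Phi_{Y,X^*,\kk}$ preserves split epimorphisms by Proposition \ref{gprop1}(1), this morphism is again not a split epimorphism. Because $\sigma$ is right almost split, it lifts: there is $\beta\in\text{Hom}_H(Y\otimes X^*,E)$ with $\sigma\circ\beta=\Phi^{-1}_{Y,X^*,\kk}(\theta\circ\alpha)$. Reading the commutative diagram (\ref{gdiag22}) with its module slot filled by $X^*$ — so that its bottom row becomes $(\sigma\otimes id_{X^{**}})_*$ — one obtains $(\sigma\otimes id_{X^{**}})\circ\Phi_{Y,X^*,E}(\beta)=\theta\circ\alpha$. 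Setting $\gamma=(id_E\otimes\theta^{-1})\circ\Phi_{Y,X^*,E}(\beta)\in\text{Hom}_H(Y,E\otimes X)$ and using the identity $(\sigma\otimes id_X)\circ(id_E\otimes\theta^{-1})=\theta^{-1}\circ(\sigma\otimes id_{X^{**}})$ then gives $(\sigma\otimes id_X)\circ\gamma=\theta^{-1}\circ(\theta\circ\alpha)=\alpha$, as required.

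The only genuinely delicate point, and the step I expect to be the main obstacle, is the bookkeeping around the identification $X\cong X^{**}$: diagram (\ref{gdiag22}) naturally outputs $\sigma\otimes id_{X^{**}}$ rather than $\sigma\otimes id_X$, so one must conjugate by the fixed isomorphism $\theta$ and carefully verify the intertwining relation $(\sigma\otimes id_X)\circ(id_E\otimes\theta^{-1})=\theta^{-1}\circ(\sigma\otimes id_{X^{**}})$ in order to turn the factorization of $\theta\circ\alpha$ into a factorization of $\alpha$ itself. Once this identification is in place, the remainder is a routine transcription of the proof of Proposition \ref{gg11}, with $\Psi$ replaced by $\Phi$ and the left tensor factor replaced by the right one.
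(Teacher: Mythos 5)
Your proof is correct and is essentially the paper's own argument: the paper proves Proposition \ref{gg11} and disposes of Proposition \ref{gprop8} with ``similarly,'' and your mirrored version --- Proposition \ref{gg07} in place of Proposition \ref{gg10} for the non-split-epimorphism condition, diagram (\ref{gdiag22}) and $\Phi$ with Proposition \ref{gprop1}(1) in place of diagram (\ref{gdiag2}) and $\Psi$ with Proposition \ref{gprop1}(2), together with the explicit $\theta$-conjugation to convert the factorization of $\theta\circ\alpha$ through $\sigma\otimes id_{X^{**}}$ into one of $\alpha$ through $\sigma\otimes id_{X}$ --- is exactly the intended transcription, and the intertwining identity you verify is the routine bifunctoriality of $\otimes$. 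One minor citation slip: the implication ``$\kk\mid X\otimes X^{*}$ implies $X\cong X^{**}$'' should be attributed to Theorem \ref{thm2.7}(1) (as the paper does in the proof of Proposition \ref{gg13}), not to Corollary \ref{gprop2}, whose statement already takes $X\cong X^{**}$ as a hypothesis.
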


\begin{rem}
An indecomposable module satisfying one of the equivalent conditions in Proposition \ref{gg11}  or  in Proposition \ref{gprop8} is called a \textit{splitting trace module}, see e.g., \cite{EGST, GMS, Y}.
\end{rem}

\section{\bf Bilinear forms on Green rings}
As shown in \cite{Be}, an approach to study the Green ring
of a finite group is through bilinear forms defined by dimensions of morphism spaces. In this section, we follow the same approach and define similar bilinear forms on the Green ring $r(H)$ of $H$. As we shall see, these bilinear forms can be used to investigate some properties of $r(H)$ presented in the next section.

Let $F(H)$ be the free abelian group generated by isomorphism classes $[X]$ of $X\in H$-mod. The group $F(H)$ is in fact a ring with a multiplication given by the tensor product $[X][Y]=[X\otimes Y]$. The \textit{Green ring} (or the \textit{representation ring}) $r(H)$ of  $H$ is defined to be the quotient ring of $F(H)$ modulo the relations $[X\oplus Y]=[X]+[Y]$, for $X,Y\in H$-mod. The identity of the associative ring  $r(H)$ is represented by the trivial module $[\mathbbm{k}]$. The set ind$(H)$ consisting of all isomorphism classes of indecomposable objects in $H$-mod forms a $\mathbb{Z}$-basis of $r(H)$, see e.g., \cite{Ch, DH, HOYZ, LH, Wa}.

The \textit{Grothendieck ring} $G_0(H)$ of $H$ is the quotient ring of $F(H)$ modulo all short exact sequences of $H$-modules, i.e., $[Y]=[X]+[Z]$ if $0\rightarrow X\rightarrow Y\rightarrow Z\rightarrow0$ is exact. The Grothendieck ring $G_0(H)$ possesses a $\mathbb{Z}$-basis given by isomorphism classes of simple $H$-modules. Both $r(H)$ and  $G_0(H)$ are  augmented $\mathbb{Z}$-algebras with the dimension augmentation. There is a natural ring epimorphism from $r(H)$ to $G_0(H)$ given by
\begin{equation}\label{equgel}
\varphi:r(H)\rightarrow G_0(H),\ [M]\mapsto\sum_{[V]}[M:V][V],
\end{equation} where $[M:V]$ is the multiplicity of $V$ in the composition sequence of $M$ and the sum $\sum_{[V]}$ runs over all non-isomorphic simple $H$-modules. If $H$ is semisimple, the map $\varphi$ is the identity map.


Let $Z$ be an  indecomposable $H$-module. If $Z$ is  non-projective, there is a unique almost split sequence $0\rightarrow X\rightarrow Y\rightarrow Z\rightarrow0$ ending at $Z$. We follow the notation given in \cite[Section 4, ChVI]{ARS} and denote by $\delta_{[Z]}$ the element $[X]-[Y]+[Z]$ in $r(H)$. In case $Z$ is  projective, we define $\delta_{[Z]}:=[Z]-[\textrm{rad}Z].$
The following is a weaker condition for $\delta_{[Z]}=[X]-[Y]+[Z]$ in $r(H)$.

\begin{prop}\label{gg14}
Let $Z$ be an indecomposable non-projective module, and $0\rightarrow X\rightarrow Y\xrightarrow{\alpha}Z\rightarrow0$ a short exact sequence. If the map $\alpha$ is right almost split, then $\delta_{[Z]}=[X]-[Y]+[Z].$
\end{prop}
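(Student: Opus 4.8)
The plan is to extract from the right almost split map $\alpha$ its right-minimal part, identify that part with the middle map of the almost split sequence ending at $Z$, and then compare the two sequences inside $r(H)$. Write the almost split sequence as $0\to X'\to Y'\xrightarrow{g}Z\to0$, so that by definition $\delta_{[Z]}=[X']-[Y']+[Z]$. Since the class $[Y]$ in $r(H)$ depends only on the multiset of indecomposable summands of $Y$, it suffices to produce a single module $Q$ with $Y\cong Y'\oplus Q$ and $X\cong X'\oplus Q$; the claimed identity then follows by cancelling $[Q]$.

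The first and main step is to establish $Y\cong Y'\oplus Q$ in such a way that $\alpha$ becomes $(g,0)$. Here I would use that $g$ is the \emph{minimal} right almost split map ending at $Z$, while $\alpha$ is right almost split. Neither $g$ nor $\alpha$ is a split epimorphism, so right almost splitness produces $u\colon Y'\to Y$ and $v\colon Y\to Y'$ with $g=\alpha\circ u$ and $\alpha=g\circ v$. Combining these gives $g=\alpha\circ u=g\circ(v\circ u)$, and right minimality of $g$ forces $v\circ u\in\mathrm{End}_H(Y')$ to be an automorphism $w$. Then $v\circ(u\circ w^{-1})=\mathrm{id}_{Y'}$, so $v$ is a split epimorphism with section $s:=u\circ w^{-1}$, whence $Y\cong\mathrm{im}(s)\oplus\ker v$ with $\mathrm{im}(s)\cong Y'$. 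Set $Q:=\ker v$. On the $Y'$-summand, $\alpha\circ s=g\circ v\circ u\circ w^{-1}=g$ (via the identification $y'\mapsto s(y')$), while on $Q=\ker v$ one has $\alpha=g\circ v=0$. Thus, under $Y\cong Y'\oplus Q$, the map $\alpha$ is exactly $(g,0)$.

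It then remains to read off the kernel and finish the computation in $r(H)$. Since $\alpha=(g,0)\colon Y'\oplus Q\to Z$, its kernel is $\ker g\oplus Q=X'\oplus Q$, and as $X=\ker\alpha$ from the given short exact sequence we obtain $X\cong X'\oplus Q$ with the \emph{same} $Q$. Substituting into $r(H)$,
\[
[X]-[Y]+[Z]=([X']+[Q])-([Y']+[Q])+[Z]=[X']-[Y']+[Z]=\delta_{[Z]},
\]
as desired. I expect the only delicate point to be the justification that the right-minimal part of $\alpha$ is genuinely isomorphic to the map $g$ of the almost split sequence; this rests on the uniqueness up to isomorphism of the minimal right almost split map at the indecomposable non-projective module $Z$, together with careful bookkeeping ensuring that the complement $Q$ produced inside $Y$ is literally the module that reappears inside $X$. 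Both facts are standard in Auslander--Reiten theory but should be made explicit.
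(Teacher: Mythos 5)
Your proof is correct, but it runs in the opposite direction from the paper's and uses different inputs, so it is worth comparing. The paper starts from $\alpha$ itself: it invokes the general decomposition of an arbitrary morphism into a right minimal part plus a zero part (\cite[Theorem 2.2, ChI]{ARS}) to write $Y=Y_1\oplus Y_2$ with $\alpha|_{Y_1}$ right minimal and $\alpha|_{Y_2}=0$, then cites \cite[Proposition 1.12, ChV]{ARS} to recognize $0\rightarrow\ker(\alpha|_{Y_1})\rightarrow Y_1\rightarrow Z\rightarrow0$ as the almost split sequence, and finally applies the short five lemma to the two exact sequences to identify $X\cong\ker(\alpha|_{Y_1})\oplus Y_2$. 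You instead start from the known almost split sequence $0\rightarrow X'\rightarrow Y'\xrightarrow{g}Z\rightarrow0$ and compare it with $\alpha$ by hand: the mutual factorizations $g=\alpha\circ u$ and $\alpha=g\circ v$ (both legitimate, since a right almost split map is by definition not a split epimorphism), combined with right minimality of $g$, force $v\circ u$ to be an automorphism, and the resulting section $s=u\circ w^{-1}$ splits $Y\cong Y'\oplus\ker v$ with $\alpha$ identified with $(g,0)$; the kernel computation $X\cong X'\oplus\ker v$ is then immediate, with no five lemma needed. The decompositions obtained are of course the same ($Y_1\leftrightarrow s(Y')$, $Y_2\leftrightarrow\ker v$), but your route is more self-contained: it uses only the existence and minimality of the almost split sequence at $Z$ plus elementary idempotent splitting, at the cost of explicit bookkeeping with $u$, $v$, $w$ that the paper outsources to the two cited results of Auslander--Reiten--Smal\o. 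One small remark: the ``delicate point'' you flag at the end, uniqueness up to isomorphism of the minimal right almost split map, is not actually needed anywhere in your argument --- your explicit construction of $s$ already performs all the identification, so your proof is complete as written.
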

\proof Since the sequence \begin{equation}\label{equ9}0\rightarrow X\rightarrow Y\xrightarrow{\alpha}Z\rightarrow0\end{equation} is exact and the map $\alpha$ is right almost split, it follows from \cite[Theorem 2.2, ChI]{ARS} that the middle term $Y$ has a decomposition $Y=Y_1\oplus Y_2$ such that the restriction of $\alpha$ to the summand $Y_1$, denoted  $\alpha|_{Y_1}$,  is right minimal, and the restriction of $\alpha$ to the summand $Y_2$ is zero. We obtain that $\alpha|_{Y_1}$ is right minimal as well as right almost split. By
\cite[Proposition 1.12, ChV]{ARS}, the sequence
$0\rightarrow \ker(\alpha|_{Y_1})\xrightarrow{\iota}Y_1\xrightarrow{\alpha|_{Y_1}}Z\rightarrow0$ is almost split, where $\iota$ is the inclusion map. Thus, $\delta_{[Z]}=[\ker(\alpha|_{Y_1})]-[Y_1]+[Z].$ Meanwhile, it is easy to see that the sequence
\begin{equation}\label{equ10}0\rightarrow \ker(\alpha|_{Y_1})\oplus Y_2\xrightarrow{\iota\coprod id_{Y_2}}Y_1\oplus Y_2\xrightarrow{\alpha}Z\rightarrow0\end{equation} is exact. Applying the short five lemma to the sequences (\ref{equ9}) and (\ref{equ10}), we obtain that $X\cong\ker(\alpha|_{Y_1})\oplus Y_2$. In this case,
\begin{linenomath}
\begin{align*}
\delta_{[Z]}&=[\ker(\alpha|_{Y_1})]-[Y_1]+[Z]\\
&=[\ker(\alpha|_{Y_1})\oplus Y_2]-[Y_1\oplus Y_2]+[Z]\\
&=[X]-[Y]+[Z],
\end{align*}
\end{linenomath}  as desired.
\qed

For any two objects $X,Y\in H$-mod, following \cite{Be, NR, Wi2} we define
\begin{equation*}
\langle[X],[Y]\rangle_1:=\dim_{\kk} \textrm{Hom}_H(X,Y).
\end{equation*}
Then, $\langle-,-\rangle_1$ extends to a $\mathbb{Z}$-bilinear form on $r(H)$.
The following results can be found from Proposition 4.1, Theorem 4.3 and  Theorem 4.4 in \cite[ChVI]{ARS}.

\begin{lem}\label{gg15}
The following hold in $r(H)$:
\begin{enumerate}
\item[$(1)$] For any two indecomposable modules $X$ and $Z$, $\langle[X],\delta_{[Z]}\rangle_1=\delta_{[X],[Z]}$, where $\delta_{[X],[Z]}$ is equal to 1 if $X\cong Z$, and 0 otherwise.
\item[$(2)$] For any $x\in r(H)$, $x=\sum_{[M]\in\textrm{ind}(H)}\langle x,\delta_{[M]}\rangle_1 [M]$.
\item[$(3)$] $\{\delta_{[M]}\mid [M]\in\textrm{ind}(H)\}$ is linearly independent in $r(H)$.
\item[$(4)$] $H$ is of finite representation type if and only if $\{\delta_{[M]}\mid [M]\in\textrm{ind}(H)\}$ forms a $\mathbb{Z}$-basis of $r(H)$.
\item[$(5)$] $H$ is of finite representation type if and only if $\{\delta_{[M]}\mid [M]\in\textrm{ind}(H)$ and $M$ not projective$\}$ forms a $\mathbb{Z}$-basis of $\ker\varphi$, where $\varphi$ is the map given in (\ref{equgel}).
\end{enumerate}
\end{lem}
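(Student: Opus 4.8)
The plan is to reduce all five parts to the single defining property of the right almost split maps that produce the $\delta_{[Z]}$, and then to run formal linear algebra on top of it. First I would record that for $Z$ indecomposable non-projective the almost split sequence $0\to X\to Y\xrightarrow{\alpha}Z\to0$ has the property that a morphism $W\to Z$ factors through $\alpha$ iff it is not a split epimorphism, and that for $Z$ indecomposable projective the inclusion $\mathrm{rad}Z\hookrightarrow Z$ is right almost split with the same factorization property. Part (1) is the crux: applying the left exact functor $\mathrm{Hom}_H(W,-)$ for $W$ indecomposable gives, in the non-projective case, an exact sequence $0\to\mathrm{Hom}_H(W,X)\to\mathrm{Hom}_H(W,Y)\to\mathrm{Hom}_H(W,Z)\to C\to0$, so that $\langle[W],\delta_{[Z]}\rangle_1=\dim_{\kk}C$ by additivity of dimension. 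The image of the middle map is exactly the non-split epimorphisms, so $C=0$ when $W\not\cong Z$ (a split epimorphism between indecomposables is an isomorphism) and $C\cong\mathrm{End}_H(Z)/\mathrm{rad}\,\mathrm{End}_H(Z)\cong\kk$ when $W\cong Z$ (locality of the endomorphism ring and $\kk$ algebraically closed), giving $\dim_{\kk}C=\delta_{[W],[Z]}$. The projective case is identical, using that $\mathrm{Hom}_H(W,-)$ applied to $\mathrm{rad}Z\hookrightarrow Z$ is injective with image the non-split epimorphisms.

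Parts (2) and (3) then fall out formally. Since $\mathrm{ind}(H)$ is a $\mathbb{Z}$-basis of $r(H)$, I would write $x=\sum_{[M]}c_M[M]$ and pair on the right with $\delta_{[N]}$; Part (1) gives $\langle x,\delta_{[N]}\rangle_1=c_N$, which is (2). For (3), pairing a relation $\sum_M a_M\delta_{[M]}=0$ on the left with each $[N]$ forces $a_N=0$.

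For (4), the forward direction is linear algebra: if $\mathrm{ind}(H)$ is finite then $r(H)$ is free of finite rank with basis $\{[M]\}$; writing $\delta_{[Z]}=\sum_N d_{Z,N}[N]$ and $G_{X,N}=\langle[X],[N]\rangle_1$, relation (1) reads $DG^{\mathsf T}=I$ with $D,G$ integer matrices, so $\det D=\pm1$ and $\{\delta_{[M]}\}$ is a $\mathbb{Z}$-basis. The reverse direction is the genuinely representation-theoretic step and is where I expect the main obstacle. By linear independence the only candidate expansion of $[N]$ is $[N]=\sum_Z\dim_{\kk}\mathrm{Hom}_H(Z,N)\,\delta_{[Z]}$, so $[N]$ lies in the span of the $\delta$'s iff only finitely many indecomposables map nontrivially into $N$. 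To rule out infinite representation type I would use that $H$ has finitely many simple modules: the socle of each of the infinitely many indecomposables contains one of them, so by pigeonhole some simple $S$ is a socle constituent of infinitely many indecomposables $Z$, and injectivity of $I_S$ gives $\mathrm{Hom}_H(Z,I_S)\neq0$ for all of them; then $[I_S]$ is not a finite combination of the $\delta$'s, contradicting that they span.

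Finally (5) reduces to (4) via the behaviour of $\varphi$ on the $\delta$'s: almost split sequences are exact, so $\varphi(\delta_{[Z]})=0$ for $Z$ non-projective, while $\varphi(\delta_{[Z]})=[\mathrm{top}Z]$ for $Z$ projective indecomposable, and $Z\mapsto\mathrm{top}Z$ is a bijection from projective indecomposables to simples. Hence $\varphi$ sends $\{\delta_{[Z]}:Z\text{ projective}\}$ to the basis of $G_0(H)$ and annihilates the non-projective $\delta$'s, yielding $r(H)=\ker\varphi\oplus\mathrm{span}\{\delta_{[Z]}:Z\text{ projective}\}$. Under finite representation type, (4) makes all $\delta$'s a basis and an elementary splitting argument makes the non-projective ones a basis of $\ker\varphi$; conversely, adjoining the finitely many projective $\delta$'s to a basis of $\ker\varphi$ exhibits all $\delta$'s as a basis of $r(H)$, so (4) returns finite representation type. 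The hard part throughout is really the pair (1) and the reverse direction of (4): the former anchors the whole dual-basis formalism, and the latter is the one place where finiteness of $\mathrm{ind}(H)$ must be produced from the algebra's structure rather than manipulated formally.
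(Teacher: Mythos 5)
Your proof is correct, and it is essentially the paper's own approach: the paper does not prove this lemma but cites \cite[ChVI, Proposition 4.1, Theorems 4.3 and 4.4]{ARS}, and your argument is the standard one given there --- the cokernel computation $\langle[W],\delta_{[Z]}\rangle_1=\dim_{\kk}C$ via the right almost split property (including the $\mathrm{rad}Z\hookrightarrow Z$ case for $Z$ projective), the formal dual-basis manipulations for (2), (3) and the unimodular-matrix step in (4), and the splitting $r(H)=\ker\varphi\oplus\mathbb{Z}\{\delta_{[Z]}:Z\ \text{projective}\}$ for (5). The only cosmetic difference is in the converse of (4): the classical argument observes that every indecomposable $Z$ maps nontrivially into the injective cogenerator $\bigoplus_S I_S$ (since $\mathrm{Hom}_H(Z,D H)\cong DZ\neq0$) and then applies pigeonhole over the finitely many $I_S$, which is your socle-pigeonhole argument in slightly different clothing.
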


\begin{rem}\label{rem2016} It follows from Lemma \ref{gg15} (2) that the form $\langle-,-\rangle_1$ is non-degenerate in the sense that given $0\neq x\in r(H)$, there is $y\in r(H)$ such that $\langle x,y\rangle_1\neq0$. If $H$ is of finite representation type, it can be seen from Lemma \ref{gg15} that the set $\{[M],\delta_{[M]}\mid [M]\in\text{ind}(H)\}$ forms a pair of dual bases of $r(H)$ with respect to the form $\langle-,-\rangle_1$. In this case, any $x$ in $r(H)$ can be written as follows: $x=\sum_{[M]\in\textrm{ind}(H)}\langle[M],x\rangle_1\delta_{[M]}.$
\end{rem}

We use the non-degeneracy of the form $\langle-,-\rangle_1$ to give an equivalent condition for $H$ to be of finite representation type.
\begin{prop}
The Hopf algebra $H$ is of finite representation type if and only if for any indecomposable module $X$, there are only finitely many indecomposable modules $M$ such that $\text{Hom}_H(M,X)\neq0$.
\end{prop}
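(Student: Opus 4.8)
The forward implication is immediate: if $H$ is of finite representation type then $\mathrm{ind}(H)$ is a finite set, so a fortiori for every indecomposable $X$ only finitely many indecomposable $M$ can satisfy $\textrm{Hom}_H(M,X)\neq 0$. All the content is in the converse, so the plan is to deduce finiteness of $\mathrm{ind}(H)$ from the hypothesis. The route suggested by the surrounding bilinear-form setup is to show that each class $[X]$ lies in the $\mathbb{Z}$-span of $\{\delta_{[M]}\}$, and then invoke Lemma \ref{gg15}(3),(4): the $\delta_{[M]}$ are always independent, so once they also span they form a basis, which forces finite representation type. Concretely, for a fixed indecomposable $X$ the hypothesis says $\langle[M],[X]\rangle_1=\dim_{\kk}\textrm{Hom}_H(M,X)$ is nonzero for only finitely many indecomposable $M$, so the element $y_X:=\sum_{[M]\in\mathrm{ind}(H)}\langle[M],[X]\rangle_1\,\delta_{[M]}$ is a genuine (finite) element of $r(H)$, and the goal becomes to prove $y_X=[X]$.

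To verify that identity I would pair on the left with an arbitrary indecomposable $[L]$ and use the orthogonality in Lemma \ref{gg15}(1):
\begin{align*}
\langle[L],y_X\rangle_1
&=\sum_{[M]}\langle[M],[X]\rangle_1\,\langle[L],\delta_{[M]}\rangle_1
=\langle[L],[X]\rangle_1 .
\end{align*}
Thus $\langle[L],y_X-[X]\rangle_1=0$ for every indecomposable $[L]$, hence for every element of $r(H)$ in the left slot. The main obstacle is exactly the final step, passing from ``$y_X-[X]$ is annihilated by all of $r(H)$ on the left'' to ``$y_X=[X]$'': the excerpt records only the \emph{left} non-degeneracy of $\langle-,-\rangle_1$ (Remark \ref{rem2016} and Lemma \ref{gg15}(2)), whereas here I need \emph{right} non-degeneracy. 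This is a genuine issue, because the two slots are not symmetric; indeed, expanding an almost split sequence shows $\langle\delta_{[M]},[N]\rangle_1=\delta_{\tau(M),N}$ rather than $\delta_{[M],[N]}$, so the $\delta_{[M]}$ are a right-hand dual basis but only a $\tau$-shifted left-hand one.

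I would therefore supply the missing non-degeneracy by a direct homological argument, which in fact proves the converse outright without the form. Since $H$ is finite dimensional there are only finitely many indecomposable injective modules $I_1,\dots,I_k$, namely the injective envelopes of the finitely many simple $H$-modules, and each $I_j$ is indecomposable. For an arbitrary indecomposable $M$, its injective envelope $I_M$ is a finite direct sum of copies of the $I_j$, and the canonical embedding $M\hookrightarrow I_M$ is nonzero; composing it with the projection onto a suitable summand produces an index $j$ with $\textrm{Hom}_H(M,I_j)\neq0$. Hence
\begin{align*}
\mathrm{ind}(H)\subseteq\bigcup_{j=1}^{k}\bigl\{[M]\in\mathrm{ind}(H):\textrm{Hom}_H(M,I_j)\neq0\bigr\}.
\end{align*}
Applying the hypothesis to each indecomposable $X=I_j$ makes every set in this finite union finite, so $\mathrm{ind}(H)$ is finite and $H$ is of finite representation type. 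I would present this injective-envelope argument as the actual proof, and regard the computation with $y_X$ and the form $\langle-,-\rangle_1$ as motivation linking the statement to the dual-basis picture of Lemma \ref{gg15}; if one prefers to stay inside $r(H)$, this same finiteness is precisely what is needed to conclude $y_X=[X]$ and hence that $\{\delta_{[M]}\}$ is a basis.
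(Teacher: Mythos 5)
Your proposal is correct, and your final argument takes a genuinely different route from the paper's. The paper proves the converse exactly along the lines of your $y_X$ computation: it forms the finite sum $\sum_{[M]}\dim_{\kk}\textrm{Hom}_H(M,X)\,\delta_{[M]}$, pairs an arbitrary indecomposable in the left slot against $[X]-y_X$, concludes $[X]=y_X$ ``by the non-degeneracy of the form $\langle-,-\rangle_1$,'' and then invokes Lemma \ref{gg15} (4). Your observation about the slots is accurate: as written, Remark \ref{rem2016} records non-degeneracy in the left argument, while this step needs it in the right argument, and your formula $\langle\delta_{[M]},[N]\rangle_1=\delta_{[\tau M],[N]}$ (for $M$ non-projective) correctly shows the two slots are genuinely asymmetric. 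However, the gap is smaller than your write-up suggests: since $\textrm{Hom}_H(X,Y)\cong\textrm{Hom}_H(Y^*,X^*)$ (Lemma \ref{glem4}, as the paper itself uses in Lemma \ref{gg16}) and the operator $*$ is bijective on $r(H)$ because $S^2$ has finite order, right non-degeneracy follows in one line from the left version ($\langle y,x\rangle_1=0$ for all $y$ gives $\langle x^*,z\rangle_1=0$ for all $z$, hence $x^*=0$, hence $x=0$), so the paper's route closes without needing a replacement argument. Your injective-envelope proof is nevertheless valid and strictly more elementary: there are finitely many indecomposable injectives $I_1,\dots,I_k$ (envelopes of the simples), every indecomposable $M$ embeds in $I_M\cong\bigoplus_t I(S_t)$ with some projection composite nonzero, and applying the hypothesis to each $X=I_j$ covers $\textrm{ind}(H)$ by finitely many finite sets; one could even shorten it further by using simple modules, since every indecomposable $M$ has a simple quotient and hence $\textrm{Hom}_H(M,S)\neq0$ for some simple $S$. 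What the paper's approach buys is the Green-ring payoff itself, namely that $\{\delta_{[M]}\}$ spans and so is a $\mathbb{Z}$-basis, a structure exploited repeatedly in later sections; what yours buys is self-containedness, avoiding the bilinear form and almost split sequences entirely.
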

\proof For any indecomposable module $X$, if there are only finitely many indecomposable modules $M$ such that $\text{Hom}_H(M,X)\neq0$, then the sum $\sum_{[M]\in\textrm{ind}(H)}\dim_{\kk}\text{Hom}_H(M,X)\delta_{[M]}$ is a finite sum. We have the following:
\begin{linenomath}
\begin{align*}&\ \ \ \ \langle[M],[X]-\sum_{[M]\in\textrm{ind}(H)}\dim_{\kk}\text{Hom}_H(M,X)\delta_{[M]}\rangle_1\\
&=\langle[M],[X]\rangle_1-\dim_{\kk}\text{Hom}_H(M,X)\\
&=0.
\end{align*}
\end{linenomath}
This implies that $[X]=\sum_{[M]\in\textrm{ind}(H)}\dim_{\kk}\text{Hom}_H(M,X)\delta_{[M]}$ by the non-degeneracy of the form $\langle-,-\rangle_1$. Thus, $\{\delta_{[M]}\mid [M]\in\textrm{ind}(H)\}$ is a basis of $r(H)$, and hence $H$ is of finite representation type by Lemma \ref{gg15} (4). \qed

Let $\mathcal {P}(X,Y)$ be the space of morphisms from $X$ to $Y$ which factor through a projective module. By a similar way to \cite{Be}, we define another bilinear form on $r(H)$ as follows:
\begin{linenomath}
\begin{equation*}
\langle[X],[Y]\rangle_2:=\dim_{\kk} \mathcal {P}(X,Y).
\end{equation*}
\end{linenomath}

Let $\ast$ denote the duality operator of $r(H)$ induced by the duality functor: $[X]^*=[X^*]$. Then $\ast$ is an anti-automorphism of $r(H)$. Obviously, if $S^2$ of $H$ is inner, then $\ast$ is an involution \cite{Lo}. The forms $\langle-,-\rangle_1$ and $\langle-,-\rangle_2$ both have the following properties.

\begin{prop}\label{prop22.2}Let $X$, $Y$ and $Z$ be $H$-modules.
\begin{enumerate}
  \item $\langle[X][Y],[Z]\rangle_1=\langle[X],[Z][Y]^*\rangle_1$ and $\langle[X],[Y][Z]\rangle_1=\langle[Y]^*[X],[Z]\rangle_1$.
  \item $\langle[X][Y],[Z]\rangle_2=\langle[X],[Z][Y]^*\rangle_2$ and $\langle[X],[Y][Z]\rangle_2=\langle[Y]^*[X],[Z]\rangle_2$.
\end{enumerate}
\end{prop}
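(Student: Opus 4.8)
The plan is to prove Proposition \ref{prop22.2} by exhibiting the canonical isomorphisms of Lemma \ref{glem4} and showing that they restrict to the subspaces $\mathcal{P}(-,-)$, so that both statements (1) and (2) follow from a single dimension count. For part (1), the first identity $\langle[X][Y],[Z]\rangle_1=\langle[X],[Z][Y]^*\rangle_1$ is exactly the assertion that $\dim_\kk\textrm{Hom}_H(X\otimes Y,Z)=\dim_\kk\textrm{Hom}_H(X,Z\otimes Y^*)$, which is immediate from the functorial isomorphism $\Phi_{X,Y,Z}$. Likewise the second identity $\langle[X],[Y][Z]\rangle_1=\langle[Y]^*[X],[Z]\rangle_1$ is the statement that $\Psi_{X,Y,Z}:\textrm{Hom}_H(X,Y\otimes Z)\to\textrm{Hom}_H(Y^*\otimes X,Z)$ is an isomorphism. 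Since both forms are defined additively on direct sums and both sides are $\mathbb{Z}$-bilinear, it suffices to verify the equalities on isomorphism classes of modules, which is precisely what the two canonical isomorphisms give.

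For part (2), the key observation is that the maps $\Phi_{X,Y,Z}$ and $\Psi_{X,Y,Z}$ are \emph{natural} in all three variables, and in particular they are compatible with composition through projective modules. Concretely, I would show that $\Phi_{X,Y,Z}$ carries $\mathcal{P}(X\otimes Y,Z)$ bijectively onto $\mathcal{P}(X,Z\otimes Y^*)$, and similarly that $\Psi_{X,Y,Z}$ carries $\mathcal{P}(X,Y\otimes Z)$ onto $\mathcal{P}(Y^*\otimes X,Z)$. The cleanest route is to recall that $\Phi_{X,Y,Z}(\alpha)=(\alpha\otimes id_{Y^*})\circ(id_X\otimes\textrm{coev}_Y)$: if $\alpha$ factors as $X\otimes Y\to P\to Z$ with $P$ projective, then $\Phi_{X,Y,Z}(\alpha)$ factors through $P\otimes Y^*$, which is projective because the tensor product of a projective module with any module is projective over a Hopf algebra. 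Conversely, applying the same argument to the inverse map $\Phi^{-1}_{X,Y,Z}(\beta)=(id_Z\otimes\textrm{ev}_Y)\circ(\beta\otimes id_Y)$ shows that if $\beta$ factors through a projective then so does $\Phi^{-1}_{X,Y,Z}(\beta)$, giving the reverse inclusion. The argument for $\Psi$ is symmetric, using that $\textrm{ev}$ and $\textrm{coev}$ are morphisms and that tensoring by a projective preserves projectivity.

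The main obstacle, and the one point requiring genuine care, is the projectivity-preservation fact: that $P\otimes M$ (and $M\otimes P$) is projective whenever $P$ is projective and $M$ is arbitrary. This is standard for finite dimensional Hopf algebras — it follows from the fact that $H$ is a Frobenius algebra, so projective and injective modules coincide, and the tensor product of a free module with any module is free, hence the tensor product of a projective with any module is a summand of a free module and therefore projective. Once this is in hand, the bijection $\Phi_{X,Y,Z}(\mathcal{P}(X\otimes Y,Z))=\mathcal{P}(X,Z\otimes Y^*)$ is a formal consequence, and taking dimensions yields $\langle[X][Y],[Z]\rangle_2=\langle[X],[Z][Y]^*\rangle_2$. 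I would then record the second identity of (2) by the analogous restriction of $\Psi$, completing the proof.

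I expect the write-up to be short: the content is entirely the naturality of $\Phi$ and $\Psi$ together with the closure of the projective ideal under tensoring, and no delicate estimates or case analysis are needed. If one wants to avoid invoking the explicit formulas for $\Phi$ and $\Phi^{-1}$, an alternative is to argue abstractly that $-\otimes Y$ and $-\otimes Y^*$ are exact functors sending projectives to projectives, so that the adjunction isomorphism $\Phi$ automatically matches the subspaces of maps factoring through projectives; but the explicit-formula approach is more self-contained given what the excerpt has already set up.
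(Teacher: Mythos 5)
Your proposal is correct and takes essentially the same route as the paper: part (1) is read off directly from the canonical isomorphisms $\Phi_{X,Y,Z}$ and $\Psi_{X,Y,Z}$ of Lemma \ref{glem4}, and part (2) is proved, exactly as in the paper, by using the explicit formulas for $\Phi_{X,Y,Z}$ and $\Phi^{-1}_{X,Y,Z}$ to show that a map factoring through a projective $P$ is carried to one factoring through $P\otimes Y^*$ (respectively $P\otimes Y$), so that $\Phi_{X,Y,Z}(\mathcal{P}(X\otimes Y,Z))=\mathcal{P}(X,Z\otimes Y^*)$ and similarly for $\Psi$. Your added justification that tensoring with a projective module yields a projective module merely makes explicit a standard fact the paper uses silently.
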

\proof (1) It follows from Lemma \ref{glem4}.

(2) If $\alpha\in\text{Hom}_H(X\otimes Y,Z)$ factors through a projective module $P$, then $\Phi_{X,Y,Z}(\alpha)$ factors through the projective module $P\otimes Y^*$ by Lemma \ref{glem4} (1).
Thus, $\Phi_{X,Y,Z}(\mathcal {P}(X\otimes Y,Z))\subseteq\mathcal {P}(X,Z\otimes Y^*).$ Conversely, for any $\beta\in\mathcal {P}(X,Z\otimes Y^*)$ which factors through a projective module $P$, by Lemma \ref{glem4} (1), the map $\Phi^{-1}_{X,Y,Z}(\beta)$ factors through the projective module $P\otimes Y$. We obtain that $\Phi_{X,Y,Z}(\mathcal {P}(X\otimes Y,Z))=\mathcal {P}(X,Z\otimes Y^*).$ Similarly, $\Psi_{X,Y,Z}(\mathcal {P}(X,Y\otimes Z))=\mathcal {P}(Y^*\otimes X,Z).$ We are done.
\qed

Let $\Omega$ and $\Omega^{-1}$ denote the syzygy functor and cosyzygy functor of $H$-mod respectively. Namely, $\Omega M$ is the kernel of the projective cover $P_M\rightarrow M$, and $\Omega^{-1}M$ is the cokernel of the injective envelope $M\rightarrow I_M$. Denote by $\delta_{[M]}^*$ the image of $\delta_{[M]}$ under the duality operator $\ast$ of $r(H)$.
The following is a generalization of \cite[Proposition 2.1]{Be} to the case of the Green ring $r(H)$.
We omit the proof since it is similar to the proof of \cite[Proposition 2.1]{Be}.

\begin{lem}\label{prop1515}Let $M$ be an indecomposable $H$-module and $P_{\kk}$ the projective cover of the trivial module $\kk$. The following hold in $r(H)$:
\begin{enumerate}
  \item $([I_M]-[\Omega^{-1}M])\delta_{[P_{\kk}]}=\delta_{[P_{\kk}]}([I_M]-[\Omega^{-1}M])=[M]$ and $([P_M]-[\Omega M])\delta^*_{[P_{\kk}]}=\delta^*_{[P_{\kk}]}([P_M]-[\Omega M])=[M]$. Moreover, $\delta_{[P_{\kk}]}\delta^*_{[P_{\kk}]}=\delta^*_{[P_{\kk}]}\delta_{[P_{\kk}]}=1$.
  \item $[M]\delta_{[P_{\kk}]}=\delta_{[P_{\kk}]}[M]=[P_M]-[\Omega M]$ and $[M]\delta^*_{[P_{\kk}]}=\delta^*_{[P_{\kk}]}[M]=[I_M]-[\Omega^{-1}M]$. Thus, $\delta_{[P_{\kk}]}$ and $\delta^*_{[P_{\kk}]}$ are both central units of $r(H)$.
\end{enumerate}
\end{lem}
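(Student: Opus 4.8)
The plan is to deduce the whole lemma from two ``Schanuel reformulations'' of right multiplication by $\delta_{[P_{\kk}]}$ and by $\delta^{*}_{[P_{\kk}]}$, and then to read off Parts (1), (2) and the unit relations as formal consequences. Throughout I use only that $H$ is Frobenius---so an $H$-module is projective if and only if it is injective, and the tensor product (over $\kk$) of a projective (resp. injective) module with an arbitrary module is again projective (resp. injective)---together with Schanuel's lemma \cite{ARS} and its dual. Since $\mathrm{rad}\,P_{\kk}=\Omega\kk$, the projective cover sequence reads $0\to\Omega\kk\to P_{\kk}\to\kk\to0$, so $\delta_{[P_{\kk}]}=[P_{\kk}]-[\Omega\kk]$.

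First I would prove the reformulation: \emph{for every short exact sequence $0\to K\to Q\to N\to0$ with $Q$ projective one has $[N]\delta_{[P_{\kk}]}=[Q]-[K]$ in $r(H)$}. Indeed, tensoring the sequence above by $N$ gives the exact sequence $0\to N\otimes\Omega\kk\to N\otimes P_{\kk}\to N\to0$ whose middle term is projective, so Schanuel's lemma applied to it and to $0\to K\to Q\to N\to0$ yields $(N\otimes\Omega\kk)\oplus Q\cong K\oplus(N\otimes P_{\kk})$; reading this in $r(H)$ gives $[N]\delta_{[P_{\kk}]}=[N\otimes P_{\kk}]-[N\otimes\Omega\kk]=[Q]-[K]$. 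Tensoring on the other side shows $\delta_{[P_{\kk}]}[N]$ has the same value, so $\delta_{[P_{\kk}]}$ is central. Applying the anti-automorphism $\ast$ to this statement (and using $P_N^{*}\cong I_{N^{*}}$ and $(\Omega N)^{*}\cong\Omega^{-1}(N^{*})$) produces the dual reformulation: \emph{for every short exact sequence $0\to N\to Q\to C\to0$ with $Q$ injective one has $[N]\delta^{*}_{[P_{\kk}]}=[Q]-[C]$}, with $\delta^{*}_{[P_{\kk}]}$ central as well, being the image of a central element under an anti-automorphism. Part (2) is then immediate: taking $Q=P_M$, $K=\Omega M$ gives $[M]\delta_{[P_{\kk}]}=\delta_{[P_{\kk}]}[M]=[P_M]-[\Omega M]$, and taking $Q=I_M$, $C=\Omega^{-1}M$ gives $[M]\delta^{*}_{[P_{\kk}]}=\delta^{*}_{[P_{\kk}]}[M]=[I_M]-[\Omega^{-1}M]$.

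Next I would establish the unit relations. Using Part (2) and then the dual reformulation,
\[[M]\,\delta_{[P_{\kk}]}\delta^{*}_{[P_{\kk}]}=([P_M]-[\Omega M])\,\delta^{*}_{[P_{\kk}]}=[P_M]\delta^{*}_{[P_{\kk}]}-[\Omega M]\delta^{*}_{[P_{\kk}]}.\]
Here $[P_M]\delta^{*}_{[P_{\kk}]}=[P_M]$ because $P_M$ is injective (apply the dual reformulation to $0\to P_M\xrightarrow{\mathrm{id}}P_M\to0\to0$), while applying it to $0\to\Omega M\to P_M\to M\to0$, whose middle term $P_M$ is injective, gives $[\Omega M]\delta^{*}_{[P_{\kk}]}=[P_M]-[M]$. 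Hence $[M]\delta_{[P_{\kk}]}\delta^{*}_{[P_{\kk}]}=[P_M]-([P_M]-[M])=[M]$ for every indecomposable $M$; as the classes $[M]$ form a $\mathbb{Z}$-basis of $r(H)$ this forces $\delta_{[P_{\kk}]}\delta^{*}_{[P_{\kk}]}=1$. The symmetric computation $[M]\delta^{*}_{[P_{\kk}]}\delta_{[P_{\kk}]}=([I_M]-[\Omega^{-1}M])\delta_{[P_{\kk}]}=[I_M]-([I_M]-[M])=[M]$---where $[\Omega^{-1}M]\delta_{[P_{\kk}]}=[I_M]-[M]$ is the reformulation applied to $0\to M\to I_M\to\Omega^{-1}M\to0$ with its injective (hence projective) middle term---gives $\delta^{*}_{[P_{\kk}]}\delta_{[P_{\kk}]}=1$. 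Finally Part (1) drops out formally from Part (2) and these relations: $([I_M]-[\Omega^{-1}M])\delta_{[P_{\kk}]}=([M]\delta^{*}_{[P_{\kk}]})\delta_{[P_{\kk}]}=[M](\delta^{*}_{[P_{\kk}]}\delta_{[P_{\kk}]})=[M]$, and likewise $([P_M]-[\Omega M])\delta^{*}_{[P_{\kk}]}=[M]$, the left-hand versions following from centrality.

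The routine parts are the diagram-free Schanuel manipulations; the one place that needs care is the term $[\Omega M]\delta^{*}_{[P_{\kk}]}$. It is tempting to evaluate it via $\Omega^{-1}\Omega M\cong M$, but that identity holds only after discarding projective summands, and $\Omega M$ need not be indecomposable. The main point of the proposal is that the dual reformulation circumvents this entirely: comparing the injective copresentation of $\Omega M$ with the \emph{already injective} module $P_M$ yields $[\Omega M]\delta^{*}_{[P_{\kk}]}=[P_M]-[M]$ directly in $r(H)$, with no need to identify $I_{\Omega M}$ and $\Omega^{-1}\Omega M$ individually. The only genuine input beyond formal algebra is the Frobenius property of $H$---projective $=$ injective, and stability of these classes under tensoring---which is exactly what makes both reformulations available.
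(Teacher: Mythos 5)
Your proposal is correct and takes essentially the same route as the paper, whose proof is omitted with a pointer to \cite[Proposition 2.1]{Be}: tensor the projective cover sequence $0\to\Omega\kk\to P_{\kk}\to\kk\to0$ with $M$ on either side, compare with the minimal (co)presentations via Schanuel's lemma and its dual, with the Frobenius property of $H$ (projective $=$ injective, stability under $\otimes$) supplying the rest. The one step worth making explicit is the transport through $\ast$: applying $\ast$ to $[N]\delta_{[P_{\kk}]}=[P_N]-[\Omega N]$ gives $\delta^*_{[P_{\kk}]}[N^*]=[I_{N^*}]-[\Omega^{-1}(N^*)]$, i.e.\ the dual identity first only for modules of the form $N^*$, and one then uses that every module is such a dual because the antipode is bijective and $(-)^*$ is an anti-equivalence --- the same fact the paper invokes elsewhere (e.g.\ in the proof of Proposition \ref{gg07}).
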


The following explores a relation between the forms $\langle-,-\rangle_1$ and $\langle-,-\rangle_2$. We refer to \cite[Corollary 2.3]{Be} for a similar result.

\begin{prop}\label{prop2.6} Let $X$ and $Y$ be two $H$-modules.
\begin{enumerate}
  \item $\langle[X],[Y]\rangle_2$ is equal to the multiplicity of $P_{\kk}$ in a direct sum decomposition of $Y^*\otimes X$.
  \item $\langle[X],[Y]\rangle_2=\langle[X],[Y]\delta_{[P_{\kk}]}\rangle_1=\langle[X]\delta^*_{[P_{\kk}]},[Y]\rangle_1$.
  \item $\langle[X],[Y]\rangle_1=\langle[X]\delta_{[P_{\kk}]},[Y]\rangle_2=\langle[X],[Y]\delta^*_{[P_{\kk}]}\rangle_2$.
\end{enumerate}
\end{prop}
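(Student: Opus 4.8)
The plan is to prove Proposition~\ref{prop2.6} by establishing Part~(1) first, since the dimension-counting identity it provides is the conceptual heart of the statement, and then deriving Parts~(2) and~(3) as formal consequences using the canonical isomorphisms of Lemma~\ref{glem4} together with the central-unit identities of Lemma~\ref{prop1515}.

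\textbf{Part (1).} First I would reduce $\mathcal{P}(X,Y)$ to a Hom-space involving the trivial module. The key observation is that a morphism $X\to Y$ factors through a projective module precisely when it factors through the projective cover $P_Y$ (equivalently through the injective envelope $I_X$, using that projectives and injectives coincide for a finite dimensional Hopf algebra). The space $\mathcal{P}(X,Y)$ is therefore the image of the composition map $\text{Hom}_H(P,Y)\otimes\text{Hom}_H(X,P)\to\text{Hom}_H(X,Y)$ summed over projectives $P$. I would next use the adjunction isomorphism $\Psi_{\kk,Y^*,\cdot}$ or $\Phi$ from Lemma~\ref{glem4} to identify $\text{Hom}_H(X,Y)\cong\text{Hom}_H(\kk,Y\otimes X^*)\cong\text{Hom}_H(Y^*\otimes X,\kk)$, under which the subspace of maps factoring through a projective corresponds to maps $Y^*\otimes X\to\kk$ that factor through a projective, i.e.\ through $P_{\kk}$. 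Since $\text{Hom}_H(P_{\kk},\kk)$ is one-dimensional and $\mathcal{P}(Y^*\otimes X,\kk)$ is spanned by the compositions through each copy of $P_{\kk}$ in a decomposition of $Y^*\otimes X$, the dimension $\langle[X],[Y]\rangle_2=\dim_{\kk}\mathcal{P}(X,Y)$ equals the multiplicity of $P_{\kk}$ as a direct summand of $Y^*\otimes X$. This is the same structure as \cite[Corollary 2.3]{Be}, so I would model the argument on that reference.

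\textbf{Parts (2) and (3).} These should follow by pure bookkeeping. Writing $[Y^*\otimes X]=[Y]^*[X]$ in $r(H)$, the multiplicity of $P_{\kk}$ in $Y^*\otimes X$ is $\langle[P_{\kk}],\delta_{[Y]^*[X]}\rangle$-type data; more directly, by Lemma~\ref{prop1515}(2) we have $[M]\delta_{[P_{\kk}]}=[P_M]-[\Omega M]$, and applying $\langle-,-\rangle_1$ lets one detect the projective-summand multiplicity. Concretely I would compute $\langle[X],[Y]\delta_{[P_{\kk}]}\rangle_1$ using the associativity relation of Proposition~\ref{prop22.2}(1) to move factors across the form, rewrite it as $\langle\kk,\,\cdot\,\rangle_1$ where the multiplicity of $P_{\kk}$ becomes visible, and match it against Part~(1). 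The identity $\langle[X],[Y]\rangle_2=\langle[X]\delta^*_{[P_{\kk}]},[Y]\rangle_1$ follows symmetrically using $\delta^*_{[P_{\kk}]}$ and the fact that $\delta_{[P_{\kk}]}\delta^*_{[P_{\kk}]}=1$. Finally Part~(3) is just Part~(2) inverted: multiplying through by the central unit $\delta^*_{[P_{\kk}]}$ (resp.\ $\delta_{[P_{\kk}]}$) and invoking $\delta_{[P_{\kk}]}\delta^*_{[P_{\kk}]}=1$ turns $\langle[X],[Y]\delta_{[P_{\kk}]}\rangle_1$ into $\langle[X]\delta_{[P_{\kk}]},[Y]\rangle_2$ after re-expressing $\langle-,-\rangle_1$ in terms of $\langle-,-\rangle_2$.

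\textbf{Main obstacle.} I expect the delicate point to be Part~(1): justifying cleanly that $\dim_{\kk}\mathcal{P}(X,Y)$ equals the $P_{\kk}$-multiplicity rather than the projective-cover multiplicity of $X$ or $Y$. The subtlety is that a map factoring through \emph{some} projective need not factor through $P_Y$ alone, so one must argue that the relevant data is captured exactly by how many copies of the trivial module's projective cover $P_{\kk}$ split off $Y^*\otimes X$ after transporting along the adjunction. The cleanest route is to work on the $\kk$-side: since $\text{End}_H(\kk)\cong\kk$ and $P_{\kk}$ is the projective cover of $\kk$, every map $Y^*\otimes X\to\kk$ factoring through a projective factors through $P_{\kk}$, and the dimension of this space is governed by $\text{Hom}_H(Y^*\otimes X,P_{\kk})$ modulo the radical, which counts exactly the $P_{\kk}$-summands. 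Once Part~(1) is secured, Parts~(2)--(3) are formal and carry no real difficulty.
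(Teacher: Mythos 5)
Your proposal is correct and follows essentially the same route as the paper: Part (1) is obtained by transporting $\mathcal{P}(X,Y)$ to $\mathcal{P}(Y^*\otimes X,\kk)$ via Proposition \ref{prop22.2}(2) and noting that any map to $\kk$ factoring through a projective factors through $P_{\kk}$, so that the dimension counts the $P_{\kk}$-summands of $Y^*\otimes X$, and Parts (2)--(3) then follow formally from Lemma \ref{gg15}(1), the adjunction identities of Proposition \ref{prop22.2}(1), and $\delta_{[P_{\kk}]}\delta^*_{[P_{\kk}]}=\delta^*_{[P_{\kk}]}\delta_{[P_{\kk}]}=1$ from Lemma \ref{prop1515}. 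The only differences are cosmetic: you count the multiplicity as $\dim_{\kk}\mathrm{Hom}_H(Y^*\otimes X,P_{\kk})$ modulo maps into $\mathrm{rad}\,P_{\kk}$ where the paper argues that each nonzero factorization forces a split $P_{\kk}$-summand, and your passing phrase ``$\langle[P_{\kk}],\delta_{[Y]^*[X]}\rangle$-type data'' is ill-formed ($\delta_{[Z]}$ is defined only for indecomposable $Z$), but the computation you actually carry out, $\langle[X],[Y]\delta_{[P_{\kk}]}\rangle_1=\langle[Y]^*[X],\delta_{[P_{\kk}]}\rangle_1$ matched against Part (1), is exactly the paper's.
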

\proof (1) For any non-zero morphism $\alpha\in\mathcal {P}(Y^*\otimes X,\kk)$, if $\alpha$ factors through an indecomposable projective module $P$, then $\alpha=\beta\circ\gamma$ for some $\beta:P\rightarrow\kk$ and $\gamma: Y^*\otimes X\rightarrow P$. Since $\beta$ is surjective, $P$ is the projective cover of $\kk$ and hence $P\cong P_{\kk}$. Note that rad$P_{\kk}$ is the unique maximal submodule of $P_{\kk}$. The image of the morphism $\gamma$ is either contained in rad$P_{\kk}$ or equal to $P_{\kk}$. For the former case, $\alpha=\beta\circ\gamma=0$, a contradiction. Thus, the morphism $\gamma$ is surjective, and hence $P_{\kk}$ is a direct summand of $Y^*\otimes X$. Now, if $\alpha$ factors through a projective module $P$ and $P\cong\bigoplus_i P_i$ for some indecomposable projective modules $P_i$. Then $\alpha=\sum_i\beta_i\circ\gamma_i$ for some $\beta_i:P_i\rightarrow\kk$ and $\gamma_i: Y^*\otimes X\rightarrow P_i$. We have proved that $\beta_i\circ\gamma_i\neq0$ if and only if $P_i\cong P_{\kk}$. It follows that $\dim_{\kk}\mathcal {P}(Y^*\otimes X,\kk)$ is equal to the multiplicity of $P_{\kk}$ in a direct sum decomposition of $Y^*\otimes X$, while the former is equal to $\dim_{\kk}\mathcal {P}(X,Y)$ by Proposition \ref{prop22.2} (2).

(2) It follows from Part (1) that $\langle[X],[Y]\rangle_2=\langle[Y]^*[X],\delta_{[P_{\kk}]}\rangle_1$. By Proposition \ref{prop22.2}, we have \begin{linenomath}
\begin{align*}\langle[Y]^*[X],\delta_{[P_{\kk}]}\rangle_1&=\langle[X],[Y]\delta_{[P_{\kk}]}\rangle_1\\
&=\langle[X]\delta^*_{[P_{\kk}]}\delta_{[P_{\kk}]},[Y]\delta_{[P_{\kk}]}\rangle_1\\
&=\langle[X]\delta^*_{[P_{\kk}]},[Y]\delta_{[P_{\kk}]}\delta^*_{[P_{\kk}]}\rangle_1\\&=\langle[X]\delta^*_{[P_{\kk}]},[Y]\rangle_1.
\end{align*}
\end{linenomath}

(3) It follows from Part (2) and the fact that $\delta_{[P_{\kk}]}\delta^*_{[P_{\kk}]}=\delta^*_{[P_{\kk}]}\delta_{[P_{\kk}]}=1$.
\qed

\begin{cor}\label{cor2015}
Let $X$ be an indecomposable $H$-module and $V$ a simple $H$-module. Then $\langle[X],[V]\rangle_2=\delta_{[X],[P_V]}$.
\end{cor}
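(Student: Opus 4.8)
The plan is to use Proposition \ref{prop2.6} (1), which identifies $\langle[X],[V]\rangle_2$ with the multiplicity of $P_{\kk}$ as a direct summand of $V^*\otimes X$. Since $V$ is simple, $V^*$ is simple as well, so the problem reduces to determining when $P_{\kk}$ appears as a direct summand of the tensor product of a simple module with an indecomposable module. First I would observe that $\langle[X],[V]\rangle_2 = \langle[X],[V]\delta_{[P_{\kk}]}\rangle_1$ by Proposition \ref{prop2.6} (2), and then exploit Lemma \ref{prop1515} (2), which gives $[V]\delta_{[P_{\kk}]}=[P_V]-[\Omega V]$. Thus
\begin{linenomath}
$$\langle[X],[V]\rangle_2=\langle[X],[P_V]-[\Omega V]\rangle_1=\dim_{\kk}\text{Hom}_H(X,P_V)-\dim_{\kk}\text{Hom}_H(X,\Omega V).$$
\end{linenomath}
This reformulation is the cleanest route because it converts the assertion into a comparison of two Hom-space dimensions involving the projective cover $P_V$ of $V$ and the syzygy $\Omega V$.

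The key step is then to show this difference equals $\delta_{[X],[P_V]}$, i.e.\ it is $1$ when $X\cong P_V$ and $0$ otherwise. For the case $X\cong P_V$, I would use that $P_V$ is projective, so $\text{Hom}_H(P_V,\Omega V)$ computes using the projective cover sequence $0\to\Omega V\to P_V\to V\to 0$; applying $\text{Hom}_H(P_V,-)$ and noting projectivity of $P_V$ makes the sequence exact on the right, the difference $\dim_{\kk}\text{Hom}_H(P_V,P_V)-\dim_{\kk}\text{Hom}_H(P_V,\Omega V)$ collapses to $\dim_{\kk}\text{Hom}_H(P_V,V)$, which is $1$ since $V$ is simple and $P_V$ its projective cover. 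For $X\not\cong P_V$ indecomposable, I expect the difference to vanish: intuitively $\text{Hom}_H(X,-)$ applied to the short exact sequence above, together with the fact that any map $X\to V$ either lifts to $P_V$ or not, should force cancellation.

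Alternatively, and perhaps more transparently, I would argue directly from Proposition \ref{prop2.6} (1): $\langle[X],[V]\rangle_2$ is the multiplicity of $P_{\kk}$ in $V^*\otimes X$. Since $X$ is indecomposable, $V^*\otimes X$ is a tensor product whose summands I must analyze. Here the cleanest observation is that $P_{\kk}\mid V^*\otimes X$ forces $X$ to be projective (a projective summand of $V^*\otimes X$ with $V^*$ simple constrains $X$ heavily, since tensoring a projective with anything stays projective and the converse divisibility via duality isolates $X$); pinning down that the multiplicity is exactly $1$ and that $X\cong P_V$ is the unique indecomposable $X$ realizing it.

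The main obstacle will be handling the case $X \not\cong P_V$ cleanly — specifically, ruling out spurious contributions to the Hom-difference (or to the multiplicity of $P_{\kk}$) when $X$ is indecomposable but not equal to $P_V$, including the subtle subcase where $X$ is some \emph{other} indecomposable projective. I anticipate that the adjunction $\langle[X],[V]\rangle_2 = \langle[X]\delta^*_{[P_{\kk}]},[V]\rangle_1$ from Proposition \ref{prop2.6} (2), combined with the simplicity of $V$ (so that $\text{Hom}_H(-,V)$ detects only the top composition factor), is what finally forces the answer to be the Kronecker delta; the simplicity of $V$ is the essential ingredient that makes everything rigid, and I would lean on Lemma \ref{prop1515} to keep the bookkeeping of syzygies and projective covers under control.
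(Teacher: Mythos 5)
Your main route coincides with the paper's: Proposition \ref{prop2.6}(2) together with Lemma \ref{prop1515}(2) gives $\langle[X],[V]\rangle_2=\langle[X],[P_V]\rangle_1-\langle[X],[\Omega V]\rangle_1$, and your handling of the case $X\cong P_V$ (exactness of $\mathrm{Hom}_H(P_V,-)$ on the projective cover sequence, then $\dim_{\kk}\mathrm{Hom}_H(P_V,V)=1$) is complete and correct. The genuine gap is the case $X\not\cong P_V$, which you leave at the level of ``I expect the difference to vanish'' and ``should force cancellation'' --- but that vanishing \emph{is} the entire content of the Kronecker delta, and it does not follow from simplicity of $V$ by itself. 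The missing argument is short but essential: since $V$ is simple, $\Omega V=\mathrm{rad}\,P_V$, and left exactness of $\mathrm{Hom}_H(X,-)$ applied to $0\to\Omega V\to P_V\xrightarrow{\pi}V\to0$ shows the difference equals the dimension of the image of $\pi_*:\mathrm{Hom}_H(X,P_V)\to\mathrm{Hom}_H(X,V)$. If $\pi\circ g\neq0$ for some $g:X\to P_V$, then $\pi\circ g$ is surjective because $V$ is simple, so $\mathrm{im}\,g+\mathrm{rad}\,P_V=P_V$, whence $g$ is surjective by Nakayama's lemma; a surjection onto the projective $P_V$ splits, so $P_V\mid X$, and indecomposability of $X$ forces $X\cong P_V$. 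Hence the image is zero whenever $X\not\cong P_V$, which also disposes of your worried subcase of other projective indecomposables (there $\mathrm{Hom}_H(X,V)=0$ outright, since the top of $X$ is not $V$). This is the same style of argument the paper uses inside its proof of Proposition \ref{prop2.6}(1) (image of a map to $P_{\kk}$ lands in the radical or is everything).

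Your alternative sketch via Proposition \ref{prop2.6}(1) is circular as written: the assertion that $P_{\kk}\mid V^*\otimes X$ ``forces $X$ to be projective'' is essentially equivalent to the corollary being proved, and the parenthetical justification does not establish it. It can be made rigorous --- self-injectivity of the finite dimensional Hopf algebra $H$ plus the adjunction of Lemma \ref{glem4} yields $X\mid V^{**}\otimes P_{\kk}$, which is projective, and then $\dim_{\kk}\mathcal{P}(X,V)=\dim_{\kk}\mathrm{Hom}_H(X,V)=\delta_{[X],[P_V]}$ for projective indecomposable $X$ --- but you did not carry this out, so as it stands only your first route is salvageable, and it needs the Nakayama/splitting step above.
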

\proof It follows from Proposition \ref{prop2.6} that $\langle[X],[V]\rangle_2=\langle[X],[V]\delta_{[P_{\kk}]}\rangle_1=\langle[X],[P_V]\rangle_1-\langle[X],[\Omega V]\rangle_1=1$ if $X\cong P_V$, and 0 otherwise.\qed

\begin{rem}
Let $H$ be of finite representation type. It follows from Proposition \ref{prop2.6} (3) that the set $\{[M]\delta_{[P_{\kk}]},\delta_{[M]}\mid [M]\in\text{ind}(H)\}$ or $\{[M],\delta_{[M]}\delta^*_{[P_{\kk}]}\mid [M]\in\text{ind}(H)\}$ forms a pair of dual bases of $r(H)$ with respect to the form $\langle-,-\rangle_2$. Hence the form $\langle-,-\rangle_2$ is the same as $\langle-,-\rangle_1$ up to a unit. Namely, $\langle-,-\rangle_1=\langle-\delta_{[P_{\kk}]},-\rangle_2=\langle-,-\delta^*_{[P_{\kk}]}\rangle_2$.
\end{rem}

For any two $H$-modules $X$ and $Y$, we define \begin{linenomath}$$\langle[X],[Y]\rangle_3:=\langle[X],[Y]\rangle_1-\langle[X],[Y]\rangle_2.$$\end{linenomath} It follows from Proposition \ref{prop2.6} that \begin{linenomath}$$\langle[X],[Y]\rangle_3=\langle[X],[Y](1-\delta_{[P_{\kk}]})\rangle_1=\langle[X](1-\delta^*_{[P_{\kk}]}),[Y]\rangle_1.$$\end{linenomath} Moreover, we have the following result.

\begin{prop}\label{prop2016}
Let $X$ and $Y$ be two $H$-modules.
\begin{enumerate}
  \item If $X$ is indecomposable and projective, then $\langle[X],[Y]\rangle_3=0$.
  \item If $X$ is indecomposable and non-projective, then \begin{linenomath}$$\langle[X],[Y]\rangle_3=\langle[X],[Y]\rangle_1+\langle[\Omega^{-1}X],[Y]\rangle_1-\sum_{[V]}[Y:V]\langle[\Omega^{-1}X],[V]\rangle_1,$$\end{linenomath} where the sum $\sum_{[V]}$ runs over all non-isomorphic simple $H$-modules and $[Y:V]$ is the multiplicity of $V$ in the composition sequence of $Y$. In particular, $\langle[X],[Y]\rangle_3=\langle[X],[Y]\rangle_1$ if $Y$ is simple.
\end{enumerate}
\end{prop}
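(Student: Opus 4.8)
The plan is to handle the two parts separately: part (1) by a one-line factorization, and part (2) by feeding the identity $\langle[X],[Y]\rangle_3=\langle[X](1-\delta^*_{[P_{\kk}]}),[Y]\rangle_1$ (recorded just before the statement, coming from Proposition \ref{prop2.6}) into Lemma \ref{prop1515} and then reducing a projective Hom-space to composition multiplicities.

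For (1), I would note that when $X$ is indecomposable and projective, every $f\in\text{Hom}_H(X,Y)$ factors as $X\xrightarrow{id_X}X\xrightarrow{f}Y$ through the projective module $X$ itself; hence $\mathcal{P}(X,Y)=\text{Hom}_H(X,Y)$ and $\langle[X],[Y]\rangle_2=\langle[X],[Y]\rangle_1$, so that $\langle[X],[Y]\rangle_3=0$.

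For (2), first I would rewrite the form. By Lemma \ref{prop1515}(2) we have $[X]\delta^*_{[P_{\kk}]}=[I_X]-[\Omega^{-1}X]$, so $[X](1-\delta^*_{[P_{\kk}]})=[X]-[I_X]+[\Omega^{-1}X]$, and bilinearity of $\langle-,-\rangle_1$ gives
\begin{equation*}
\langle[X],[Y]\rangle_3=\langle[X],[Y]\rangle_1-\langle[I_X],[Y]\rangle_1+\langle[\Omega^{-1}X],[Y]\rangle_1.
\end{equation*}
Comparing with the asserted formula, the entire problem reduces to the single identity
\begin{equation*}
\langle[I_X],[Y]\rangle_1=\sum_{[V]}[Y:V]\,\langle[\Omega^{-1}X],[V]\rangle_1.
\end{equation*}
Since $H$ is Frobenius, $I_X$ is projective, so $\text{Hom}_H(I_X,-)$ is exact and $\dim_{\kk}\text{Hom}_H(I_X,Y)=\sum_{[V]}[Y:V]\dim_{\kk}\text{Hom}_H(I_X,V)$; thus it is enough to show $\langle[I_X],[V]\rangle_1=\langle[\Omega^{-1}X],[V]\rangle_1$ for every simple $V$, equivalently $\text{top}(I_X)\cong\text{top}(\Omega^{-1}X)$.

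The hard part will be exactly this top-isomorphism. From the exact sequence $0\to X\to I_X\xrightarrow{\pi}\Omega^{-1}X\to0$, the surjection $\pi$ induces an isomorphism on tops if and only if its kernel satisfies $X\subseteq\text{rad}(I_X)$, so I would prove the latter. Decompose $I_X=\bigoplus_iI_i$ into indecomposable injectives; as $H$ is self-injective, each $I_i$ is also projective and has simple top, so $\text{rad}(I_i)$ is its unique maximal submodule. If $X\not\subseteq\text{rad}(I_X)=\bigoplus_i\text{rad}(I_i)$, then for some $i$ the composite $X\hookrightarrow I_X\twoheadrightarrow I_i$ has image not contained in $\text{rad}(I_i)$, hence is surjective; a surjection onto the projective module $I_i$ splits, exhibiting the nonzero injective $I_i$ as a direct summand of $X$ and contradicting that $X$ is indecomposable non-projective. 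This yields $X\subseteq\text{rad}(I_X)$, hence the required equality of tops, completing the proof of the formula. Finally, for $Y$ simple the multiplicities collapse ($[Y:V]=\delta_{[Y],[V]}$), the last two terms cancel, and one is left with $\langle[X],[Y]\rangle_3=\langle[X],[Y]\rangle_1$. Everything outside the top-isomorphism is bilinear bookkeeping via Lemma \ref{prop1515} and the exactness of $\text{Hom}_H(I_X,-)$.
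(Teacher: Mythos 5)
Your proof is correct, and its skeleton coincides with the paper's: both part (1) (every map out of a projective factors through itself, so $\langle-,-\rangle_2=\langle-,-\rangle_1$ there) and the reduction in part (2) via $\langle[X],[Y]\rangle_3=\langle[X](1-\delta^*_{[P_{\kk}]}),[Y]\rangle_1$, Lemma \ref{prop1515}(2), and the exactness of $\textrm{Hom}_H(I_X,-)$ (using that $I_X$ is projective because $H$ is Frobenius) are exactly the paper's steps. Where you genuinely diverge is in the pivotal identity $\langle[I_X],[V]\rangle_1=\langle[\Omega^{-1}X],[V]\rangle_1$ for simple $V$: the paper gets this in two lines by combining Corollary \ref{cor2015} ($\langle[X],[V]\rangle_2=\delta_{[X],[P_V]}$, which vanishes since $X$ is non-projective) with $\langle[X],[V]\rangle_2=\langle[I_X]-[\Omega^{-1}X],[V]\rangle_1$ from Proposition \ref{prop2.6}(2), whereas you prove it structurally: for $X$ indecomposable non-projective, $X\subseteq\textrm{rad}(I_X)$ (via the splitting argument on a component $X\to I_i$ not landing in $\textrm{rad}(I_i)$), hence $\textrm{top}(I_X)\cong\textrm{top}(\Omega^{-1}X)$, and then $\dim_{\kk}\textrm{Hom}_H(M,V)=[\textrm{top}(M):V]$ over the algebraically closed field $\kk$. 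Your splitting argument is sound, and the equivalence between the Hom-dimension identity and the isomorphism of tops is correctly justified. The trade-off: the paper's route is shorter because it recycles machinery already built for the forms, while yours is self-contained, independent of Corollary \ref{cor2015}, and makes visible the module-theoretic content hidden in the paper's computation, namely that the injective envelope and the cosyzygy of an indecomposable non-projective module have the same top.
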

\proof (1) It follows from the definition of the form $\langle-,-\rangle_3$.

(2) For any simple $H$-module $V$, on the one hand, $\langle[X],[V]\rangle_2=0$ by Corollary \ref{cor2015}. On the other hand, $\langle[X],[V]\rangle_2=\langle[X]\delta^*_{[P_{\kk}]},[V]\rangle_1=\langle[I_X]-[\Omega^{-1}X],[V]\rangle_1$. It follows that $\langle[I_X],[V]\rangle_1=\langle[\Omega^{-1}X],[V]\rangle_1$. Now
\begin{linenomath}
\begin{align*}
\langle[X],[Y]\rangle_3&=\langle[X](1-\delta^*_{[P_{\kk}]}),[Y]\rangle_1\\
&=\langle[X],[Y]\rangle_1+\langle[\Omega^{-1}X],[Y]\rangle_1-\langle[I_X],[Y]\rangle_1\\
&=\langle[X],[Y]\rangle_1+\langle[\Omega^{-1}X],[Y]\rangle_1-\sum_{[V]}[Y:V]\langle[I_X],[V]\rangle_1\\
&=\langle[X],[Y]\rangle_1+\langle[\Omega^{-1}X],[Y]\rangle_1-\sum_{[V]}[Y:V]\langle[\Omega^{-1}X],[V]\rangle_1,
\end{align*}
\end{linenomath} as desired.
\qed

The \textit{left radical} of the form $\langle-,-\rangle_3$ is the set $\{x\in r(H)\mid \langle x,y\rangle_3=0\ \text{for\ any}\ y\in r(H)\}$. It is equivalent to the set that $\{x\in r(H)\mid x(1-\delta^*_{[P_{\kk}]})=0\}$. Similarly, the \textit{right radical} of the form $\langle-,-\rangle_3$ is equivalent to the set $\{x\in r(H)\mid x(1-\delta_{[P_{\kk}]})=0\}$. The left and right radicals of the form coincide since $\delta_{[P_{\kk}]}\delta^*_{[P_{\kk}]}=\delta^*_{[P_{\kk}]}\delta_{[P_{\kk}]}=1$. Note that $[P](1-\delta_{[P_{\kk}]})=0$ for any projective module $P$. Thus, the ideal $\mathcal {P}$ of $r(H)$ generated by isomorphism classes of projective $H$-modules is contained in the radical of the form. For further results about the radical of the form, we need the following lemma.

\begin{lem}\label{g102}Let $M$ and $Z$ be two indecomposable $H$-modules.
\begin{enumerate}
  \item $\langle[M],\delta_{[Z]}\rangle_{3}
=\delta_{[M],[Z]}+\delta_{[\Omega^{-1}M],[Z]}-\delta_{[I_M],[Z]}.$
  \item $\delta_{[M]}\delta^*_{[P_{\kk}]}=\left\{
                                     \begin{array}{ll}
                                       -\delta_{[\Omega^{-1}M]}, & \hbox{M\ \text{is\ not\ projective};} \\
                                       \text{Soc} M, & \hbox{M\ \text{is\ projective}.}
                                     \end{array}
                                   \right.$
\end{enumerate}
\end{lem}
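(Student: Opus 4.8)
The plan is to prove both parts by exploiting the dual-basis relation in Remark~\ref{rem2016} together with the key formulas of Proposition~\ref{prop2.6}, since both statements amount to computing how $\delta_{[M]}$ and $\delta_{[M]}^*$ interact with the central unit $\delta_{[P_{\kk}]}$. For Part~(1), I would use the fact that $\langle-,-\rangle_3 = \langle -, -(1-\delta_{[P_{\kk}]})\rangle_1$ established just above the lemma, so that $\langle[M],\delta_{[Z]}\rangle_3 = \langle[M],\delta_{[Z]}\rangle_1 - \langle[M],\delta_{[Z]}\rangle_2$. The first term is $\delta_{[M],[Z]}$ by Lemma~\ref{gg15}(1). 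For the second term, apply Proposition~\ref{prop2.6}(2) to write $\langle[M],\delta_{[Z]}\rangle_2 = \langle[M]\delta^*_{[P_{\kk}]},\delta_{[Z]}\rangle_1$; combining this with the identity $[M]\delta^*_{[P_{\kk}]}=[I_M]-[\Omega^{-1}M]$ from Lemma~\ref{prop1515}(2) gives $\langle[I_M]-[\Omega^{-1}M],\delta_{[Z]}\rangle_1 = \delta_{[I_M],[Z]}-\delta_{[\Omega^{-1}M],[Z]}$, again by Lemma~\ref{gg15}(1). Subtracting yields exactly the claimed $\delta_{[M],[Z]}+\delta_{[\Omega^{-1}M],[Z]}-\delta_{[I_M],[Z]}$.

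For Part~(2), the natural strategy is to compute the element $\delta_{[M]}\delta^*_{[P_{\kk}]}$ by expanding it in the dual basis: using the expansion from Remark~\ref{rem2016}, any element $x$ satisfies $x = \sum_{[N]\in\text{ind}(H)}\langle[N],x\rangle_1\,\delta_{[N]}$ when $H$ is of finite representation type, but since we want a formula valid in general I would instead pair $\delta_{[M]}\delta^*_{[P_{\kk}]}$ against the orthonormal family $\{[N]\}$ via $\langle-,-\rangle_1$ and read off the coefficients. Concretely, I expect to compute $\langle[N],\delta_{[M]}\delta^*_{[P_{\kk}]}\rangle_1$ for indecomposable $N$ using the associativity of the form (Proposition~\ref{prop22.2}) to move $\delta^*_{[P_{\kk}]}$ across, reducing it to $\langle[N]\delta_{[P_{\kk}]},\delta_{[M]}\rangle_1$ or similar, and then invoking Part~(1) or Lemma~\ref{gg15}(1) to evaluate. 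Alternatively, and perhaps more cleanly, I would split into the two cases at the start.

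In the non-projective case, multiply the almost-split-sequence identity $\delta_{[M]}=[X]-[Y]+[M]$ on the right by $\delta^*_{[P_{\kk}]}$ and use $[-]\delta^*_{[P_{\kk}]}=[I_{(-)}]-[\Omega^{-1}(-)]$ from Lemma~\ref{prop1515}(2) term by term; the point is that applying the cosyzygy/injective-envelope operation to an almost split sequence ending at $M$ produces the almost split sequence ending at $\Omega^{-1}M$ (up to projective summands), so the three terms reassemble into $-\delta_{[\Omega^{-1}M]}$, the sign coming from the $\Omega^{-1}$ shift reversing the role of the sequence. In the projective case, $\delta_{[M]}=[M]-[\text{rad}\,M]$, and since $[M]\delta^*_{[P_{\kk}]}=[I_M]-[\Omega^{-1}M]$ with $M$ projective (hence injective, so $I_M=M$ and $\Omega^{-1}M=0$), the computation should collapse to $\text{Soc}\,M$.

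The hard part will be the non-projective case of Part~(2): justifying rigorously that applying $\cdot\,\delta^*_{[P_{\kk}]}$ (equivalently the $I_{(-)}-\Omega^{-1}(-)$ operation) to the almost split sequence ending at $M$ yields precisely the data of the almost split sequence ending at $\Omega^{-1}M$, with the correct sign. This requires the structural fact that $\Omega^{-1}$ commutes appropriately with the Auslander--Reiten translate and sends the almost split sequence $0\to X\to Y\to M\to 0$ to the almost split sequence $0\to M\to ?\to \Omega^{-1}M\to 0$ modulo projectives; controlling the projective summands that $\Omega^{-1}$ and $I_{(-)}$ introduce, so that they cancel in the alternating sum $\delta_{[M]}$, is the delicate bookkeeping step. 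I expect this to follow from Lemma~\ref{prop1515} combined with the standard relation between almost split sequences and the syzygy functors, but it is where the real content lies.
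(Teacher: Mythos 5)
Part (1) of your proposal is correct and is essentially identical to the paper's argument: rewrite $\langle[M],\delta_{[Z]}\rangle_3$ as $\langle[M](1-\delta^*_{[P_{\kk}]}),\delta_{[Z]}\rangle_1$, substitute $[M]\delta^*_{[P_{\kk}]}=[I_M]-[\Omega^{-1}M]$ from Lemma~\ref{prop1515}(2), and evaluate with Lemma~\ref{gg15}(1).

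For Part (2), however, your route diverges from the paper's, and it has a genuine gap exactly where you place the ``real content.'' The paper never manipulates the almost split sequence under $\Omega^{-1}$ at all: it writes the answer down in advance and verifies it by duality. For $M$ non-projective it computes, for every indecomposable $X$, that $\langle[X],\delta_{[M]}+\delta_{[\Omega^{-1}M]}\delta_{[P_{\kk}]}\rangle_1=\delta_{[X],[M]}-\delta_{[\Omega^{-1}X],[\Omega^{-1}M]}+\delta_{[I_X],[\Omega^{-1}M]}=0$ (using Proposition~\ref{prop2.6}(2), Lemma~\ref{prop1515}(2), Lemma~\ref{gg15}(1), injectivity of $\Omega^{-1}$ on non-projective indecomposables, and the fact that $\Omega^{-1}M$ has no injective summands), then concludes by non-degeneracy of $\langle-,-\rangle_1$. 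This buys a complete avoidance of the bookkeeping you defer: in your approach one must (i) run a horseshoe argument on $0\to\tau M\to Y\to M\to0$ to produce an exact sequence $0\to\Omega^{-1}\tau M\to X\to\Omega^{-1}M\to0$ with $[X]=[\Omega^{-1}Y]+[I_{\tau M}]+[I_M]-[I_Y]$, (ii) prove the induced epimorphism $X\to\Omega^{-1}M$ is right almost split so that Proposition~\ref{gg14} applies, and (iii) use $\Omega^{-1}\tau\cong\tau\Omega^{-1}$ on the stable category. You assert rather than prove (i)--(iii), so as written the non-projective case is incomplete, although the route is viable if those steps are supplied.

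In the projective case your sketch silently drops the term $[\text{rad}M]\delta^*_{[P_{\kk}]}$, and completing it honestly does \emph{not} give $[\text{Soc}M]$. Since $\text{Soc}(\text{rad}M)=\text{Soc}M$ is essential, the inclusion $\text{rad}M\subseteq M$ is an injective envelope, so $[\text{rad}M]\delta^*_{[P_{\kk}]}=[I_{\text{rad}M}]-[\Omega^{-1}\text{rad}M]=[M]-[\text{top}M]$ and hence $\delta_{[M]}\delta^*_{[P_{\kk}]}=([M]-[\text{rad}M])\delta^*_{[P_{\kk}]}=[\text{top}M]$. This agrees with the asserted $[\text{Soc}M]$ only when $\text{top}M\cong\text{Soc}M$, i.e.\ when the Nakayama permutation fixes $M$; it is the value $[\text{top}M]$ that is consistent with Lemma~\ref{prop1515}(1), since at $M=P_{\kk}$ one must get $\delta_{[P_{\kk}]}\delta^*_{[P_{\kk}]}=1=[\text{top}P_{\kk}]$, whereas for instance for Taft algebras (including the Sweedler algebra) $[\text{Soc}P_{\kk}]\neq[\kk]$. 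Your computation thus exposes a slip in the paper's own proof, which passes from $\langle[X],\text{Soc}M\rangle_2=\delta_{[X],[P_{\text{Soc}M}]}$ (Corollary~\ref{cor2015}) to $\delta_{[X],[M]}$, tacitly assuming $P_{\text{Soc}M}\cong M$. So your instinct for the projective case was right in method but wrong in its announced conclusion: carried out carefully it proves $[\text{top}M]$, and the statement as printed needs either that correction or a weak-symmetry hypothesis on $H$.
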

\proof (1) Note that $\langle[M],\delta_{[Z]}\rangle_{3}=\langle[M](1-\delta^*_{[P_{\kk}]}),\delta_{[Z]}\rangle_1$. It follows that
\begin{linenomath}\begin{align*}\langle[M](1-\delta^*_{[P_{\kk}]}),\delta_{[Z]}\rangle_1
&=\langle[M],\delta_{[Z]}\rangle_{1}-\langle[I_M]-[\Omega^{-1}M],\delta_{[Z]}\rangle_{1}\\
&=\langle[M],\delta_{[Z]}\rangle_{1}+\langle[\Omega^{-1}M],\delta_{[Z]}\rangle_{1}-\langle[I_M],\delta_{[Z]}\rangle_{1}\\
&=\delta_{[M],[Z]}+\delta_{[\Omega^{-1}M],[Z]}-\delta_{[I_M],[Z]}.
\end{align*}\end{linenomath}

(2) Suppose $M$ is not projective. For any indecomposable module $X$, we have
\begin{linenomath}\begin{align*}
\langle[X],\delta_{[M]}+\delta_{[\Omega^{-1}M]}\delta_{[P_{\kk}]}\rangle_1
&=\langle[X],\delta_{[M]}\rangle_1+\langle[X]\delta^*_{[P_{\kk}]},\delta_{[\Omega^{-1}M]}\rangle_1\ \text{by\ Proposition}\ \ref{prop2.6} (2)\\
&=\langle[X],\delta_{[M]}\rangle_1+\langle[I_X]-[\Omega^{-1}X],\delta_{[\Omega^{-1}M]}\rangle_1\\
&=\delta_{[X],[M]}-\delta_{[\Omega^{-1}X],[\Omega^{-1}M]}+\delta_{[I_X],[\Omega^{-1}M]}\\
&=0.
\end{align*}\end{linenomath}
Thus, $\delta_{[M]}+\delta_{[\Omega^{-1}M]}\delta_{[P_{\kk}]}=0$ since the form $\langle-,-\rangle_1$ is non-degenerate. This shows that $\delta_{[M]}\delta^*_{[P_{\kk}]}= -\delta_{[\Omega^{-1}M]}$. Now suppose $M$ is projective, for any indecomposable module $X$, we have
\begin{linenomath}\begin{align*}
\langle[X],\delta_{[M]}-\text{Soc}M\delta_{[P_{\kk}]}\rangle_1&=\langle[X],\delta_{[M]}\rangle_1-\langle[X],\text{Soc}M\delta_{[P_{\kk}]}\rangle_1\\
&=\langle[X],\delta_{[M]}\rangle_1-\langle[X],\text{Soc}M\rangle_2\\
&=\delta_{[X],[M]}-\delta_{[X],[M]} \ \ \text{by\ Corollary}\ \ref{cor2015}\\
&=0.
\end{align*}\end{linenomath} Thus, $\delta_{[M]}=\text{Soc}M\delta_{[P_{\kk}]}$, and hence $\delta_{[M]}\delta^*_{[P_{\kk}]}=\text{Soc}M.$
\qed

Recall that an $H$-module $M$ is called \textit{periodic of period $n$} if $\Omega^nM\cong M$ for a minimal natural $n$ (see e.g., \cite{Car}).

\begin{thm}Let $H$ be of finite representation type. The radical of the form $\langle-,-\rangle_3$ is equal to $\mathcal {P}$ if and only if there are no periodic modules of even period.
\end{thm}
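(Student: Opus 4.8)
The plan is to use that, since $H$ is of finite representation type, $\{\delta_{[M]}\mid [M]\in\mathrm{ind}(H)\}$ is a $\mathbb{Z}$-basis of $r(H)$ (Lemma \ref{gg15}(4)), and that the radical of $\langle-,-\rangle_3$ equals the fixed-point subgroup $\{x\in r(H)\mid x\delta^*_{[P_{\kk}]}=x\}$ of right multiplication by the central unit $\delta^*_{[P_{\kk}]}$. Because $\mathcal{P}\subseteq$ radical is already known, the whole question becomes: when does this operator have a fixed point outside $\mathcal{P}$? Since $\mathcal{P}$ is itself fixed pointwise by right multiplication by $\delta_{[P_{\kk}]}$ (as $[P]\delta_{[P_{\kk}]}=[P]$ for projective $P$), it is cleanest to pass to $r_{st}(H)\cong r(H)/\mathcal{P}$ and note that $\mathrm{radical}=\mathcal{P}$ \emph{iff} the image of the radical in $r(H)/\mathcal{P}$ is zero. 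On $r(H)/\mathcal{P}$ the relation $[M]\delta_{[P_{\kk}]}=[P_M]-[\Omega M]$ of Lemma \ref{prop1515}(2) shows that right multiplication by $\delta_{[P_{\kk}]}$ acts as $\overline{[M]}\mapsto-\overline{[\Omega M]}$, i.e. as $-\Omega_{*}$, where $\Omega_{*}$ is the permutation operator induced by the syzygy functor on the finite basis of non-projective indecomposables.

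With this reduction, the first real step is to describe the fixed space of $-\Omega_{*}$. Writing a putative image $\bar{x}=\sum_{N}e_{N}\overline{[N]}$ and comparing coefficients in $-\Omega_{*}\bar{x}=\bar{x}$ yields, on each $\Omega$-orbit, the alternating relation $e_{\Omega N}=-e_{N}$. In finite representation type every non-projective indecomposable is $\Omega$-periodic and its orbit length is exactly its period, so going once around an orbit of length $\ell$ forces $\bigl(1-(-1)^{\ell}\bigr)e_{N}=0$. Hence odd-length orbits contribute nothing while even-length orbits contribute a one-parameter alternating family. This immediately gives the implication ``no periodic module of even period $\Rightarrow\mathrm{radical}=\mathcal{P}$'': if all periods are odd then $-\Omega_{*}$ has no nonzero fixed vector, the image of the radical in $r(H)/\mathcal{P}$ vanishes, and so $\mathrm{radical}\subseteq\mathcal{P}$. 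For the converse I would take a module of even period $2m$ with $\Omega$-orbit $M_{0},\dots,M_{2m-1}$ and form the candidate $z=\sum_{i=0}^{2m-1}(-1)^{i}\delta_{[M_{i}]}$; a direct computation from $\delta_{[M]}\delta^{*}_{[P_{\kk}]}=-\delta_{[\Omega^{-1}M]}$ (Lemma \ref{g102}(2)) shows $z\delta^{*}_{[P_{\kk}]}=z$, so $z$ lies in the radical and realizes the even-orbit fixed vector.

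The step I expect to be the main obstacle is the converse's genuine content: showing that such a $z$ really \emph{escapes} $\mathcal{P}$, equivalently that the even-orbit fixed vectors of $-\Omega_{*}$ actually lift to radical elements rather than being absorbed into the ideal generated by the projectives. Since $z$ lies in the span of the non-projective $\delta_{[N]}$, hence in $\ker\varphi$ (Lemma \ref{gg15}(5)), the real question is whether $z\in\mathcal{P}\cap\ker\varphi$, and this is controlled by the Cartan map $\varphi|_{\mathcal{P}}$. Indeed, the short exact sequence $0\to\Omega N\to P_{N}\to N\to0$ gives $[P_{N}]=[N]+[\Omega N]$ in $G_{0}(H)$, whence $\varphi\bigl(\sum_{i}(-1)^{i}[P_{M_{i}}]\bigr)=0$: the entire obstruction to lifting lives in $\mathcal{P}\cap\ker\varphi$, i.e. in the corank of the Cartan matrix. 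I would therefore concentrate the effort on a linear-independence analysis of the classes $\sum_{i}(-1)^{i}[P_{M_{i}}]$ attached to the various even-period orbits modulo $\mathcal{P}$, since it is precisely the interaction of these syzygy chains with the Cartan map — and not the period parity alone — that governs whether a nonzero radical element survives outside $\mathcal{P}$.
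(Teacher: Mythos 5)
Your forward direction is correct and, despite the different packaging, is essentially the paper's argument. The paper takes a putative radical element $\sum_{[M]}\lambda_{[M]}[M]$ supported on non-projective indecomposables (which is legitimate since $\mathcal{P}$ already lies in the radical), pairs it with the elements $\delta_{[\Omega^i Z]}$, and reads off from Lemma \ref{g102} (1) the relation $\lambda_{[\Omega^i Z]}+\lambda_{[\Omega^{i+1}Z]}=0$ --- exactly your alternating relation $e_{\Omega N}=-e_N$ for the fixed space of $-\Omega_*$ on $r(H)/\mathcal{P}$; both arguments then use that in finite representation type $\Omega$ permutes the finitely many non-projective indecomposables, so odd orbits force all coefficients to vanish. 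Your candidate $z=\sum_{i}(-1)^i\delta_{[M_i]}$ for the converse is also precisely the paper's element, and your verification via $\delta_{[M]}\delta^*_{[P_{\kk}]}=-\delta_{[\Omega^{-1}M]}$ (Lemma \ref{g102} (2)) that $z$ lies in the radical is right.

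The genuine gap is in the converse, and you name it yourself: you never prove $z\notin\mathcal{P}$, proposing instead an open-ended ``linear-independence analysis'' through the Cartan map, and you even suggest that period parity alone may not decide the question --- which, if true, would contradict the very statement being proved. The paper closes this step with a short direct computation that needs no information about the Cartan map: suppose $z=\sum_j\lambda_j[P_j]\in\mathcal{P}$ with the $P_j$ indecomposable projective; expand each $[P_j]$ in the $\delta$-basis via $[P_j]=\sum_{[N]\in\text{ind}(H)}\langle[N],[P_j]\rangle_1\,\delta_{[N]}$ (Remark \ref{rem2016}, using finite representation type), and compare the coefficient of $\delta_{[\Omega^i M]}$ on the two sides. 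This yields $(-1)^i=\sum_j\lambda_j\dim_{\kk}\mathrm{Hom}_H(\Omega^iM,P_j)$ for every $i$, and the paper derives the contradiction by arguing that the right-hand side vanishes, identifying it with $\sum_j\lambda_j\dim_{\kk}\mathrm{Ext}^i_H(M,P_j)=0$ (Ext into a projective, hence injective, module). So the even-orbit element always survives outside $\mathcal{P}$; your observation that the obstruction sits in $\mathcal{P}\cap\ker\varphi$ is reasonable as far as it goes, but the essential content of the theorem's ``only if'' half is exactly the coefficient comparison you deferred, and without it your proposal proves only one direction.
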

\proof Note that the ideal $\mathcal {P}$ of $r(H)$ is contained in the radical of the form $\langle-,-\rangle_3$. If $\mathcal {P}$ is properly contained in the radical of the form $\langle-,-\rangle_3$, there exist some indecomposable non-projective $H$-modules $M$ such that $\sum_{[M]}\lambda_{[M]}[M]$ is a non-zero element in the radical of the form. For any indecomposable non-projective module $Z$, by Lemma \ref{g102} (1), we have
$0=\langle\sum_{[M]}\lambda_{[M]}[M],\delta_{[\Omega^iZ]}\rangle_3=\lambda_{[\Omega^iZ]}+\lambda_{[\Omega^{i+1}Z]}.$
It follows that the following equations hold:
\begin{linenomath}$$\lambda_{[\Omega^iZ]}=(-1)^i\lambda_{[Z]}.$$\end{linenomath}
This implies that $\lambda_{[Z]}=0$ if $Z$ is a periodic module of odd period. However, $\sum_{[M]}\lambda_{[M]}[M]$ is not zero, implying that there exists a periodic module $M$ of even period with $\lambda_{[M]}\neq0$.
Conversely, Suppose the radical of the form $\langle-,-\rangle_3$ is equal to $\mathcal {P}$ and $M$ is a periodic module of even period $2s$. It follows from Lemma \ref{g102} (2) that $\sum_{i=1}^{2s}(-1)^i\delta_{[\Omega^iM]}(1-\delta^*_{[P_{\kk}]})=0$. Thus,  $\sum_{i=1}^{2s}(-1)^i\delta_{[\Omega^iM]}$ belongs to the radical of the form $\langle-,-\rangle_3$. Let $\sum_{i=1}^{2s}(-1)^i\delta_{[\Omega^iM]}=\sum_{j}\lambda_j[P_j]$ for some indecomposable projective modules $P_j$. By Remark \ref{rem2016}, $[P_j]$ can be written as $[P_j]=\sum_{[M]\in\textrm{ind}(H)}\langle [M],[P_j]\rangle_1\delta_{[M]}$. It follows that \begin{linenomath}$$\sum_{i=1}^{2s}(-1)^i\delta_{[\Omega^iM]}=\sum_{j}\sum_{[M]\in\textrm{ind}(H)}\lambda_j\langle [M],[P_j]\rangle_1\delta_{[M]}.$$\end{linenomath}
Comparing the coefficient of $\delta_{[\Omega^i M]}$ in both two sides of the above equality, we obtain that
\begin{linenomath}
$$
(-1)^i=\sum_j\lambda_j\dim_{\kk}\text{Hom}_H(\Omega^i M, P_j)=\sum_j\lambda_j\dim_{\kk}\text{Ext}^i_H(M,P_j)=0,
$$
\end{linenomath}
a contradiction. \qed

\section{\bf Some ring theoretic properties of Green rings}
In this section, we use an associative non-degenerate bilinear form to explore some ring theoretic properties of the Green ring $r(H)$ of $H$. We show that the Green ring $r(H)$ is a Frobenius algebra over $\mathbb{Z}$ if $H$ is of finite representation type. We describe the relation between the Green ring $r(H)$ and the Grothendieck ring $G_0(H)$ of $H$. We give several one-sided ideals of $r(H)$, which are useful to describe the nilpotent radical and central primitive idempotents of $r(H)$.

Note that the $\mathbb{Z}$-bilinear form $\langle-,-\rangle_1$ is not associative in general. However, we may modify it as follows:
\begin{equation}\label{eq3.5}
([X],[Y]):=\langle[X],[Y]^*\rangle_1=\dim_{\kk} \textrm{Hom}_H(X,Y^*).
\end{equation}
Then $(-,-)$ extends to a $\mathbb{Z}$-bilinear form on $r(H)$.

\begin{lem}\label{gg16}
For $X,Y,Z\in H$-mod, the form $(-,-)$ satisfies the following:
\begin{enumerate}
\item[$(1)$] $([X][Y],[Z])=([X],[Y][Z]).$
\item[$(2)$] $([X],[Y])=([Y]^{**},[X]).$ If $S^2$ is inner, then $([X],[Y])=([Y],[X]).$
\end{enumerate}
\end{lem}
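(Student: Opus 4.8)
The plan is to reduce everything to the associativity-type identities for $\langle-,-\rangle_1$ recorded in Proposition~\ref{prop22.2}, together with two elementary facts about the duality operator $\ast$: that it is an anti-automorphism of $r(H)$, so that $([Y][Z])^\ast=[Z]^\ast[Y]^\ast$, and that the (contravariant) duality functor induces, for any $A,B\in H$-mod, a $\kk$-linear isomorphism $\textrm{Hom}_H(A,B)\cong\textrm{Hom}_H(B^\ast,A^\ast)$ sending $f$ to $f^\ast$. Both statements are then formal manipulations.

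For part (1), I would unwind the definition $([X][Y],[Z])=\langle[X][Y],[Z]^\ast\rangle_1$ and apply the first identity of Proposition~\ref{prop22.2}(1) with $[Z]$ replaced by $[Z]^\ast$, obtaining $\langle[X],[Z]^\ast[Y]^\ast\rangle_1$. Since $\ast$ is an anti-automorphism, $[Z]^\ast[Y]^\ast=([Y][Z])^\ast$, and this last expression is by definition equal to $([X],[Y][Z])$. Thus associativity of $(-,-)$ is an immediate consequence of the corresponding property of $\langle-,-\rangle_1$.

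For part (2), the key is the Hom-space isomorphism above. Writing $([X],[Y])=\dim_{\kk}\textrm{Hom}_H(X,Y^\ast)$ and applying the duality functor gives $\textrm{Hom}_H(X,Y^\ast)\cong\textrm{Hom}_H((Y^\ast)^\ast,X^\ast)=\textrm{Hom}_H(Y^{\ast\ast},X^\ast)$, whose dimension is exactly $\langle[Y]^{\ast\ast},[X]^\ast\rangle_1=([Y]^{\ast\ast},[X])$; this establishes $([X],[Y])=([Y]^{\ast\ast},[X])$. When $S^2$ is inner the double-dual functor is naturally isomorphic to the identity (equivalently $\ast$ is an involution, as already noted in the excerpt), so $[Y]^{\ast\ast}=[Y]$ in $r(H)$, and the symmetry $([X],[Y])=([Y],[X])$ follows at once.

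Since both parts are formal consequences of results already available, I do not anticipate a genuine obstacle. The only points requiring care are the bookkeeping of the order reversal under the anti-automorphism $\ast$ (so that $[Z]^\ast[Y]^\ast$, and not $[Y]^\ast[Z]^\ast$, is what matches $([Y][Z])^\ast$), and checking that the natural Hom isomorphism $f\mapsto f^\ast$ is genuinely $\kk$-linear and bijective, so that it preserves $\dim_{\kk}$.
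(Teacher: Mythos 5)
Your proposal is correct and follows essentially the same route as the paper: part (1) is the same reduction through the adjunction isomorphisms of Lemma~\ref{glem4} (your Proposition~\ref{prop22.2} is itself derived from that lemma), and part (2) rests on the same $\kk$-linear isomorphism $\textrm{Hom}_H(X,Y^*)\cong\textrm{Hom}_H(Y^{**},X^*)$ that the paper invokes, followed by the same observation that $\ast$ is an involution when $S^2$ is inner. Your justification of that isomorphism via the contravariant duality anti-equivalence $f\mapsto f^*$ is a standard and valid alternative to the paper's derivation from Lemma~\ref{glem4}, so no gap remains.
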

\proof (1) The associativity of the form follows from Lemma \ref{glem4} (1), i.e.,
\begin{linenomath}
\begin{align*}
([X][Y],[Z])&=\dim_{\kk}\textrm{Hom}_H(X\otimes Y,Z^*)\\
&=\dim_{\kk}\textrm{Hom}_H(X,(Y\otimes Z)^*)\\
&=([X],[Y][Z]).
\end{align*}
\end{linenomath}

(2) The $\kk$-linear isomorphism $\textrm{Hom}_H(X,Y^*)\cong \textrm{Hom}_H(Y^{**},X^*)$ (Lemma \ref{glem4}, see also \cite{Lo}) implies that $([X],[Y])=([Y]^{**},[X]).$ If $S^2$ is inner, the anti-automorphism $\ast$ of $r(H)$ is an involution. Hence $([X],[Y])=([Y],[X]).$
\qed

The following result can be deduced directly from Lemma \ref{gg15}.

\begin{lem}\label{gg17}
The following hold in $r(H)$:
\begin{enumerate}
\item[$(1)$] For any two indecomposable modules $X$ and $Z$, $(\delta^*_{[Z]},[X])=\delta_{[Z],[X]}$.
\item[$(2)$] For any $x\in r(H)$, $x=\sum_{[M]\in\textrm{ind}(H)}(\delta^*_{[M]},x)[M]$.
\item[$(3)$] The form $(-,-)$ is non-degenerate.
\end{enumerate}
\end{lem}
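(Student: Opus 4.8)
The final statement to prove is Lemma \ref{gg17}, which asserts three facts about the modified form $(-,-)$: the dual-basis relation $(\delta^*_{[Z]},[X])=\delta_{[Z],[X]}$, the expansion $x=\sum_{[M]}(\delta^*_{[M]},x)[M]$, and non-degeneracy. Let me sketch a proof.

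=== MY PROOF PROPOSAL ===

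The plan is to deduce all three parts directly from Lemma \ref{gg15} by unwinding the definition $([X],[Y])=\langle[X],[Y]^*\rangle_1$ and using that $\ast$ is a ring anti-automorphism with $\ast\ast$ acting as $[Y]\mapsto[Y^{**}]$.

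First I would prove Part (1). By definition of the form, $(\delta^*_{[Z]},[X])=\langle\delta^*_{[Z]},[X]^*\rangle_1$. Since $\ast$ is an additive anti-automorphism of $r(H)$, applying it to the defining identity $\delta_{[Z]}=[W]-[Y]+[Z]$ (coming from the almost split sequence ending at $Z$, or the projective case) shows $\delta^*_{[Z]}$ is the corresponding alternating sum of duals. The cleanest route, however, is to observe that $\langle\delta^*_{[Z]},[X^*]\rangle_1=\langle[X],\delta_{[Z]}\rangle_1$: this follows because dualizing gives a $\kk$-linear isomorphism $\textrm{Hom}_H(A,B)\cong\textrm{Hom}_H(B^*,A^*)$, so $\langle[A],[B]\rangle_1=\langle[B^*],[A^*]\rangle_1$, and then one matches $A=X$, $B=\delta_{[Z]}$ against $A=\delta^*_{[Z]}$, $B=X^*$ term by term. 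Then Lemma \ref{gg15}(1) gives $\langle[X],\delta_{[Z]}\rangle_1=\delta_{[X],[Z]}=\delta_{[Z],[X]}$, which is exactly the claim.

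Next, Part (2) follows from Part (1) by the same expansion principle already used for $\langle-,-\rangle_1$ in Lemma \ref{gg15}(2). Writing an arbitrary $x\in r(H)$ in the basis $\textrm{ind}(H)$ as $x=\sum_{[M]}c_{[M]}[M]$, I would pair both sides on the left with $\delta^*_{[N]}$ for a fixed indecomposable $N$: by bilinearity and Part (1), $(\delta^*_{[N]},x)=\sum_{[M]}c_{[M]}(\delta^*_{[N]},[M])=\sum_{[M]}c_{[M]}\delta_{[N],[M]}=c_{[N]}$, so the coefficient $c_{[N]}$ equals $(\delta^*_{[N]},x)$, giving the stated formula. (This requires knowing that these inner products are defined term by term and that the pairing against a fixed $\delta^*_{[N]}$ only detects finitely many terms, which is automatic since any fixed $x$ is a finite $\mathbb{Z}$-combination of indecomposables.)

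Finally, Part (3): if $x\neq0$ then some coefficient $c_{[N]}=(\delta^*_{[N]},x)$ is nonzero by Part (2), exhibiting an element—namely $\delta^*_{[N]}$—pairing nontrivially with $x$ on the left; symmetrically, using the right-hand version of the expansion (or the identity $([X],[Y])=([Y]^{**},[X])$ from Lemma \ref{gg16}(2) to transfer left-nondegeneracy to right-nondegeneracy) one gets non-degeneracy on both sides. I expect the only genuinely delicate point to be bookkeeping the duality relation $\langle[A],[B]\rangle_1=\langle[B^*],[A^*]\rangle_1$ correctly so that Part (1) lands on $\delta_{[Z],[X]}$ with the right indices; everything after that is formal linear algebra over the $\mathbb{Z}$-basis $\textrm{ind}(H)$ supplied by Lemma \ref{gg15}.
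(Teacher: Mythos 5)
Your proposal is correct and follows the route the paper intends: the paper gives no explicit proof, stating only that the lemma ``can be deduced directly from Lemma \ref{gg15},'' and your derivation does exactly that, transporting Lemma \ref{gg15}(1)--(2) through the duality operator via the identity $\langle [A],[B]\rangle_1=\langle [B^*],[A^*]\rangle_1$ (the same $\kk$-linear isomorphism $\textrm{Hom}_H(A,B)\cong\textrm{Hom}_H(B^*,A^*)$ the paper itself invokes in proving Lemma \ref{gg16}(2)). Your handling of the two-sided non-degeneracy in Part (3), using $([X],[Y])=([Y]^{**},[X])$ together with the fact that $\ast$ permutes $\textrm{ind}(H)$, is also sound and consistent with the paper's toolkit.
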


As an immediate consequence we obtain the following Frobenius property of $r(H)$.

\begin{prop}\label{gg18}
Let $H$ be of finite representation type. The Green ring $r(H)$ is a Frobenius $\mathbb{Z}$-algebra. Moreover, $r(H)$ is a symmetric $\mathbb{Z}$-algebra if the square of the antipode of $H$ is inner.
\end{prop}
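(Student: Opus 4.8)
The plan is to establish the Frobenius property by exhibiting a non-degenerate associative bilinear form, which is exactly what the preceding lemmas have set up. Recall that a finite-rank algebra $A$ over a commutative ring is Frobenius precisely when it admits an associative non-degenerate bilinear form; the symmetric case adds the requirement that the form be symmetric. So the entire statement reduces to verifying the hypotheses of Lemma \ref{gg16} and Lemma \ref{gg17} under the assumption that $H$ is of finite representation type.

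First I would note that when $H$ is of finite representation type, the set $\textrm{ind}(H)$ is finite, so $r(H)$ is a free $\mathbb{Z}$-module of finite rank with basis $\{[M]\mid [M]\in\textrm{ind}(H)\}$. This finiteness is what makes ``Frobenius $\mathbb{Z}$-algebra'' meaningful in the classical sense. Next I would invoke Lemma \ref{gg16} (1) to record that the form $(-,-)$ defined in (\ref{eq3.5}) is associative, and Lemma \ref{gg17} (3) to record that it is non-degenerate. Combining these two facts with the finite rank gives immediately that $r(H)$ is a Frobenius $\mathbb{Z}$-algebra: the associative non-degenerate form induces an isomorphism $r(H)\to\textrm{Hom}_{\mathbb{Z}}(r(H),\mathbb{Z})$ of $r(H)$-modules, which is the defining property. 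Concretely, Lemma \ref{gg17} (1)--(2) already produces the dual basis $\{\delta^*_{[M]}\}$ paired against $\{[M]\}$, so the Frobenius structure is explicit rather than merely abstract.

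For the symmetric assertion, I would add the hypothesis that $S^2$ is inner and apply the second half of Lemma \ref{gg16} (2), which states that in that case $([X],[Y])=([Y],[X])$. A symmetric associative non-degenerate form is exactly a symmetric algebra structure, so this finishes the second claim. The only subtlety worth flagging is that the general identity $([X],[Y])=([Y]^{**},[X])$ from Lemma \ref{gg16} (2) shows the form need not be symmetric without the inner hypothesis, since $\ast$ is only an anti-automorphism and is an involution precisely when $S^2$ is inner; this is why the symmetric conclusion genuinely requires that extra assumption.

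I do not expect any real obstacle here, since the substantive work --- associativity, non-degeneracy, and the symmetry criterion --- has all been carried out in Lemmas \ref{gg16} and \ref{gg17}. The proof is essentially a citation of those two lemmas together with the finite-rank observation, so the ``hard part'' is purely bookkeeping: making sure the finite representation type hypothesis is used only to guarantee the finite $\mathbb{Z}$-basis, and that the non-degeneracy in Lemma \ref{gg17} (3) is of the strong form (inducing an isomorphism to the dual module) rather than merely the weak pairing-nonzero form mentioned in Remark \ref{rem2016}. Since Lemma \ref{gg17} (2) exhibits an explicit dual basis, the strong non-degeneracy is automatic.
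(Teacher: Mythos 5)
Your proposal is correct and takes essentially the same route as the paper: both prove the Frobenius property by citing the associativity and non-degeneracy of the form $(-,-)$ from Lemmas \ref{gg16} and \ref{gg17}, using the dual bases $\{\delta^*_{[M]},[M]\mid [M]\in\textrm{ind}(H)\}$ to realize the isomorphism $r(H)\to\textrm{Hom}_{\mathbb{Z}}(r(H),\mathbb{Z})$, and both obtain the symmetric conclusion from Lemma \ref{gg16}\,(2) when $S^2$ is inner. Your remark that the explicit dual basis is what upgrades the weak non-degeneracy of Remark \ref{rem2016} to the strong (unimodular) form needed over $\mathbb{Z}$ is precisely the role the dual bases play in the paper's argument.
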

\proof Note that $r(H)^{\vee}:=\text{Hom}_{\mathbb{Z}}(r(H),\mathbb{Z})$ is a $(r(H),r(H))$-bimodule via $(afb)(x)=f(bxa)$, for $a,b,x\in r(H)$ and $f\in r(H)^{\vee}$. Since $H$ is of finite representation type, the form $(-,-)$ is associative and non-degenerate with a pair of dual bases $\{\delta^*_{[M]},[M]\mid [M]\in\textrm{ind}(H)\}$. Thus, the map $\rho$ from $r(H)$ to $r(H)^{\vee}$ given by $x\mapsto (-,x)$ is a left $r(H)$-module isomorphism, and hence $r(H)$ is a Frobenius $\mathbb{Z}$-algebra. Moreover, if the square of the antipode is inner, the bilinear form is symmetric and hence $\rho$ is a $(r(H),r(H))$-bimodule isomorphism. It follows that $r(H)$ is a symmetric $\mathbb{Z}$-algebra. \qed

\begin{rem}\label{rem2015} Let $H$ be of finite representation type.
\begin{enumerate}
  \item The Green ring $r(H)$ is a Frobenius $\mathbb{Z}$-algebra with a pair of dual bases $\{\delta^*_{[M]},[M]\mid [M]\in\textrm{ind}(H)\}$ with respect to the form $(-,-)$. The equality of Lemma \ref{gg17} (2) is now equivalent to the following equality:
\begin{linenomath}
\begin{equation*}x=\sum_{[M]\in\textrm{ind}(H)}(x,[M])\delta^*_{[M]},\ \text{for}\ x\in r(H).
\end{equation*}
\end{linenomath}
This means that the transformation matrix from the dual basis $\{\delta^*_{[M]}\mid [M]\in\textrm{ind}(H)\}$  to the standard  basis $\text{ind}(H)$ is an invertible integer matrix with entries $([X],[Y])=\dim_{\kk}\text{Hom}_H(X,Y^*)$ for $[X],[Y]\in\textrm{ind}(H)$.

\item If $H$ is semisimple, then $S^2$ is inner \cite{LR} and $\delta^*_{[M]}=[M]^*=[M^*].$ In this case, $r(H)=G_0(H)$ is symmetric (see Proposition \ref{gg18}) and semiprime \cite{Lo} with a pair of dual bases $\{[M^*],[M]\mid [M]\in\textrm{ind}(H)\}$. We refer to \cite{Wi2} for more details in the semisimple case.
\end{enumerate}
\end{rem}

The bilinear form $(-,-)$ can be used to describe the relation between the Green ring $r(H)$ and the Grothendieck ring $G_0(H)$ of $H$.
Let $\mathcal {P}^{\perp}$ be the subgroup of $r(H)$ which is orthogonal to $\mathcal {P}$ with respect to the form $(-,-)$. Then $\mathcal {P}^{\perp}$ is a two-sided ideal of $r(H)$.

\begin{prop}\label{prop2015} The Grothendieck ring $G_0(H)$ is isomorphic to the quotient ring $r(H)/\mathcal {P}^{\perp}$.
\end{prop}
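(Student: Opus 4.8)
The plan is to exhibit an explicit ring epimorphism from $r(H)$ onto $G_0(H)$ whose kernel is exactly $\mathcal{P}^{\perp}$, and then invoke the first isomorphism theorem. The natural candidate is the map $\varphi$ from (\ref{equgel}), which sends $[M]$ to its composition-factor multiplicities $\sum_{[V]}[M:V][V]$; this is already a ring epimorphism with $G_0(H)$ as image. So the whole task reduces to proving the set-theoretic equality $\ker\varphi=\mathcal{P}^{\perp}$.

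First I would establish the inclusion $\mathcal{P}^{\perp}\subseteq\ker\varphi$, or more usefully argue the reverse-style characterization through the pairing. The key computation is to interpret the form $(-,-)$ against projectives in terms of composition multiplicities. For an indecomposable projective $[P_V]$ (the projective cover of a simple $V$) and any module $X$, I expect $([X],[P_V]) = \dim_{\kk}\mathrm{Hom}_H(X,P_V^*)$ to compute the multiplicity of $V^*$ (equivalently, a composition factor datum) in $X$. Concretely, since $P_V^*$ is the injective envelope of $V^*$, the dimension $\dim_{\kk}\mathrm{Hom}_H(X,P_V^*)$ equals $[X:V^*]$, the multiplicity of the simple $V^*$ as a composition factor of $X$. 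This is the crucial bridge: it says that pairing $X$ against the projective basis of $\mathcal{P}$ recovers precisely the composition-factor data that $\varphi$ records. From this, $x\in\mathcal{P}^{\perp}$ means $([x],[P])=0$ for every projective $P$, which forces all composition multiplicities of $x$ to vanish, i.e.\ $\varphi(x)=0$; and conversely $\varphi(x)=0$ forces $x\in\mathcal{P}^{\perp}$.

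The main steps, in order, are: (i) verify that $\varphi$ from (\ref{equgel}) is a surjective ring homomorphism, which is essentially recalled already; (ii) prove the pairing identity $([X],[P_V])=[X:V^*]$ using that injective envelopes of simples are duals of projective covers together with $(-,-)$ being defined via $\mathrm{Hom}$ into the dual; (iii) use the fact that $\{[P_V]\}$ over simple $V$ spans $\mathcal{P}$ (indeed every projective is a sum of indecomposable projectives $P_V$) so that orthogonality to $\mathcal{P}$ is equivalent to orthogonality to all $[P_V]$; (iv) conclude $x\in\mathcal{P}^{\perp}\iff [X:V^*]=0$ for all $V\iff\varphi(x)=0$, giving $\ker\varphi=\mathcal{P}^{\perp}$; (v) apply the first isomorphism theorem to get $r(H)/\mathcal{P}^{\perp}\cong G_0(H)$ as rings, after noting $\mathcal{P}^{\perp}$ is a two-sided ideal (already stated).

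The step I expect to be the main obstacle is (ii), the identification $([X],[P_V])=[X:V^*]$. The subtlety is bookkeeping with the duality operator $*$ and the order of the antipode squared: $P_V^*$ need not be the projective cover of $V^*$ but rather its injective envelope, and one must check that $\mathrm{Hom}_H(X,I_{V^*})$ genuinely counts composition multiplicities of $V^*$ and not of $V$ or $V^{**}$. I would handle this by working with the socle/top filtration and the standard fact $\dim_{\kk}\mathrm{Hom}_H(X,I_W)=[X:W]$ for any simple $W$, setting $W=V^*$; since $\{V^*\}$ ranges over all simples as $V$ does, the collection of conditions $[X:V^*]=0$ for all $V$ is the same as $\varphi(x)=0$. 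Once this dictionary is pinned down precisely, the remaining inclusions and the isomorphism-theorem conclusion are routine.
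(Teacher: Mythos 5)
Your proposal is correct, and at the decisive step it takes a genuinely different route from the paper. The paper also reduces everything to showing $\ker\varphi=\mathcal {P}^{\perp}$, but it argues via the ring identity $[M][P]=\sum_{[V]}[M:V][V][P]$ in $r(H)$ (a short exact sequence tensored with a projective splits) together with associativity of $(-,-)$ for the inclusion $\ker\varphi\subseteq\mathcal {P}^{\perp}$, and for the converse it uses non-degeneracy of $(-,-)$ to deduce $x[P]=0$ from $x\in\mathcal {P}^{\perp}$ and then invokes the faithfulness of $\mathcal {P}$ as a $G_0(H)$-module, cited from Lorenz. You instead prove the single pairing identity $([X],[P_V])=\dim_{\kk}\textrm{Hom}_H(X,P_V^*)=[X:V^*]$, using that $*$ is a contravariant equivalence (so it carries the projective cover $P_V\twoheadrightarrow V$ to the injective envelope $V^*\hookrightarrow P_V^*$, i.e.\ $P_V^*\cong I_{V^*}$) and the standard fact $\dim_{\kk}\textrm{Hom}_H(X,I_W)=[X:W]$ for $W$ simple over an algebraically closed field; since the classes $[P_V]$ span $\mathcal {P}$ and $V\mapsto V^*$ permutes the simples, both inclusions $\ker\varphi\subseteq\mathcal {P}^{\perp}$ and $\mathcal {P}^{\perp}\subseteq\ker\varphi$ drop out simultaneously. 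Your version buys self-containedness: it bypasses Lorenz's faithfulness result and even the non-degeneracy of $(-,-)$, at the modest price of the injective-envelope bookkeeping you correctly flag as the delicate point (which checks out: $\textrm{soc}(P_V^*)=(\textrm{top}\,P_V)^*=V^*$, and $\dim_{\kk}\textrm{Hom}_H(S,I_W)=\delta_{[S],[W]}$ since $\textrm{End}_H(W)\cong\kk$, whence the multiplicity formula follows by exactness of $\textrm{Hom}_H(-,I_W)$ along a composition series). The paper's argument, by contrast, is more module-theoretically opaque at the converse step but exhibits the useful structural fact that $\mathcal {P}$ is a faithful $G_0(H)$-module, which your route does not need and does not reprove.
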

\proof   Observe that the natural morphism  $\varphi$ given in $(\ref{equgel})$ is surjective. It is sufficient to show that $\ker\varphi=\mathcal {P}^{\perp}$ . Suppose  $\sum_{[M]\in\text{ind}(H)}\lambda_{[M]}[M]\in\ker\varphi$, where $\lambda_{[M]}\in\mathbb{Z}$. Then
\begin{linenomath}
$$\sum_{[V]}\sum_{[M]\in\text{ind}(H)}\lambda_{[M]}[M:V][V]=0.$$
\end{linenomath} Note that a short exact sequence tensoring over $\kk$ with a projective module $P$ is split. It follows that $[M][P]=\sum_{[V]}[M:V][V][P]$ in $r(H)$, and hence
\begin{linenomath}
\begin{align*}
(\sum_{[M]\in\text{ind}(H)}\lambda_{[M]}[M],[P])&=(\sum_{[M]\in\text{ind}(H)}\lambda_{[M]}[M][P],[\kk])\\
&=(\sum_{[V]}\sum_{[M]\in\text{ind}(H)}\lambda_{[M]}[M:V][V][P],[\kk])\\
&=0.
\end{align*}
\end{linenomath}
This implies that $\sum_{[M]\in\text{ind}(H)}\lambda_{[M]}[M]\in\mathcal {P}^{\perp}$.
Now, we assume $\sum_{[M]\in\text{ind}(H)}\lambda_{[M]}[M]\in\mathcal {P}^{\perp}$. Note that $[P]y\in\mathcal {P}$ for any $y\in r(H)$ and $[P]\in \mathcal{P}$. We have
\begin{linenomath}
$$(\sum_{[M]\in\text{ind}(H)}\lambda_{[M]}[M][P],y)=(\sum_{[M]\in\text{ind}(H)}\lambda_{[M]}[M],[P]y)=0.$$
\end{linenomath}
This implies that  $\sum_{[M]\in\text{ind}(H)}\lambda_{[M]}[M][P]=0$  as  the form $(-,-)$ is non-degenerate.  Replacing  $[M][P]$ by $\sum_{[V]}[M:V][V][P]$, we obtain the following  equality:
\begin{equation}\label{2014.12.21}\sum_{[V]}\sum_{[M]\in\text{ind}(H)}\lambda_{[M]}[M:V][V][P]=0.\end{equation} Note that the multiplication $[V][P]=[V\otimes P]$ means that $\mathcal {P}$ is a $G_0(H)$-module. Moreover, the $G_0(H)$-module $\mathcal {P}$ is faithful, see \cite[Section 3.1]{Lo}. It follows from (\ref{2014.12.21}) that $\sum_{[V]}\sum_{[M]\in\text{ind}(H)}\lambda_{[M]}[M:V][V]=0$.
Thus, $\sum_{[M]\in\text{ind}(H)}\lambda_{[M]}[M]\in\ker\varphi$. \qed

Now we turn to the special element $\delta_{[\kk]}$, which plays an important role in the study of the Green ring $r(H)$. For any indecomposable module $X$, the elements $[X]$, $\delta_{[\kk]}$ and $\delta_{[X]}$ satisfy the following relations.

\begin{thm}\label{gg19}Let $X$ be an indecomposable $H$-module.
\begin{enumerate}
  \item[(1)] $\kk\nmid X^*\otimes X$ if and only if $[X]\delta_{[\kk]}=0$.
  \item[(2)] $\kk\nmid X\otimes X^*$ if and only if $\delta_{[\kk]}[X]=0$.
  \item[(3)] $\kk\mid X^*\otimes X$ if and only if $[X]\delta_{[\kk]}=\delta_{[X]}$.
  \item[(4)] $\kk\mid X\otimes X^*$ if and only if $\delta_{[\kk]}[X]=\delta_{[X]}$.
\end{enumerate}
\end{thm}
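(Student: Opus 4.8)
The plan is to prove all four equivalences by reducing them to the characterizations already established in Section 2, together with the structural identity $\delta_{[\kk]}=[\tau(\kk)]-[E]+[\kk]$ coming from the almost split sequence $0\to\tau(\kk)\to E\xrightarrow{\sigma}\kk\to0$. The key observation is that multiplying $[X]$ by $\delta_{[\kk]}$ amounts, after tensoring the almost split sequence by $X$, to recording the element $[X\otimes\tau(\kk)]-[X\otimes E]+[X]$ in $r(H)$, which is exactly the alternating sum attached to the short exact sequence \eqref{equ4}. Thus $[X]\delta_{[\kk]}$ and $\delta_{[\kk]}[X]$ are governed by whether the maps $id_X\otimes\sigma$ and $\sigma\otimes id_X$ split, and these are precisely the conditions analyzed in Propositions \ref{gg10}, \ref{gg07}, \ref{gg11} and \ref{gprop8}.

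First I would treat Part (1). Tensoring \eqref{equ2} on the left by $X^*$ (or equivalently using the sequence \eqref{equ4}) gives $[X]\delta_{[\kk]}=[X][\tau(\kk)]-[X][E]+[X]$, and I claim this vanishes exactly when the epimorphism $id_X\otimes\sigma\colon X\otimes E\to X$ splits. Indeed, if $id_X\otimes\sigma$ is a split epimorphism, the sequence \eqref{equ4} splits, so $X\otimes E\cong X\otimes\tau(\kk)\oplus X$ in $H$-mod, forcing $[X][E]=[X][\tau(\kk)]+[X]$ and hence $[X]\delta_{[\kk]}=0$. By Proposition \ref{gg10} the splitting of $id_X\otimes\sigma$ is equivalent to $\kk\nmid X^*\otimes X$, which gives one implication. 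For the converse I would invoke Proposition \ref{gg11}: if $\kk\mid X^*\otimes X$ then $id_X\otimes\sigma$ is right almost split (hence non-split), and then Proposition \ref{gg14} applies to the exact sequence \eqref{equ4} to yield $\delta_{[X]}=[X\otimes\tau(\kk)]-[X\otimes E]+[X]=[X]\delta_{[\kk]}$; since $\delta_{[X]}\neq0$ (it is part of a linearly independent set by Lemma \ref{gg15}(3)), we get $[X]\delta_{[\kk]}\neq0$. This last computation simultaneously proves Part (3). Part (2) and Part (4) follow by the symmetric argument using the sequence \eqref{equ3}, the splitting/right-almost-split dichotomy for $\sigma\otimes id_X$, and Propositions \ref{gg07} and \ref{gprop8} in place of Propositions \ref{gg10} and \ref{gg11}.

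The main obstacle I anticipate is justifying that the right-almost-split property of $id_X\otimes\sigma$ (respectively $\sigma\otimes id_X$) lets us identify $[X]\delta_{[\kk]}$ with $\delta_{[X]}$ rather than merely with the alternating sum of the sequence \eqref{equ4}. This is exactly the content of Proposition \ref{gg14}, whose hypothesis is that the surjection in a short exact sequence ending at an indecomposable non-projective module is right almost split; so I must first verify that $X$ is non-projective under the hypothesis $\kk\mid X^*\otimes X$. This holds because $\kk\mid X^*\otimes X$ forces $\kk$ to be a summand of $X^*\otimes X$, and if $X$ were projective then $X^*\otimes X$ would be projective, making $\kk$ projective and $H$ semisimple, contrary to the standing assumption (made explicit in Remark 2.2 and the setup preceding \eqref{equ2}) that $H$ is non-semisimple. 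With non-projectivity secured, Propositions \ref{gg11}, \ref{gprop8} and \ref{gg14} combine cleanly, and the four equivalences fall out as the two mutually exclusive cases (split versus right almost split) of each one-sided multiplication by $\delta_{[\kk]}$.
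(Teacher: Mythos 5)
Your proof is correct, and its forward implications coincide with the paper's: both use Proposition \ref{gg10} (resp.\ \ref{gg07}) to split the tensored sequence and conclude $[X]\delta_{[\kk]}=0$ (resp.\ $\delta_{[\kk]}[X]=0$), and Proposition \ref{gg11} (resp.\ \ref{gprop8}) plus Proposition \ref{gg14} to identify $[X]\delta_{[\kk]}$ (resp.\ $\delta_{[\kk]}[X]$) with $\delta_{[X]}$; your explicit check that $X$ is non-projective when $\kk\mid X^*\otimes X$ is exactly what legitimizes the appeal to Proposition \ref{gg14}, a point the paper leaves implicit (note also that (\ref{equ4}) comes from tensoring (\ref{equ2}) on the left by $X$, not $X^*$, though your parenthetical corrects this). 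Where you genuinely diverge is in the converse directions. The paper proves them by a formal computation with the associative non-degenerate form $(-,-)$: from $[X]\delta_{[\kk]}=0$ it deduces $0=(([X]\delta_{[\kk]})^*,[X])=(\delta^*_{[\kk]},[X]^*[X])$, which by Lemma \ref{gg17}(1) is the multiplicity of $\kk$ in $X^*\otimes X$, and likewise $[X]\delta_{[\kk]}=\delta_{[X]}$ forces that multiplicity to be $1$. You instead run an exclusion argument: the exhaustive alternatives $\kk\mid X^*\otimes X$ and $\kk\nmid X^*\otimes X$ force $[X]\delta_{[\kk]}=\delta_{[X]}\neq 0$ and $[X]\delta_{[\kk]}=0$ respectively, with nonvanishing supplied by the linear independence of $\{\delta_{[M]}\}$ (Lemma \ref{gg15}(3)), so the converses of (1) and (3) drop out together. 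Your route is more elementary in that it never invokes the form $(-,-)$, but it couples (1) and (3) into a single dichotomy, whereas the paper's form computation settles each converse independently and, as a by-product, literally computes the multiplicity of $\kk$ --- the same mechanism that yields Corollary \ref{gg19.1}. One small caveat: the theorem as stated allows semisimple $H$, where the sequence (\ref{equ2}) does not exist; the paper dispatches this case first (there $\delta_{[\kk]}=[\kk]$, $\delta_{[X]}=[X]$, and $\kk\mid X^*\otimes X$ always holds), while the non-semisimplicity you invoke is a standing assumption of Section 2 that is not imposed where the theorem lives, so your write-up should include that one-line trivial case.
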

\proof If $H$ is semisimple, then $\kk\mid X^*\otimes X$ and $\kk\mid X\otimes X^*$. In this case, Part (3) and Part (4) hold obviously because $\delta_{[\kk]}=[\kk]$ and $\delta_{[X]}=[X]$.  Assume  $H$ is not semisimple, we only show Part (1) and Part (3) and the proofs of Part (2) and Part (4) are similar.

(1) If $\kk\nmid X^*\otimes X$, by Proposition \ref{gg10}, the map $id_X\otimes\sigma$ is a split epimorphism. It follows from (\ref{equ4}) that $[X\otimes E]=[X\otimes\tau(\kk)]+[X]$, and hence $[X]\delta_{[\kk]}=0$. Conversely, if  $[X]\delta_{[\kk]}=0$, then $0=(([X]\delta_{[\kk]})^*,[X])=(\delta_{[\kk]}^*,[X]^*[X]).$ This means that $\kk\nmid X^*\otimes X$.

(3) If $\kk\mid X^*\otimes X$, then the map $id_X\otimes\sigma$ is right almost split by Proposition \ref{gg11}. It follows from (\ref{equ3}) and Proposition \ref{gg14} that $\delta_{[X]}=[X\otimes\tau(\kk)]-[X\otimes E]+[X]=[X]\delta_{[\kk]}.$  Conversely, if $[X]\delta_{[\kk]}=\delta_{[X]}$, then $1=(\delta_{[X]}^*,[X])=(([X]\delta_{[\kk]})^*,[X])=(\delta_{[\kk]}^*,[X]^*[X]).$ It follows that $\kk\mid X^*\otimes X$.
\qed

As an application of Theorem \ref{gg19}, we are able to determine the multiplicity of the trivial module $\kk$ in the decomposition of the tensor product $X\otimes X^*$ and $X^*\otimes X$ respectively. For the case where $H$ is semisimple over the field $\kk$ of characteristic 0, this was done by Zhu \cite[Lemma 1]{Zhu}, see also \cite[Proposition 2.1]{Wi2}.
\begin{cor}\label{gg19.1}
Let $X$ be an indecomposable $H$-module.
\begin{enumerate}
  \item[(1)] The multiplicity of $\kk$ in $X^*\otimes X$ is either 0 or 1.
  \item[(2)] The multiplicity of $\kk$ in $X\otimes X^*$ is either 0 or 1.
\end{enumerate}
\end{cor}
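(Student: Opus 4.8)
The plan is to express the sought multiplicity as a single value of the bilinear form $\langle-,-\rangle_1$, and then to collapse that value using the dichotomy of Theorem \ref{gg19} together with the normalization $\langle[X],\delta_{[X]}\rangle_1=1$. First I would note that $\kk$ is simple, hence indecomposable, so the multiplicity of $\kk$ as a direct summand of $X^*\otimes X$ is exactly the coefficient of $[\kk]$ in the expansion of $[X^*\otimes X]$ in the standard basis $\text{ind}(H)$. By Lemma \ref{gg15}(2) this coefficient equals $\langle[X^*\otimes X],\delta_{[\kk]}\rangle_1=\langle[X^*][X],\delta_{[\kk]}\rangle_1$, so Part (1) reduces to showing this number lies in $\{0,1\}$.

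Next I would apply Theorem \ref{gg19}. Either $\kk\nmid X^*\otimes X$, in which case the multiplicity is $0$ and there is nothing more to prove; or $\kk\mid X^*\otimes X$, and then Theorem \ref{gg19}(3) supplies the relation $[X]\delta_{[\kk]}=\delta_{[X]}$. In the latter case the computation runs
\begin{equation*}
\langle[X^*][X],\delta_{[\kk]}\rangle_1=\langle[X],[X]\delta_{[\kk]}\rangle_1=\langle[X],\delta_{[X]}\rangle_1=1,
\end{equation*}
where the first equality is the adjunction of Proposition \ref{prop22.2}(1) (in the form $\langle[Y]^*[X],[Z]\rangle_1=\langle[X],[Y][Z]\rangle_1$ with $[Y]^*=[X^*]$, hence $[Y]=[X]$), the second is Theorem \ref{gg19}(3), and the last is Lemma \ref{gg15}(1) since $\delta_{[X],[X]}=1$. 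This pins the multiplicity to exactly $1$ and finishes Part (1).

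For Part (2) the argument is symmetric. The multiplicity of $\kk$ in $X\otimes X^*$ is $\langle[X][X^*],\delta_{[\kk]}\rangle_1$, which is $0$ when $\kk\nmid X\otimes X^*$. Otherwise Proposition \ref{prop22.2}(1) rewrites it as $\langle[X],\delta_{[\kk]}[X^{**}]\rangle_1$, and here I would invoke the fact that $\kk\mid X\otimes X^*$ forces $X\cong X^{**}$ (Theorem \ref{thm2.7}(1)); substituting $[X^{**}]=[X]$ and using $\delta_{[\kk]}[X]=\delta_{[X]}$ from Theorem \ref{gg19}(4), another application of Lemma \ref{gg15}(1) again yields $1$.

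The individual steps are all short substitutions, so the only thing that requires care is the bookkeeping in the form manipulation: one must apply the correct one of the two adjunction identities in Proposition \ref{prop22.2}(1) on each side. For $X^*\otimes X$ the relevant identity lands cleanly on $[X]\delta_{[\kk]}$, whereas for $X\otimes X^*$ it produces a double dual $[X^{**}]$. The main (and essentially the only) obstacle is thus remembering that the very hypothesis $\kk\mid X\otimes X^*$ guarantees $X\cong X^{**}$, which is exactly what is needed to absorb that double dual; once this is in hand the whole statement follows by direct computation.
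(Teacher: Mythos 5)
Your proposal is correct and follows essentially the same route as the paper: both express the multiplicity of $\kk$ as a single bilinear-form value and collapse it via the dichotomy of Theorem \ref{gg19}, the paper writing $(\delta^*_{[\kk]},[X^*][X])=(([X]\delta_{[\kk]})^*,[X])$ with the associative form $(-,-)$, which is exactly your computation with $\langle-,-\rangle_1$ and Proposition \ref{prop22.2} unwound. The only cosmetic difference is in Part (2), where you absorb the double dual $[X^{**}]$ explicitly via Theorem \ref{thm2.7}(1) while the paper disposes of it by symmetry; both are valid.
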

\proof (1) We only prove Part (1), the proof of Part (2) is similar.
By Theorem \ref{gg19}, we have
\begin{linenomath}
$$(\delta^*_{[\kk]},[X^*][X])=(([X]\delta_{[\kk]})^*,[X])=\begin{cases}
0, & \kk\nmid X^*\otimes X,\\
1, & \kk\mid X^*\otimes X,
\end{cases}$$
\end{linenomath}
as desired.
\qed

The following result can be deduced from Theorem \ref{gg19}.
\begin{prop}\label{gprop3} Let $0\rightarrow X\rightarrow Y\rightarrow Z\rightarrow0$ be an almost split sequence of $H$-modules.
\begin{enumerate}
 \item[(1)] $\kk\mid Z\otimes Z^*$ if and only if $\kk\mid X\otimes X^*$.
 \item[(2)] $\kk\mid Z^*\otimes Z$ if and only if $\kk\mid X^*\otimes X$.
\end{enumerate}
\end{prop}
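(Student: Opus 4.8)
The plan is to connect the tensor products appearing in Proposition~\ref{gprop3} to the special element $\delta_{[\kk]}$ via Theorem~\ref{gg19}, and then exploit the relation between the almost split sequence ending at $Z$ and the element $\delta_{[Z]}$ in $r(H)$. Since the sequence $0\rightarrow X\rightarrow Y\rightarrow Z\rightarrow0$ is almost split, by the definition of $\delta_{[Z]}$ we have $\delta_{[Z]}=[X]-[Y]+[Z]$ in $r(H)$, and moreover $X\cong\tau(Z)$. The duality operator $*$ sends almost split sequences to almost split sequences (since $*$ is an exact contravariant equivalence on $H$-mod), so $0\rightarrow Z^*\rightarrow Y^*\rightarrow X^*\rightarrow0$ is the almost split sequence \emph{starting} at $Z^*$, equivalently the almost split sequence ending at $X^*$ is governed by $\delta_{[X^*]}$ with $\tau(X^*)\cong Z^*$. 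I would first record these structural facts.

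For Part~(1), I would prove it by the chain of equivalences
\begin{linenomath}
$$\kk\mid Z\otimes Z^*\iff \delta_{[\kk]}[Z]=\delta_{[Z]}\iff \kk\mid X\otimes X^*,$$
\end{linenomath}
reading off the outer two equivalences from Theorem~\ref{gg19}(4) applied to $Z$ and to $X$ respectively. The real content is the middle step: I must show $\delta_{[\kk]}[Z]=\delta_{[Z]}$ is equivalent to $\delta_{[\kk]}[X]=\delta_{[X]}$. The natural route is to left-multiply the identity $\delta_{[Z]}=[X]-[Y]+[Z]$ by the central unit $\delta_{[P_{\kk}]}$ from Lemma~\ref{prop1515}, using $[M]\delta_{[P_{\kk}]}=[P_M]-[\Omega M]$ and the relation $\delta_{[M]}\delta^*_{[P_{\kk}]}=-\delta_{[\Omega^{-1}M]}$ (for $M$ non-projective) from Lemma~\ref{g102}(2), together with $X\cong\tau(Z)=\Omega^2 Z$ (Auslander--Reiten translation is $\Omega^2$ for self-injective algebras, which $H$ is). Applying these syzygy shift formulas should convert the statement $\delta_{[\kk]}[Z]=\delta_{[Z]}$ into the corresponding statement one syzygy step over, namely $\delta_{[\kk]}[X]=\delta_{[X]}$, since $\delta_{[\kk]}$ and the $\delta$'s all transform compatibly under multiplication by $\delta_{[P_{\kk}]}$.

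An alternative, and probably cleaner, approach is purely homological: $\kk\mid Z\otimes Z^*$ is equivalent to $Z$ being a splitting trace module, i.e.\ to the almost split sequence map $E\otimes Z\xrightarrow{\sigma\otimes\mathrm{id}_Z}Z$ being right almost split (Proposition~\ref{gprop8}). Under the duality and syzygy functors, being a splitting trace module is a property invariant along the almost split sequence: $\tau$ and $*$ commute up to natural isomorphism with the formation of these sequences, and $\kk\mid X\otimes X^*$ with $X=\tau(Z)$ should correspond to the \emph{same} categorical condition shifted by $\tau$. I would make this precise using $\tau\cong\Omega^2$ and the observation that $\kk\mid M\otimes M^*$ forces $M\cong M^{**}$ (Corollary~\ref{gprop2}), so the conditions propagate cleanly. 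Part~(2) follows by the entirely symmetric argument, replacing $X\otimes X^*$ by $X^*\otimes X$ and using Theorem~\ref{gg19}(3) and Proposition~\ref{gg11} in place of their right-handed counterparts.

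The main obstacle I anticipate is keeping the bookkeeping straight when translating between $Z$ and $X=\tau(Z)$: one must verify carefully that multiplication by the central unit $\delta_{[P_{\kk}]}$ (or equivalently applying $\Omega^{\pm1}$) carries the identity $\delta_{[\kk]}[Z]=\delta_{[Z]}$ exactly onto $\delta_{[\kk]}[X]=\delta_{[X]}$ with no stray projective summands, and that the non-projectivity hypotheses needed for Lemma~\ref{g102}(2) hold for $Z$, $X$, and $\kk$ (here $H$ non-semisimple makes $\kk$ non-projective). The cleanest insurance against sign or summand errors is to pair everything against the dual basis $\delta^*_{[M]}$ using the non-degenerate associative form $(-,-)$ of Lemma~\ref{gg17}, reducing each claimed identity to a finite comparison of the integers $(\delta^*_{[\kk]},[Z]^*[Z])$ and $(\delta^*_{[\kk]},[X]^*[X])$, which by Corollary~\ref{gg19.1} each lie in $\{0,1\}$.
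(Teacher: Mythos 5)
Your plan has a genuine gap: both of your routes rest on the identification $\tau\cong\Omega^2$, which you justify by saying ``Auslander--Reiten translation is $\Omega^2$ for self-injective algebras.'' That is false in general: for a self-injective algebra one has $\tau\cong\Omega^2\circ\nu$ where $\nu$ is the Nakayama functor, and $\nu$ is trivial (up to inner twist) only for \emph{symmetric} algebras. A finite dimensional Hopf algebra is Frobenius but need not be symmetric --- it is symmetric essentially only when it is unimodular with $S^2$ inner --- and the algebras this paper cares about (Taft algebras, the Radford Hopf algebras of Section 6) are precisely not symmetric, so there $X\cong\tau(Z)\not\cong\Omega^2 Z$. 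The syzygy bookkeeping you propose is itself sound: multiplying $\delta_{[\kk]}[Z]=\delta_{[Z]}$ by $\delta_{[P_{\kk}]}^2$ and using $[M]\delta_{[P_{\kk}]}=[P_M]-[\Omega M]$, $\delta_{[M]}\delta_{[P_{\kk}]}=-\delta_{[\Omega M]}$, and $\delta_{[\kk]}[P]=0$ for $P$ projective does prove that the splitting-trace property is $\Omega$-invariant. But that only transports the condition from $Z$ to $\Omega^2Z$, not to $X=\tau(Z)$, and closing the remaining gap (invariance under the Nakayama twist $\nu$) is not addressed; your ``alternative homological'' paragraph asserts rather than proves that the condition ``propagates cleanly'' under $\tau$, which is exactly the statement being proved. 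Likewise your final ``insurance'' reduction to comparing the integers $(\delta^*_{[\kk]},[Z][Z]^*)$ and $(\delta^*_{[\kk]},[X][X]^*)$ names the right quantities but supplies no mechanism for showing they are equal.

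The fact that repairs this --- and that the paper actually uses --- is one you record in your preamble but never exploit: dualizing the almost split sequence gives the almost split sequence $0\rightarrow Z^*\rightarrow Y^*\rightarrow X^*\rightarrow0$, whence $\delta^*_{[Z]}=\delta_{[X^*]}$. This identity links $X$ and $Z$ directly through the associative form $(-,-)$, with no appeal to any formula for $\tau$. Concretely, if $\kk\mid Z\otimes Z^*$ but $\kk\nmid X\otimes X^*$, then $\kk\nmid X^{**}\otimes X^*$ (Theorem \ref{thm2.7}), so $[X^*]\delta_{[\kk]}=0$ by Theorem \ref{gg19}, while $\delta_{[Z]}=\delta_{[\kk]}[Z]$; but then
\begin{linenomath}
$$1=(\delta^*_{[X^*]},[X^*])=(\delta^{**}_{[Z]},[X^*])=([X^*],\delta_{[Z]})=([X^*]\delta_{[\kk]},[Z])=0,$$
\end{linenomath}
a contradiction, and the converse is symmetric. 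If you want to salvage your $\Omega$-shift computation instead, you would have to additionally prove that the splitting-trace property is invariant under the Nakayama functor (for Hopf algebras, tensoring by the one-dimensional modular-function module together with an $S^{\pm2}$ twist), which is extra work the $\delta^*_{[Z]}=\delta_{[X^*]}$ argument avoids entirely.
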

\proof We only prove Part (1) because Part (2) follows from Part (1). Applying the duality functor $*$ to the almost split sequence $0\rightarrow X\rightarrow Y\rightarrow Z\rightarrow0$, we get the almost split sequence $0\rightarrow Z^*\rightarrow Y^*\rightarrow X^*\rightarrow0$, see \cite[P.144]{ARS}. Note that both $Z$ and $X^*$ are indecomposable \cite[Proposition 1.14, ChV]{ARS}. This implies that $\delta_{[Z]}^*=\delta_{[X^*]}$. If $\kk\mid Z\otimes Z^*$, by Theorem \ref{gg19}, $\delta_{[\kk]}[Z]=\delta_{[Z]}$. We claim that $\kk\mid X\otimes X^*$. Otherwise, $\kk\nmid X\otimes X^*$, and hence $\kk\nmid X^{**}\otimes X^{*}$. This leads to $[X^*]\delta_{[\kk]}=0$ by Theorem \ref{gg19}. However,
\begin{linenomath}
$$1=(\delta^*_{[X^*]},[X^*])=(\delta^{**}_{[Z]},[X^*])=([X^*],\delta_{[Z]})=([X^*]\delta_{[\kk]},[Z])=0,$$
\end{linenomath} a contradiction. Conversely, if $\kk\mid X\otimes X^*$, then $\kk\mid X^{**}\otimes X^*$. This yields that $[X^*]\delta_{[\kk]}=\delta_{[X^*]}$. We claim that $\kk\mid Z\otimes Z^*$. Otherwise, $\delta_{[\kk]}[Z]=0$ by Theorem \ref{gg19}. Then \begin{linenomath} $$1=(\delta^*_{[Z]},[Z])=(\delta_{[X^*]},[Z])=([X^*],\delta_{[\kk]}[Z])=0,$$\end{linenomath}  a contradiction. \qed

Denote by $\mathcal {J}_{+}$ and $\mathcal {J}_{-}$ the subgroups of $r(H)$ respectively  as follows:
\begin{linenomath} $$\mathcal {J}_{+}:=\mathbb{Z}\{\delta_{[M]}\mid [M]\in\text{ind}(H)\ \text{and}\ \kk\mid M\otimes M^*\},$$\end{linenomath}
\begin{linenomath} $$\mathcal {J}_{-}:=\mathbb{Z}\{\delta_{[M]}\mid [M]\in\text{ind}(H)\ \text{and}\ \kk\mid M^*\otimes M\}.$$\end{linenomath}
By Theorem \ref{gg19},  $\mathcal {J}_{+}$ (resp. $\mathcal {J}_{-}$)  is a right (resp. left)  ideal of $r(H)$ generated by $\delta_{[\kk]}$.  Moreover,  we have   $\mathcal {J}^{*}_{+}=\mathcal {J}_{-}$ and $\mathcal {J}^{*}_{-}=\mathcal {J}_{+}$ by Proposition \ref{gprop3}.

Now let $\mathcal {P}_{+}$ and $\mathcal {P}_{-}$ denote the subgroups of $r(H)$ as follows:
\begin{linenomath} $$\mathcal {P}_{+}:=\mathbb{Z}\{[M]\in\text{ind}(H)\mid \kk\nmid M\otimes M^*\},$$
$$\mathcal {P}_{-}:=\mathbb{Z}\{[M]\in\text{ind}(H)\mid \kk\nmid M^*\otimes M\}.$$\end{linenomath}
It follows from Proposition \ref{gg13} that $\mathcal {P}_{+}$ is a right ideal of $r(H)$ and $\mathcal {P}_{-}$ is a left ideal of $r(H)$.
Obviously, $\mathcal {P}_{+}^*=\mathcal {P}_{-}$ and $\mathcal {P}_{-}^*=\mathcal {P}_{+}$.

According to the associativity and the non-degeneracy of the form $(-,-)$, we have $\mathcal {P}_{-}x=0$ if and only if $(\mathcal {P}_{-},x)=0$ if and only if  $(x,\mathcal {P}_{-})=0$ since $\mathcal {P}_{-}=\mathcal {P}_{-}^{**}$.  Similarly, $x\mathcal {P}_{+}=0$ if and only if $(x,\mathcal {P}_{+})=0$ if and only if $(\mathcal {P}_{+},x)=0$. Thus, the right annihilator $r(\mathcal {P}_{-})$ of $\mathcal {P}_{-}$ and left annihilator $l(\mathcal {P}_{+})$ of $\mathcal {P}_{+}$ can be expressed respectively as follows:
\begin{linenomath} $$r(\mathcal {P}_-):=\{x\in r(H)\mid(x,y)=0\ \textrm{for}\ y\in\mathcal {P}_-\},$$
$$l(\mathcal {P}_+):=\{x\in r(H)\mid(y,x)=0\ \textrm{for}\ y\in\mathcal {P}_+\}.$$\end{linenomath}
The relations between these one-sided ideals of $r(H)$ can be described as follows.

\begin{prop}\label{gg22}
Let $H$ be of finite representation type.
\begin{enumerate}
  \item[(1)] $\mathcal {J}_{+}=r(\mathcal {P}_-)$.
  \item[(2)] $\mathcal {J}_{-}=l(\mathcal {P}_+)$.
\end{enumerate}
\end{prop}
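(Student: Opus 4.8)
The plan is to establish part (1) by a direct computation with the pair of dual bases attached to the form $(-,-)$, and then to deduce part (2) formally by applying the duality operator $\ast$. Throughout I use that $H$ is of finite representation type, so that $\{\delta_{[M]}\mid [M]\in\mathrm{ind}(H)\}$ is a $\mathbb{Z}$-basis of $r(H)$ (Lemma \ref{gg15}(4)), hence so is $\{\delta^*_{[M]}\mid [M]\in\mathrm{ind}(H)\}$, and that every $x\in r(H)$ expands as $x=\sum_{[N]\in\mathrm{ind}(H)}(x,[N])\,\delta^*_{[N]}$ by Remark \ref{rem2015}. I shall also invoke the already-recorded set-level identities $\mathcal{J}_+^*=\mathcal{J}_-$, $\mathcal{J}_-^*=\mathcal{J}_+$ and $\mathcal{P}_+^*=\mathcal{P}_-$, together with the descriptions $r(\mathcal{P}_-)=\{x\mid\mathcal{P}_-x=0\}$ and $l(\mathcal{P}_+)=\{x\mid x\mathcal{P}_+=0\}$ supplied by the equivalences preceding the statement.

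For (1), I would simply read off $r(\mathcal{P}_-)$ in the dual basis $\{\delta^*_{[N]}\}$. Since $\mathcal{P}_-$ is generated as an abelian group by those $[M]$ with $\kk\nmid M^*\otimes M$, and $(x,-)$ is linear, an element $x$ lies in $r(\mathcal{P}_-)$ exactly when $(x,[M])=0$ for all such $[M]$. But $(x,[N])$ is precisely the coefficient of $\delta^*_{[N]}$ in the expansion above, so this vanishing says that $x$ is a $\mathbb{Z}$-combination of the $\delta^*_{[N]}$ with $\kk\mid N^*\otimes N$; that is, $r(\mathcal{P}_-)=\mathbb{Z}\{\delta^*_{[N]}:\ \kk\mid N^*\otimes N\}$. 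The latter group is the image under $\ast$ of the generating set of $\mathcal{J}_-$, hence equals $\mathcal{J}_-^*=\mathcal{J}_+$, giving $r(\mathcal{P}_-)=\mathcal{J}_+$.

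For (2), I would avoid recomputing a right dual basis and instead transport (1) through $\ast$. Because $\ast$ is an anti-automorphism, $x\mathcal{P}_+=0$ is equivalent to $\mathcal{P}_+^*\,x^*=0$, i.e.\ to $\mathcal{P}_-\,x^*=0$, i.e.\ to $x^*\in r(\mathcal{P}_-)$. By part (1) this means $x^*\in\mathcal{J}_+=\mathcal{J}_-^*$, so $x^*=y^*$ for some $y\in\mathcal{J}_-$; since $\ast$ is a bijection on $r(H)$, this forces $x=y\in\mathcal{J}_-$. Running the same chain backwards gives the reverse inclusion, whence $l(\mathcal{P}_+)=\mathcal{J}_-$.

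The only genuinely delicate point is bookkeeping rather than a deep obstacle: the annihilators are naturally described in the $\delta^*$-basis, while $\mathcal{J}_\pm$ are defined in the $\delta$-basis, and these are matched through the previously established equalities $\mathcal{J}_\mp^*=\mathcal{J}_\pm$. I would be careful that $\ast$ need not be an involution when $S^2$ is not inner, so in step (2) I only use that $\ast$ is a \emph{bijective} anti-automorphism together with the set equality $\mathcal{J}_+=\mathcal{J}_-^*$, rather than cancelling a double dual.
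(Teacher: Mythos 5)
Your proof is correct, and it differs from the paper's in two modest but genuine ways, so a comparison is worthwhile. The second half of your part (1) --- expanding $x\in r(\mathcal{P}_-)$ as $x=\sum_{[M]}(x,[M])\,\delta^*_{[M]}$, noting the coefficients vanish at every $[M]$ with $\kk\nmid M^*\otimes M$, and identifying the remaining span with $\mathcal{J}_-^*=\mathcal{J}_+$ --- is exactly the paper's argument for the inclusion $r(\mathcal{P}_-)\subseteq\mathcal{J}_+$. For the opposite inclusion $\mathcal{J}_+\subseteq r(\mathcal{P}_-)$, however, the paper computes directly with Theorem \ref{gg19}: for indecomposables with $\kk\mid X\otimes X^*$ and $\kk\nmid Y^*\otimes Y$ it evaluates $(\delta_{[X]},[Y])=(\delta_{[\kk]}[X],[Y])=([Y^{**}]\delta_{[\kk]},[X])=(0,[X])=0$, making the role of the central element $\delta_{[\kk]}$ explicit; you instead obtain both inclusions simultaneously from biorthogonality, i.e.\ from $(\delta^*_{[N]},[M])=\delta_{[N],[M]}$ (Lemma \ref{gg17}(1)) together with the recorded identity $\mathcal{J}_-^*=\mathcal{J}_+$. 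This is legitimate, since that identity is established before the proposition (via Proposition \ref{gprop3}), though you should be aware that Theorem \ref{gg19} still sits inside the proof of Proposition \ref{gprop3}, so it has not truly left the logical chain --- you have only repackaged it. Second, where the paper dismisses part (2) as ``similar'' (meaning one reruns the mirrored computation), you transport part (1) formally through $\ast$, using $x\mathcal{P}_+=0\iff\mathcal{P}_+^*x^*=0\iff\mathcal{P}_-x^*=0$ and $\mathcal{P}_+^*=\mathcal{P}_-$; your explicit care to use only that $\ast$ is a bijective anti-automorphism together with the set equality $\mathcal{J}_+=\mathcal{J}_-^*$, rather than assuming $\ast$ is an involution, is exactly right here, since $S^2$ need not be inner. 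Net assessment: your route buys a slightly more economical and uniform argument (one biorthogonality computation plus formal dualization), while the paper's buys a concrete illustration of how $\delta_{[\kk]}$ governs the annihilator structure; both rest on the same preliminary facts.
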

\proof It is sufficient to prove Part (1) and the proof of Part (2) is similar. For any two indecomposable modules $X$ and $Y$ satisfying $\kk\mid X\otimes X^*$ and $\kk\nmid Y^*\otimes Y$, by Theorem \ref{gg19}, we have
\begin{linenomath} $$(\delta_{[X]},[Y])=(\delta_{[\kk]}[X],[Y])=([Y^{**}]\delta_{[\kk]},[X])=(0,[X])=0.$$\end{linenomath}
This implies that $\mathcal {J}_{+}\subseteq r(\mathcal {P}_-)$. For any $x\in r(\mathcal {P}_-),$
\begin{linenomath}
\begin{align*}
x&=\sum_{[M]\in\textrm{ind}(H)}(x,[M])\delta^*_{[M]}\ \ \text{by\ Remark}\ \ref{rem2015} (1)\\
&=\sum_{\kk\mid M^*\otimes M}(x,[M])\delta^*_{[M]}\ \ \text{as}\ x\in r(\mathcal {P}_-).
\end{align*}
\end{linenomath}
We have that $x\in \mathcal {J}_{-}^*=\mathcal {J}_{+}$, and hence $r(\mathcal {P}_-)\subseteq \mathcal {J}_{+}$.
\qed

In the following, we shall use these one-sided ideals to get information about the nilpotent radical and central primitive idempotents of $r(H)$.
We first need the following useful lemma.
\begin{lem}\label{gg19.2} For any $x\in r(H)$, we have the following:
\begin{enumerate}
  \item[(1)] If $xx^*=0$, then $x\in\mathcal {P}_{+}$.
  \item[(2)] If $x^*x=0$, then $x\in\mathcal {P}_{-}$.
\end{enumerate}
\end{lem}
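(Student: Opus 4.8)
The plan is to prove the two statements by dualizing and using the characterizations from Theorem \ref{gg19} together with the non-degeneracy and associativity of the form $(-,-)$. I would prove only Part (1), since Part (2) follows by an identical argument after applying the duality operator $\ast$. The strategy is to expand $x$ in the standard basis $\text{ind}(H)$, assume $xx^*=0$, and show that every indecomposable $[M]$ appearing with nonzero coefficient must satisfy $\kk\nmid M\otimes M^*$, so that $x$ lies in $\mathcal{P}_+$ by definition.

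First I would write $x=\sum_{[M]\in\text{ind}(H)}\lambda_{[M]}[M]$. The key idea is to test the hypothesis $xx^*=0$ against the special element $\delta_{[\kk]}$ rather than against an arbitrary element. Recall from Theorem \ref{gg19} that the condition $\kk\mid M\otimes M^*$ is equivalent to $\delta_{[\kk]}[M]=\delta_{[M]}$, while $\kk\nmid M\otimes M^*$ is equivalent to $\delta_{[\kk]}[M]=0$. So I would consider the product $\delta_{[\kk]}\,x\,x^*=0$, or more directly pair the equation $xx^*=0$ with a suitable element using the form. Concretely, since $(-,-)$ is non-degenerate and associative (Lemma \ref{gg16}), from $xx^*=0$ I obtain $0=(xx^*,\delta_{[\kk]})=(x,x^*\delta_{[\kk]})$, and I want to convert this into a statement forcing the "bad" coefficients to vanish.

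The cleaner route I would actually pursue is to compute $(x,x)$-type expressions tied to $\delta_{[\kk]}$. Using associativity, $0=(\delta_{[\kk]}xx^*,[\kk])$, and I would try to recognize $\delta_{[\kk]}xx^*$ or a related quantity as a sum of nonnegative contributions indexed by the $[M]$ with $\kk\mid M\otimes M^*$, so that vanishing of the total forces each such coefficient to vanish. The natural positivity comes from pairing $x$ against itself: since $([X],[Y])=\dim_{\kk}\text{Hom}_H(X,Y^*)$, the diagonal quantity $(\delta_{[\kk]}[M],[M])$ equals $(\delta_{[M]},[M])=1$ exactly when $\kk\mid M\otimes M^*$ and $0$ otherwise, by Theorem \ref{gg19}(4) and Lemma \ref{gg17}(1). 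Thus I would compute $(\delta_{[\kk]}x,x^{\ast\ast})$ or the analogous self-pairing, expand in the basis, and use the orthogonality relations $(\delta_{[\kk]}[M],[N])=(\delta_{[M]},[N])=\delta_{[M],[N]}$ (valid for those $M$ with $\kk\mid M\otimes M^*$) to isolate $\sum_{\kk\mid M\otimes M^*}\lambda_{[M]}^2=0$, forcing all such $\lambda_{[M]}=0$.

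The main obstacle I anticipate is bookkeeping the interaction between the duality operator $\ast$ and the left/right asymmetry of the conditions: $xx^*=0$ mixes $x$ and its dual, while the relevant characterization $\delta_{[\kk]}[M]=\delta_{[M]}$ concerns $\kk\mid M\otimes M^*$ (a right-handed condition matching $\mathcal{P}_+$), and I must verify that the cross terms in the expansion of the self-pairing either vanish or organize into a manifestly nonnegative sum rather than an indefinite one. Resolving this will require carefully choosing whether to pair with $\delta_{[\kk]}$ on the left or right, and invoking $([X],[Y])=([Y]^{**},[X])$ from Lemma \ref{gg16}(2) to move starred terms across the form; once the self-pairing is seen to equal a sum of squares of the coefficients $\lambda_{[M]}$ over the indices with $\kk\mid M\otimes M^*$, the conclusion that those coefficients vanish, and hence $x\in\mathcal{P}_+$, is immediate.
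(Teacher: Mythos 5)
Your high-level plan is in fact the paper's: both arguments come down to showing that the coefficient of $[\kk]$ in the expansion of $xx^*$ over $\text{ind}(H)$ equals $\sum_{\kk\mid M\otimes M^*}\lambda_{[M]}^2$, whence $xx^*=0$ forces these integer coefficients to vanish and $x\in\mathcal{P}_{+}$. But the concrete mechanism you give for producing this sum of squares rests on a false identity: the ``orthogonality relations'' $(\delta_{[\kk]}[M],[N])=(\delta_{[M]},[N])=\delta_{[M],[N]}$. Lemma \ref{gg17}(1) reads $(\delta^{*}_{[Z]},[X])=\delta_{[Z],[X]}$, and the star on the $\delta$ is essential. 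Without it one computes, using $([X],[Y])=\langle[X],[Y]^*\rangle_1$, the isomorphism $\text{Hom}_H(U,V)\cong\text{Hom}_H(V^*,U^*)$, and $\delta^{*}_{[M]}=\delta_{[(\tau M)^*]}$ (see the proof of Proposition \ref{gprop3}), that $(\delta_{[M]},[N])=\delta_{[N^{**}],[(\tau M)^*]}$. So your claimed $(\delta_{[\kk]}[M],[M])=(\delta_{[M]},[M])=1$ would require $\tau M\cong M^{*}$, which fails in general even under the restriction $\kk\mid M\otimes M^*$ (e.g.\ $M=\kk$ over a group algebra whose trivial module is not $\Omega$-periodic of period dividing $2$: there $\kk\mid\kk\otimes\kk^*$ but $(\delta_{[\kk]},[\kk])=0$). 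Thus the step you rely on to ``isolate $\sum_{\kk\mid M\otimes M^*}\lambda_{[M]}^2=0$'' breaks as written, and the positivity you need is asserted (``once the self-pairing is seen to equal a sum of squares'') rather than established.

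The repair is close to what you sketched and reproduces the paper's own proof of Corollary \ref{gg19.1}. By Lemma \ref{gg17}(2), the coefficient of $[\kk]$ in $xx^*$ is $(\delta^{*}_{[\kk]},xx^*)$ --- pair with $\delta^{*}_{[\kk]}$, not $\delta_{[\kk]}$. For indecomposables $M,N$, associativity and $([X],[Y])=([Y]^{**},[X])$ (Lemma \ref{gg16}) give $(\delta^{*}_{[\kk]},[M][N]^*)=([N]^{***}\delta^{*}_{[\kk]},[M])=((\delta_{[\kk]}[N^{**}])^{*},[M])$; by Theorem \ref{gg19}(2),(4) applied to $N^{**}$ (note $\kk\mid N^{**}\otimes N^{***}$ iff $\kk\mid N\otimes N^{*}$, since $N^{**}\otimes N^{***}=(N\otimes N^{*})^{**}$), this equals $(\delta^{*}_{[N^{**}]},[M])=\delta_{[N^{**}],[M]}$ when $\kk\mid N\otimes N^{*}$ and $0$ otherwise. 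All cross terms then vanish and the coefficient is $\sum_{\kk\mid N\otimes N^{*}}\lambda_{[N]}\lambda_{[N^{**}]}$; to recognize this as $\sum_{\kk\mid N\otimes N^{*}}\lambda_{[N]}^{2}$ you still need $N\cong N^{**}$ whenever $\kk\mid N\otimes N^{*}$, which is precisely Theorem \ref{thm2.7}(1) and is missing from your outline. (The paper shortcuts the whole calculation by quoting Theorem \ref{thm2.7}(1) --- so that $\kk\mid M\otimes N^{*}$ forces $M\cong N$, killing cross terms --- together with Corollary \ref{gg19.1}, which bounds the multiplicity of $\kk$ in $M\otimes M^{*}$ by $1$.) With the starred dual-basis relation and the $N\cong N^{**}$ input supplied, your argument closes; as written, it does not.
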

\proof It suffices to prove Part (1), the proof of Part (2) is similar. Suppose
\begin{linenomath} $$x=\sum_{\kk\mid M\otimes M^*}\lambda_{[M]}[M]+
\sum_{\kk\nmid M\otimes M^*}\lambda_{[M]}[M],$$
\end{linenomath} where $\lambda_{[M]}\in\mathbb{Z}$. By Theorem \ref{thm2.7} (1) and Corollary \ref{gg19.1}, the coefficient of the identity $[\kk]$ in the linear expression of $xx^*$ with respect to the basis $\text{ind}(H)$ is $\sum_{\kk\mid M\otimes M^*}\lambda_{[M]}^2$. Thus, if $xx^*=0$, then $\lambda_{[M]}=0$ for any indecomposable module $M$ satisfying  $\kk\mid M\otimes M^*$. Hence $x=\sum_{\kk\nmid M\otimes M^*}\lambda_{[M]}[M]\in\mathcal {P}_{+}.$
\qed

\begin{prop}\label{gg20}
The nilpotent radical of $r(H)$ is contained in $\mathcal {P}_{+}\cap\mathcal {P}_{-}$.
\end{prop}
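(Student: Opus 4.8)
The plan is to reduce the statement, via the non-degenerate associative form $(-,-)$ and the special element $\delta_{[\kk]}$, to a positivity-versus-nilpotency assertion, and then to close that with a spectral argument. First I would record two clean consequences of Theorem~\ref{gg19} together with the linear independence of the $\delta_{[M]}$ (Lemma~\ref{gg15}(3)): writing $x=\sum_{[M]\in\mathrm{ind}(H)}\lambda_{[M]}[M]$, Theorem~\ref{gg19} gives $\delta_{[\kk]}[M]=\delta_{[M]}$ when $\kk\mid M\otimes M^*$ and $0$ otherwise, and $[M]\delta_{[\kk]}=\delta_{[M]}$ when $\kk\mid M^*\otimes M$ and $0$ otherwise. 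Hence $\delta_{[\kk]}x=\sum_{\kk\mid M\otimes M^*}\lambda_{[M]}\delta_{[M]}$ and $x\delta_{[\kk]}=\sum_{\kk\mid M^*\otimes M}\lambda_{[M]}\delta_{[M]}$, so by linear independence of the $\delta_{[M]}$ one gets $\delta_{[\kk]}x=0\iff x\in\mathcal{P}_+$ and $x\delta_{[\kk]}=0\iff x\in\mathcal{P}_-$. It therefore suffices to prove $\delta_{[\kk]}x=0=x\delta_{[\kk]}$ for every $x$ in the nilpotent radical $N$.

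Next I would use that $N$ is a two-sided ideal and is stable under the anti-automorphism $*$ (which preserves the prime/nilpotent radical), so that for $x\in N$ both $xx^*$ and $x^*x$ lie in $N$ and are nilpotent. The computation inside the proof of Lemma~\ref{gg19.2} shows that the coefficient of $[\kk]$ in $xx^*$ equals $\sum_{\kk\mid M\otimes M^*}\lambda_{[M]}^2$ and that of $x^*x$ equals $\sum_{\kk\mid M^*\otimes M}\lambda_{[M]}^2$; each is a non-negative integer vanishing precisely when $x\in\mathcal{P}_+$, resp. $x\in\mathcal{P}_-$. Since $\mathcal{P}_+$ is a right ideal and $\mathcal{P}_-$ a left ideal the two cases are symmetric, so the whole proposition collapses to a single implication: a nilpotent element of the ``positive'' shape $xx^*$ must have vanishing $[\kk]$-coefficient. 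Observe also that this coefficient is exactly $\langle x,x\rangle$ for the symmetric $\mathbb{Z}$-bilinear form $\langle a,b\rangle:=(\,[\kk]\text{-coefficient of }ab^*)$, which is positive semidefinite with radical $\mathcal{P}_+$: by Theorem~\ref{thm2.7}(1), $\kk\mid M\otimes N^*$ for indecomposable $M,N$ forces $M\cong N$, so $\langle-,-\rangle$ is diagonal with entries in $\{0,1\}$ on the basis $\mathrm{ind}(H)$.

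I expect the main obstacle to be precisely this final implication, namely forcing the non-negative integer $\langle x,x\rangle=\sum_{\kk\mid M\otimes M^*}\lambda_{[M]}^2$ to vanish from the nilpotency of $xx^*$. The naive statement ``nilpotent $\Rightarrow$ identity-coefficient zero'' is false for arbitrary elements of a non-commutative ring, so the positivity carried by the shape $xx^*$ is essential; the genuine technical difficulty is that $*$ is not an involution and the functional $\epsilon(\cdot)$ is only a $\nu$-twisted trace with Nakayama twist $\nu=*^{-2}$, reflecting $S^2\neq\mathrm{id}$, which obstructs the clean adjointness $\langle xa,b\rangle=\langle a,x^*b\rangle$ one would want. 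I would resolve this by exploiting that $S^2$, hence $*$, has finite order (Radford's $S^4$-formula and the Nichols--Zoller theorem): averaging over the finite $*$-orbit one builds a positive definite inner product on $r(H)\otimes_{\mathbb{Z}}\mathbb{R}$ for which the positive-type element $xx^*$ (or a $*$-symmetrized positive analogue of it) acts as a positive semidefinite operator in the regular representation. A positive semidefinite operator that is also nilpotent is zero, which forces $\langle x,x\rangle=0$ and hence $x\in\mathcal{P}_+$; running the identical argument for $x^*x$ with the companion form (radical $\mathcal{P}_-$) gives $x\in\mathcal{P}_-$, and therefore $N\subseteq\mathcal{P}_+\cap\mathcal{P}_-$.
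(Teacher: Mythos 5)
Your reductions are fine and in fact coincide with the paper's setup: the equivalence $x\in\mathcal{P}_{+}\Leftrightarrow\delta_{[\kk]}x=0$ via Theorem \ref{gg19} and Lemma \ref{gg15}(3), and the sum-of-squares formula ``coefficient of $[\kk]$ in $xx^{*}$ equals $\sum_{\kk\mid M\otimes M^{*}}\lambda_{[M]}^{2}$'' from Lemma \ref{gg19.2}, are exactly the ingredients the paper uses. The gap is the final step, which is the whole content of the proposition, and your sketch of it does not close. Concretely: (i) no averaged form of the kind you describe is positive definite, because for every $i$ the class of a projective module $P$ satisfies $\epsilon\bigl([P]^{**i}([P]^{**i})^{*}\bigr)=0$ (duals of projectives are projective and $\kk\nmid P\otimes P^{*}$ when $H$ is non-semisimple), so $[P]$ remains isotropic for each orbit translate and hence for the average; (ii) more fundamentally, averaging the form over powers of $**$ does not restore the adjointness $L_{x}^{\dagger}=L_{x^{*}}$: the defect $\epsilon(xu)-\epsilon(ux^{**})$ reappears verbatim in each summand with $x$ replaced by $x^{**i}$, so you never actually exhibit an inner product in which $L_{xx^{*}}$ is self-adjoint, let alone positive semidefinite; (iii) if your argument did go through, it would prove $xx^{*}=0$ for every $x$ in the radical --- strictly stronger than the proposition and supported by nothing in the paper --- which should itself raise suspicion. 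Note also that your formulation ``a nilpotent \emph{element} of shape $xx^{*}$ has vanishing $[\kk]$-coefficient'' is too weak a hypothesis for any argument of this type: nilpotence of the single element $xx^{*}$ gives no control over the further products one needs, and what is really used is that $x$ lies in a nilpotent \emph{ideal}.

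The paper closes the gap with an elementary iteration that sidesteps adjointness entirely, and it is worth seeing how little is needed. Set $x_{0}=x$ and $x_{i+1}=x_{i}x_{i}^{*}$; since $(\mathrm{rad}\,r(H))^{*}\subseteq\mathrm{rad}\,r(H)$ and $x_{i}$ is a product of $2^{i}$ elements of $\mathrm{rad}\,r(H)$, the assumption $(\mathrm{rad}\,r(H))^{m}=0$ forces $x_{k}=0$ once $2^{k}\geq m$. On the other hand, your own sum-of-squares formula, applied to each $x_{i}$ in turn, gives $\epsilon(x_{i+1})=\sum_{\kk\mid M\otimes M^{*}}\mu_{[M]}^{2}\geq\epsilon(x_{i})^{2}$, where the inequality holds because the term $[M]=[\kk]$ occurs in the sum ($\kk\mid\kk\otimes\kk^{*}$). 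So if the $[\kk]$-coefficient of $x_{1}=xx^{*}$ were nonzero it would stay nonzero for all $x_{i}$, contradicting $x_{k}=0$; hence $\sum_{\kk\mid M\otimes M^{*}}\lambda_{[M]}^{2}=0$ and $x\in\mathcal{P}_{+}$, with the mirror argument on $x^{*}x$ giving $x\in\mathcal{P}_{-}$. This self-reinforcing positivity of the squaring map $x\mapsto xx^{*}$ is the precise elementary substitute for the spectral fact ``positive semidefinite and nilpotent implies zero'' that you were reaching for, and it works at infinite representation type and with $*$ of arbitrary finite order, where your proposed inner-product machinery has no footing.
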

\proof We assume that the nilpotent radical rad$r(H)$ of $r(H)$ satisfying $(\text{rad}r(H))^m=0$ for some $m>0$. For any $x\in \text{rad}r(H)$, let $x_0:=x$ and $x_{i+1}:=x_ix_i^*$ for $i\geq0$.
Note that $(\text{rad}r(H))^*\subseteq\text{rad}r(H)$. There exists some $k$ such that $x_k=0$. We write
\begin{linenomath} $$x=\sum_{\kk\mid M\otimes M^*}\lambda_{[M]}[M]+
\sum_{\kk\nmid M\otimes M^*}\lambda_{[M]}[M]$$
\end{linenomath}  and
\begin{linenomath} $$x_1=xx^*=\sum_{\kk\mid M\otimes M^*}\mu_{[M]}[M]+
\sum_{\kk\nmid M\otimes M^*}\mu_{[M]}[M],$$
\end{linenomath} for $\lambda_{[M]}$ and $\mu_{[M]}$ in $\mathbb{Z}$. As shown in the proof of Lemma \ref{gg19.2}, the coefficient of $[\kk]$ in $x_1=xx^*$ is $\mu_{[\kk]}=\sum_{\kk\mid M\otimes M^*}\lambda_{[M]}^2$ and the coefficient of $[\kk]$ in $x_2=x_1x_1^*$ is $\sum_{\kk\mid M\otimes M^*}\mu_{[M]}^2$. If $\mu_{[\kk]}\neq0$, then $\sum_{\kk\mid M\otimes M^*}\mu_{[M]}^2\neq0$, and hence $x_2\neq0$. Repeating this process, we obtain that $x_i\neq0$ for any $i\geq 0$. This contradicts to the fact that $x_k=0$. In view of this, $\mu_{[\kk]}=0$, and hence $x=\sum_{\kk\nmid M\otimes M^*}\lambda_{[M]}[M]\in\mathcal {P}_+$. Similarly, if $x\in \text{rad}r(H)$, then $x\in\mathcal {P}_-$. We obtain that $\text{rad}r(H)\subseteq \mathcal {P}_{+}\cap \mathcal {P}_{-}$.
\qed

Now we are able to locate central primitive idempotents of $r(H)$.

\begin{prop}\label{gg21}
Let $e$ be a central primitive idempotent of $r(H)$. Then, either $e\in\mathcal {P}_{+}\cap\mathcal {P}_{-}$ or $1-e\in\mathcal {P}_{+}\cap\mathcal {P}_{-}$.
\end{prop}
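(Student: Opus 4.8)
The plan is to exploit the duality operator $*$ together with Lemma \ref{gg19.2} and the coefficient of the identity $[\kk]$ in a self-dual product. First I would record that the anti-automorphism $*$ preserves the centre, sends idempotents to idempotents (since $(e^2)^*=e^*e^*$), and respects orthogonal decompositions; hence $e^*$ is again a central primitive idempotent. Since $e$ and $e^*$ are commuting central idempotents, $e=ee^*+e(1-e^*)$ is an orthogonal decomposition into central idempotents, so primitivity of $e$ forces $ee^*=0$ or $ee^*=e$.

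In the first case $ee^*=0$, and commutativity of central idempotents also gives $e^*e=0$; then Lemma \ref{gg19.2}(1) yields $e\in\mathcal{P}_+$ and Lemma \ref{gg19.2}(2) yields $e\in\mathcal{P}_-$, so $e\in\mathcal{P}_+\cap\mathcal{P}_-$. In the second case $ee^*=e$; applying the same orthogonal-decomposition argument to the primitive idempotent $e^*$ (via $e^*=e+e^*(1-e)$) forces $e=e^*$, i.e. $e$ is self-dual, and likewise $(1-e)^*=1-e$ since $[\kk]^*=[\kk]$.

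Now I would pass to the linear functional $c$ reading off the coefficient of $[\kk]$ in the basis $\text{ind}(H)$. Writing $e=\sum_M\lambda_M[M]$ and $1-e=\sum_M\nu_M[M]$, the computation from the proof of Lemma \ref{gg19.2} shows that the coefficient of $[\kk]$ in the self-dual products $ee^*=e$ and $(1-e)(1-e)^*=1-e$ are the sums of squares $c(e)=\sum_{\kk\mid M\otimes M^*}\lambda_M^2\ge0$ and $c(1-e)=\sum_{\kk\mid M\otimes M^*}\nu_M^2\ge0$. Since $c(e)+c(1-e)=c(1)=1$ and both are nonnegative integers, exactly one of them vanishes. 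If $c(e)=0$, then $\lambda_M=0$ for every $M$ with $\kk\mid M\otimes M^*$, so $e\in\mathcal{P}_+$, and then $e=e^*\in\mathcal{P}_+^*=\mathcal{P}_-$, giving $e\in\mathcal{P}_+\cap\mathcal{P}_-$; symmetrically, if $c(1-e)=0$ then $1-e\in\mathcal{P}_+\cap\mathcal{P}_-$.

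The one genuinely delicate point is the second case: one must verify that $ee^*=e$ really forces $e=e^*$, since the self-duality is what makes both the coefficient-of-$[\kk]$ computation and the transfer $\mathcal{P}_+\to\mathcal{P}_-$ available, and this is precisely where primitivity of $e^*$—not merely of $e$—is needed. Everything else is bookkeeping with the nonnegativity of the sums of squares furnished by Lemma \ref{gg19.2} and the normalization $c(1)=1$; note in particular that no finite representation type hypothesis enters.
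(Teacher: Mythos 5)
Your proof is correct and takes essentially the same route as the paper's: the dichotomy $ee^*=0$ or $e=e^*$ obtained from the duality anti-automorphism, Lemma \ref{gg19.2} in the orthogonal case, and the sum-of-squares computation of the coefficient of $[\kk]$ (the mechanism inside the proof of Lemma \ref{gg19.2}) in the self-dual case. The only cosmetic differences are that you spell out the orthogonality dichotomy via $e=ee^*+e(1-e^*)$, which the paper merely asserts, and in the self-dual case you treat $\mathcal{P}_{-}$ by transferring along $e=e^*$ and $\mathcal{P}_{+}^*=\mathcal{P}_{-}$ with the normalization $c(e)+c(1-e)=1$, whereas the paper repeats the coefficient comparison for $e^*e=e$ in the partition by $\kk\mid M^*\otimes M$.
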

\proof If $e$ is a central primitive idempotent of $r(H)$, so is $e^*$ since the duality operator $*$ is an anti-automorphism of $r(H)$. It follows that $e=e^*$ or $ee^*=e^*e=0$. If $ee^*=e^*e=0$, by Lemma \ref{gg19.2}, $e\in\mathcal {P}_+$ and $e\in\mathcal {P}_-$ as well. Now suppose  $e=e^*$, and let
\begin{linenomath} $$e=\sum_{\kk\mid M\otimes M^*}\lambda_{[M]}[M]+
\sum_{\kk\nmid M\otimes M^*}\lambda_{[M]}[M].$$
\end{linenomath}
Comparing the coefficients of $[\kk]$ in both sides of the equation $ee^*=e$, we obtain that $\sum_{\kk\mid M\otimes M^*}\lambda_{[M]}^2=\lambda_{[\kk]}$. This implies that $\lambda_{[\kk]}=0$ or 1, and $\lambda_{[M]}=0$ for any $[M]\neq[\kk]$ satisfying $\kk\mid M\otimes M^*$. Hence $e$ has the following reduced form
\begin{linenomath} $$e=\lambda_{[\kk]}[\kk]+
\sum_{\kk\nmid M\otimes M^*}\lambda_{[M]}[M].$$
In the meanwhile, if we  write
\begin{linenomath} $$e=\sum_{\kk\mid M^*\otimes M}\mu_{[M]}[M]+
\sum_{\kk\nmid M^*\otimes M}\mu_{[M]}[M].$$\end{linenomath}
\end{linenomath}
Then  the equation  $e^*e=e$ yields that
\begin{linenomath} $$e=\mu_{[\kk]}[\kk]+
\sum_{\kk\nmid M^*\otimes M}\mu_{[M]}[M].$$
\end{linenomath}  Thus, $\mu_{[\kk]}=\lambda_{[\kk]}$ which is equal to 0 or 1. We conclude that $e\in\mathcal {P}_{+}\cap\mathcal {P}_{-}$ if $\mu_{[\kk]}=\lambda_{[\kk]}=0$, and $1-e\in\mathcal {P}_{+}\cap\mathcal {P}_{-}$ if $\mu_{[\kk]}=\lambda_{[\kk]}=1$.
\qed

\section{\bf Bi-Frobenius algebra structures on stable Green rings}
In this section, we shall show that the bilinear form $(-,-)$ on the Green ring $r(H)$ could induce a form on the stable Green ring of $H$. The induced form on the stable Green ring is associative, but degenerate in general. We give some equivalent conditions for the non-degeneracy of the form. If the form is non-degenerate, the complexified stable Green algebra is a group-like algebra, and hence a bi-Frobenius algebra.

Recall that the \textit{stable category} $H$-\underline{mod} has the same objects as $H$-mod does, and the space of morphisms from $X$ to $Y$ in $H$-\underline{mod} is the quotient space
\begin{linenomath} $$\text{\underline{Hom}}_H(X,Y):=\text{Hom}_H(X,Y)/\mathcal {P}(X,Y),$$\end{linenomath}
where $\mathcal {P}(X,Y)$ is the subspace of $\text{Hom}_H(X,Y)$ consisting of morphisms factoring through projective modules.
The stable category $H$-\underline{mod} is a triangulated \cite{Hap} monoidal category with the monoidal structure stemming from that of $H$-mod.

\begin{prop}\label{prop20166}
The stable category $H$-\underline{mod} is semisimple if and only if any indecomposable $H$-module is either simple or projective.
\end{prop}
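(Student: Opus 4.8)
The plan is to read semisimplicity of the triangulated category $H$-\underline{mod} as the vanishing of its radical: every morphism between indecomposable objects which is not an isomorphism is zero, equivalently every non-isomorphism between non-projective indecomposable modules factors through a projective. Throughout I will use that $H$, being a finite dimensional Hopf algebra, is self-injective, so that projective and injective modules coincide and every indecomposable projective module has a simple and essential socle; I will also use the theory of almost split sequences of \cite{ARS}.

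For the implication ($\Leftarrow$), suppose every indecomposable $H$-module is simple or projective. Then the indecomposable objects of $H$-\underline{mod} are precisely the non-projective simple modules. For two such simples $S$ and $T$ I would compute $\underline{\text{Hom}}_H(S,T)=\text{Hom}_H(S,T)/\mathcal {P}(S,T)$: Schur's lemma gives $\text{Hom}_H(S,T)=\kk\,\delta_{[S],[T]}$, while $\mathcal {P}(S,T)=0$ because a nonzero map factoring as $S\to P\to T$ through a projective $P$ would be an isomorphism (both $S$ and $T$ are simple), forcing the first arrow to be a split monomorphism and hence $S$ to be a summand of $P$, contradicting that $S$ is non-projective. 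Thus $\underline{\text{Hom}}_H(S,T)=\kk\,\delta_{[S],[T]}$ and $H$-\underline{mod} is semisimple.

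For the main implication ($\Rightarrow$), suppose $H$-\underline{mod} is semisimple and let $X$ be indecomposable and non-projective; I must show $X$ is simple. First I show that the middle term of the almost split sequence $0\to\tau X\to E\xrightarrow{\pi}X\to 0$ is projective. If some indecomposable summand $E_i$ of $E$ were non-projective, the component $E_i\to X$ would be irreducible, hence a non-isomorphism that does not factor through any projective: a factorization $E_i\to P\to X$ through a projective would, by irreducibility, make the first map a split monomorphism or the second a split epimorphism, forcing $E_i$ or $X$ to be projective. This would produce a nonzero element of $\underline{\text{Hom}}_H(E_i,X)$ that is not an isomorphism, contradicting semisimplicity. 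Hence $E$ is projective. A standard reduction then gives $E\cong P_X$ and $\tau X\cong\Omega X$: since $\pi$ is surjective ($X$ being non-projective) and $E$ is projective, the projective cover $P_X\to X$ splits off $\pi$ as a direct summand, and indecomposability of $\tau X=\ker\pi$ together with $\Omega X\neq 0$ having no projective summand forces the complementary summand to vanish.

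It remains to derive simplicity of $X$ from the almost split sequence $0\to\Omega X\to P_X\xrightarrow{\pi}X\to 0$. Assume $X$ is not simple; being indecomposable it is then not semisimple, so $\operatorname{soc}X\subsetneq X$ and there is a simple submodule $S\subseteq\operatorname{soc}X$ with $S\subsetneq X$. The inclusion $j\colon S\hookrightarrow X$ is not a split epimorphism, so by right almost splitness it factors as $j=\pi g$ for some $g\colon S\to P_X$. As $j$ is injective so is $g$, whence $\operatorname{im}g\cong S$ is simple and lies in $\operatorname{soc}P_X$. Since $H$ is self-injective, $P_X$ is indecomposable injective, so $\operatorname{soc}P_X$ is simple and essential; thus $\operatorname{im}g=\operatorname{soc}P_X$, and essentiality forces $\operatorname{soc}P_X\subseteq\Omega X=\ker\pi$ because $\Omega X\neq 0$. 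Then $j=\pi g$ vanishes on $\operatorname{im}g=\operatorname{soc}P_X\subseteq\ker\pi$, contradicting $j\neq 0$. Hence $X$ is simple. I expect the technical heart to be the reduction isolating a right almost split epimorphism from a projective (the identification $E\cong P_X$) together with the socle--essentiality argument; the delicate point is to guarantee that the lifted map $g$ lands inside, and indeed exhausts, the simple socle of $P_X$, which then necessarily meets $\ker\pi$.
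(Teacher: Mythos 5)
Your proof is correct, but it takes a genuinely different route from the paper's, especially in the forward direction. For ($\Leftarrow$) the paper simply invokes the method of \cite[Theorem 2.7]{AAI}, whereas you verify semisimplicity directly via the Schur-lemma computation $\underline{\text{Hom}}_H(S,T)=\kk\,\delta_{[S],[T]}$ for non-projective simples; both are fine. For ($\Rightarrow$) the paper argues contrapositively using its bilinear-form machinery: if $M$ is indecomposable and neither simple nor projective, the dual of the socle inclusion gives a surjection $M^*\rightarrow(\text{Soc}M)^*$, and Proposition \ref{prop2016} (2) (which rests on Corollary \ref{cor2015}, i.e.\ $\langle[X],[V]\rangle_2=\delta_{[X],[P_V]}$) yields $\dim_{\kk}\underline{\text{Hom}}_H(M^*,(\text{Soc}M)^*)=\langle M^*,(\text{Soc}M)^*\rangle_3=\langle M^*,(\text{Soc}M)^*\rangle_1\neq0$, a nonzero stable map from a non-simple indecomposable to a semisimple module. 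You instead run a self-contained Auslander--Reiten argument: semisimplicity of $H$-\underline{mod} kills irreducible maps between non-projective indecomposables (such maps never factor through projectives, as you correctly check), forcing the middle term of the almost split sequence ending at $X$ to be projective; your reduction $E\cong P_X$, $\tau X\cong\Omega X$ is the standard one and is carried out correctly; and the simple essential socle of $P_X$ (self-injectivity of the Hopf algebra) then traps the lift of any simple submodule inclusion inside $\ker\pi$, forcing $X$ simple. Amusingly, both proofs pivot on the same morphism, the socle inclusion $\text{Soc}X\hookrightarrow X$: the paper dualizes it to a surjection, you lift it through $\pi$. What each buys: the paper's proof is a two-line consequence of the Section 3 form calculus and needs no AR theory; yours avoids the forms entirely, works from first principles, and yields extra structural information en route (under semisimplicity of $H$-\underline{mod}, the AR sequence ending at any non-projective indecomposable is $0\rightarrow\Omega X\rightarrow P_X\rightarrow X\rightarrow0$). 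One small point you gloss over: to conclude that the stable class of an irreducible map is a \emph{non-isomorphism} in $H$-\underline{mod} even when $E_i\cong X$, you should observe that $\mathcal{P}(X,X)\subseteq\text{rad}\,\text{End}_H(X)$ for $X$ indecomposable non-projective, so a stably invertible map between such modules lifts to a genuine isomorphism; this is standard and does not affect the validity of your argument.
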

\proof If any indecomposable $H$-module is either simple or projective, using the same method as \cite[Theorem 2.7]{AAI}, one is able to prove that $H$-\underline{mod} is semisimple. Conversely, if $M$ is an indecomposable $H$-module which is neither simple nor projective, then the inclusion map $\text{Soc}M\rightarrow M$ induces the surjective map $M^*\rightarrow (\text{Soc}M)^*$. It follows from Proposition \ref{prop2016} (2) that $\dim_{\kk}\text{\underline{Hom}}_H(M^*,(\text{Soc}M)^*)=\langle M^*,(\text{Soc}M)^*\rangle_3=\langle M^*,(\text{Soc}M)^*\rangle_1\neq0$. This shows that $H$-\underline{mod} is not semisimple.
\qed

The Green ring of the stable category $H$-\underline{mod} is called the \textit{stable Green
ring} of $H$, denoted $r_{st}(H)$. Obviously, the stable Green ring $r_{st}(H)$ admits a $\mathbb{Z}$-basis consisting of all isomorphism classes of indecomposable non-projective $H$-modules.
As the stable category $H$-\underline{mod} is a quotient category of $H$-mod, the stale Green ring $r_{st}(H)$ can be regarded as the quotient ring of the Green ring $r(H)$.

\begin{prop}\label{g100}
The stable Green ring $r_{st}(H)$ is isomorphic to the quotient ring $r(H)/\mathcal{P}$.
\end{prop}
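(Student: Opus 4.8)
The plan is to realize the claimed isomorphism as the first-isomorphism-theorem incarnation of the canonical ring epimorphism $\pi\colon r(H)\rightarrow r_{st}(H)$ attached to the quotient functor $H\text{-mod}\rightarrow H\text{-}\underline{\text{mod}}$, and then to identify $\ker\pi$ with $\mathcal{P}$. First I would observe that this functor is the identity on objects and preserves both the biproduct and the tensor product, since the additive and monoidal structures on $H\text{-}\underline{\text{mod}}$ are inherited from $H\text{-mod}$. Hence assigning to each $H$-module its own stable class gives a well-defined assignment on isomorphism classes that respects the defining relations $[X\oplus Y]=[X]+[Y]$ of $r(H)$ and satisfies $\pi([X][Y])=\pi([X\otimes Y])=\pi([X])\pi([Y])$, while sending $[\kk]$ to the identity of $r_{st}(H)$. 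Thus $\pi\colon r(H)\rightarrow r_{st}(H)$ is a ring homomorphism. Because the indecomposable non-projective modules form a $\mathbb{Z}$-basis of $r_{st}(H)$ and all lie in the image, $\pi$ is surjective.

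Next I would establish $\mathcal{P}\subseteq\ker\pi$. Every projective module is a zero object of $H\text{-}\underline{\text{mod}}$, so $\pi([P])=0$ for each projective $P$; as $\pi$ is a ring homomorphism, the whole ideal $\mathcal{P}$ is annihilated. For the reverse inclusion, which is the crux, the key input is the Hopf-algebraic fact that $P\otimes M$ and $M\otimes P$ are projective whenever $P$ is projective; consequently the two-sided ideal $\mathcal{P}$ coincides with the $\mathbb{Z}$-span of the classes of indecomposable projective modules. Granting this, I would write an arbitrary $x\in\ker\pi$ in the standard basis $\text{ind}(H)$ and split it as $x=\sum_{P}\lambda_{[P]}[P]+\sum_{N}\lambda_{[N]}[N]$, the first sum over indecomposable projectives and the second over indecomposable non-projectives. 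Applying $\pi$ kills the projective part, so $0=\pi(x)=\sum_{N}\lambda_{[N]}\underline{[N]}$; since the $\underline{[N]}$ form a basis of $r_{st}(H)$, all non-projective coefficients vanish, whence $x=\sum_{P}\lambda_{[P]}[P]\in\mathcal{P}$. The first isomorphism theorem then yields $r(H)/\mathcal{P}\cong r_{st}(H)$.

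The main obstacle I anticipate is the bookkeeping point that $\mathcal{P}$ is exactly the $\mathbb{Z}$-span of the indecomposable projectives rather than some strictly larger ideal; this is precisely what makes the two decompositions of $x$ compatible, and it rests entirely on the projectivity of $P\otimes M$ and $M\otimes P$ over a Hopf algebra. Everything else is a routine verification that $\pi$ is a well-defined ring map compatible with the additive and monoidal structures of the two Green rings.
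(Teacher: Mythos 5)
Your proposal is correct and takes essentially the same route as the paper: the canonical functor $H\text{-mod}\rightarrow H\text{-}\underline{\text{mod}}$ induces a ring epimorphism $r(H)\rightarrow r_{st}(H)$ annihilating $\mathcal{P}$, and the one-to-one correspondence between indecomposables of the stable category and non-projective indecomposables of $H\text{-mod}$ (equivalently, your basis computation) identifies the kernel with $\mathcal{P}$. One small remark: the fact you flag as the crux---that $\mathcal{P}$ equals the $\mathbb{Z}$-span of the projective classes, via projectivity of $P\otimes M$ and $M\otimes P$---is not actually needed, since your argument only uses the trivial inclusion of that span into $\mathcal{P}$ (to conclude $\ker\pi\subseteq\mathcal{P}$ after the non-projective coefficients vanish) together with the fact that $\ker\pi$ is a two-sided ideal containing the generators of $\mathcal{P}$ (for the reverse inclusion).
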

\proof The canonical functor $F$ from $H$-mod to $H$-\underline{mod} given by $F(M)=M$ and $F(\phi)=\underline{\phi}$, for $\phi\in\textrm{Hom}_{H}(M,N)$ with the canonical image $\underline{\phi}\in\textrm{\underline{Hom}}(M,N)$, is a full dense tensor functor. Such a functor induces a ring epimorphism $f$ from $r(H)$ to $r_{st}(H)$ such that $f(\mathcal{P})=0$. Hence there is a unique ring epimorphism $\overline{f}$ from $r(H)/\mathcal{P}$ to $r_{st}(H)$ such that $\overline{f}(\overline{x})=f(x)$, for any $x\in r(H)$ with the canonical image $\overline{x}\in r(H)/\mathcal{P}$. We conclude that $r_{st}(H)$ is isomorphic to $r(H)/\mathcal{P}$ since there is a one to one correspondence between the indecomposable objects in $H$-\underline{mod} and the non-projective indecomposable objects in $H$-mod. \qed


We identify $r(H)/\mathcal{P}$ with $r_{st}(H)$ and denote $\overline{x}$ the element in $r_{st}(H)$ for any $x\in r(H)$.
Since  $(\delta^*_{[\kk]},x)=0$ for any $x\in\mathcal {P}$,  the linear functional $(\delta^*_{[\kk]},-)$ on $r(H)$ induces a linear functional on $r_{st}(H)$. Using this functional, we define a form on $r_{st}(H)$ as follows:
\begin{equation}\label{eq5.1}
[\overline{x},\overline{y}]_{st}:=(\delta^*_{[\kk]},xy),\ \text{for}\ x,y\in r(H).
\end{equation}
 It is obvious that the form $[-,-]_{st}$ is associative and $*$-symmetric: $[\overline{x},\overline{y}]_{st}=[\overline{y}^*,\overline{x}^*]_{st}$.

The \emph{left radical} of the form $[-,-]_{st}$ is the subgroup of $r_{st}(H)$ consisting of $\overline{x}\in r_{st}(H)$ such that $[\overline{x},\overline{y}]_{st}=0$ for any $\overline{y}\in r_{st}(H)$. The \emph{right radical} of the form $[-,-]_{st}$ is defined similarly. The form $[-,-]_{st}$ is non-degenerate if and only if the left radical (or equivalently, the right radical) of the form $[-,-]_{st}$ is zero.

\begin{prop}\label{propgenp} The left radical of the form $[-,-]_{st}$ is equal to $\mathcal {P}_{+}/\mathcal {P}$
and the right radical of the form $[-,-]_{st}$ is equal to $\mathcal {P}_{-}/\mathcal {P}$.
\end{prop}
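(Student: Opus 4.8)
The plan is to characterize the left radical directly from the definition of the form. By \eqref{eq5.1}, an element $\overline{x}\in r_{st}(H)$ lies in the left radical precisely when $(\delta^*_{[\kk]},xy)=0$ for all $y\in r(H)$. First I would rewrite this using the associativity of $(-,-)$ from Lemma \ref{gg16} (1): we have $(\delta^*_{[\kk]},xy)=(\delta^*_{[\kk]}x,y)$, so the condition that $\overline{x}$ lies in the left radical for all $\overline{y}$ is equivalent, by the non-degeneracy of $(-,-)$ (Lemma \ref{gg17} (3)), to $\delta^*_{[\kk]}x=0$ modulo the requirement that $y$ ranges over a set of representatives; more carefully, since $y$ ranges over all of $r(H)$ and $\mathcal{P}$ is precisely the elements killed when passing to $r_{st}(H)$, the vanishing of $(\delta^*_{[\kk]}x,y)$ for all $y\in r(H)$ forces $\delta^*_{[\kk]}x=0$ in $r(H)$. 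Thus the left radical of $[-,-]_{st}$ is the image in $r_{st}(H)$ of the left annihilator $\{x\in r(H)\mid \delta^*_{[\kk]}x=0\}$.

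Next I would identify this annihilator. The element $\delta^*_{[\kk]}=\delta_{[\kk]}^*$ is the dual of $\delta_{[\kk]}$, which generates the ideals $\mathcal{J}_+$ and $\mathcal{J}_-$. The key is to recognize that $\delta^*_{[\kk]}x=0$ can be detected one indecomposable at a time and is governed exactly by Theorem \ref{gg19}. Indeed, for an indecomposable module $X$, Theorem \ref{gg19} (1) states that $[X]\delta_{[\kk]}=0$ if and only if $\kk\nmid X^*\otimes X$; dualizing via the anti-automorphism $*$ gives $\delta^*_{[\kk]}[X]^*=0$ if and only if $\kk\nmid X^*\otimes X$. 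Replacing $X$ by a suitable module whose dual is $X$ and matching this against the definition of $\mathcal{P}_+=\mathbb{Z}\{[M]\mid\kk\nmid M\otimes M^*\}$, I expect the left annihilator of $\delta^*_{[\kk]}$ to coincide with $\mathcal{P}_+$. The cleanest route is probably to use the $*$-symmetry $[\overline{x},\overline{y}]_{st}=[\overline{y}^*,\overline{x}^*]_{st}$ together with Theorem \ref{gg19} (2), which says $\delta_{[\kk]}[X]=0$ iff $\kk\nmid X\otimes X^*$, so that $\mathcal{P}_+$ is exactly spanned by the indecomposables annihilated by $\delta_{[\kk]}$ on the appropriate side.

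Concretely, I would argue the two inclusions. For $\mathcal{P}_+/\mathcal{P}\subseteq$ left radical: if $[M]\in\mathcal{P}_+$, i.e., $\kk\nmid M\otimes M^*$, then by Theorem \ref{gg19} (2) we have $\delta_{[\kk]}[M]=0$, and then for any $y$, $[\overline{M},\overline{y}]_{st}=(\delta^*_{[\kk]},My)$; I would convert this to $(\delta^*_{[\kk]}[M]^{**},y)$ or a similar expression using associativity and the relation between $*$ and $**$ from Lemma \ref{gg16} (2), reducing it to a product that vanishes. For the reverse inclusion, given $\overline{x}$ in the left radical with $x=\sum\lambda_{[M]}[M]$ summed over non-projective indecomposables, I would use the dual-basis expansion from Remark \ref{rem2015} (1) together with the established vanishing criterion to show every coefficient on an $M$ with $\kk\mid M\otimes M^*$ must be zero, hence $x\in\mathcal{P}_+$ modulo $\mathcal{P}$. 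The analogous computation with the other side, using $[\overline{x},\overline{y}]_{st}=[\overline{y}^*,\overline{x}^*]_{st}$ and Theorem \ref{gg19} (1), yields the right radical as $\mathcal{P}_-/\mathcal{P}$.

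The main obstacle I anticipate is the bookkeeping around the double dual: the form $(-,-)$ is only $**$-symmetric rather than symmetric (Lemma \ref{gg16} (2)), so the two sides $\mathcal{P}_+$ and $\mathcal{P}_-$ genuinely differ when $S^2$ is not inner, and one must track carefully which of $\kk\mid M\otimes M^*$ versus $\kk\mid M^*\otimes M$ governs the left versus the right radical. Keeping the evaluation $(\delta^*_{[\kk]},xy)=(\delta^*_{[\kk]}x,y)$ oriented correctly, and applying the correct part of Theorem \ref{gg19} to each side, is where the argument can easily slip; the $*$-symmetry of $[-,-]_{st}$ is the tool that makes the right-radical computation fall out of the left-radical one without repeating the whole analysis.
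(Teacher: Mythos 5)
Your overall strategy matches the paper's: reduce membership in the left radical, via the associativity and non-degeneracy of $(-,-)$, to the annihilator condition $\delta^*_{[\kk]}x=0$, and then decide that condition one indecomposable at a time using Theorem \ref{gg19} and the inverse $\star$ of the duality operator. The paper does exactly this in compressed form: it tests $\overline{x}$ against the elements $\overline{[M]^\star}$ and computes $(\delta^*_{[\kk]},x[M]^\star)=((\delta_{[\kk]}[M])^*,x)$, which is $0$ or $\lambda_{[M]}$ according to Theorem \ref{gg19} (2),(4) and Lemma \ref{gg17} (1). Two points of comparison. For the forward inclusion the paper is slicker: it uses that $\mathcal {P}_{+}$ is a right ideal (Proposition \ref{gg13}) together with the observation that $(\delta^*_{[\kk]},-)$ reads off the coefficient of $[\kk]$, and $[\kk]$ never lies in the spanning set of $\mathcal {P}_{+}$. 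Your module-by-module verification also works, but beware the orientation: dualizing Theorem \ref{gg19} (2), i.e. $\delta_{[\kk]}[M]=0$, yields $[M^*]\delta^*_{[\kk]}=0$ (right multiplication), not the needed $\delta^*_{[\kk]}[M]=0$; the correct move is to apply Theorem \ref{gg19} (1) to the module $N=M^\star$ with $N^*\cong M$ and then dualize, using $\kk\mid M\otimes M^\star\Leftrightarrow\kk\mid (M\otimes M^\star)^*= M\otimes M^*$. Your remark about ``replacing $X$ by a suitable module whose dual is $X$'' is precisely this fix, so the idea is present even though the citation as written is mis-oriented.

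The one genuine gap is in your reverse inclusion: you invoke the dual-basis expansion of Remark \ref{rem2015} (1), which is available only when $H$ has finite representation type, whereas Proposition \ref{propgenp} is stated, and proved in the paper, for an arbitrary finite-dimensional $H$ --- the standing finite-type hypothesis of Section 5 is imposed only \emph{after} this proposition, and the subsequent Proposition \ref{theogell} uses it in that generality. The repair is cheap and is what the paper does: pair $\overline{x}$ with $\overline{[M]^\star}$ for each single indecomposable $M$ and extract the coefficient by Lemma \ref{gg17} (1), $(\delta^*_{[M]},x)=\lambda_{[M]}$, which requires no basis of $\delta$'s; alternatively, note that $\delta^*_{[\kk]}x=\sum_{\kk\mid M\otimes M^*}\lambda_{[M]}\delta^*_{[M^\star]}$ and use only the linear independence of $\{\delta_{[M]}\}$ (Lemma \ref{gg15} (3)), which also holds without finite type. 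With that substitution your argument goes through and coincides with the paper's.
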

\proof We only consider the left radical of the form $[-,-]_{st}$. For $x,y\in r(H)$, if $x\in\mathcal {P}_{+}$, then $xy\in\mathcal {P}_{+}$ since $\mathcal {P}_{+}$ is a right ideal of $r(H)$. It follows that $[\overline{x},\overline{y}]_{st}=(\delta^*_{[\kk]},xy)=0$, and hence $\overline{x}$ belongs to the left radical of the form $[-,-]_{st}$.
Conversely, we suppose that $\overline{x}$ belongs to the left radical of the form $[-,-]_{st}$ for $x=\sum_{[M]\in\text{ind}(H)}\lambda_{[M]}[M]$. The inverse of $\ast$ under the composition is denoted  $\star$. For any $[M]\in\text{ind}(H)$, by Theorem \ref{gg19}, we have
\begin{linenomath} $$0=[\overline{x},\overline{[M]^\star}]_{st}=(\delta^*_{[\kk]},x[M]^\star)=([M]^{\star**}\delta^*_{[\kk]},x)=((\delta_{[\kk]}[M])^*,x)
=
\begin{cases}
0, & \kk\nmid M\otimes M^*,\\
\lambda_{[M]}, & \kk\mid M\otimes M^*.
\end{cases}
$$
\end{linenomath}
This implies that $x=\sum_{\kk\nmid M\otimes M^*}\lambda_{[M]}[M]\in\mathcal {P}_{+}$.
\qed

Now let $\mathcal {J}$ be the subgroup of $r(H)$ as follows:
\begin{linenomath} $$\mathcal {J}=\mathbb{Z}\{\delta_{[M]}\mid [M]\in\text{ind}(H)\ \text{and}\ M\ \text{not\ projective}\}.$$\end{linenomath}  Then $\mathcal {J}_{+}$ and $\mathcal {J}_{-}$ are both contained in $\mathcal {J}$. If $H$ is of finite representation type, then $\mathcal {J}$ is nothing but $\ker\varphi$ ($=\mathcal {P}^{\perp})$ by Lemma \ref{gg15} (5). We are now ready to characterize the non-degeneracy of the form $[-,-]_{st}$ using Proposition \ref{propgenp}.

\begin{prop}\label{theogell} The following statements are equivalent:
\begin{enumerate}
  \item The form $[-,-]_{st}$ is non-degenerate.
  \item $\mathcal {P}_{+}=\mathcal {P}_{-}=\mathcal {P}$.
  \item $\mathcal {J}_{+}=\mathcal {J}_{-}=\mathcal {J}$.
  \item $\mathcal {J}$ is an ideal of $r(H)$ generated by the central element $\delta_{[\kk]}$, the left annihilator $l(\mathcal {J})$ and right annihilator $r(\mathcal {J})$ of $\mathcal {J}$ are both equal to $\mathcal {P}$.
\end{enumerate}
\end{prop}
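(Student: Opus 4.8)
The plan is to establish the three equivalences $(1)\Leftrightarrow(2)$, $(2)\Leftrightarrow(3)$ and $(3)\Leftrightarrow(4)$ separately; the whole argument rests on Proposition \ref{propgenp}, on Theorem \ref{gg19}, and on the linear independence of the family $\{\delta_{[M]}\}$ from Lemma \ref{gg15} (3). Throughout I may assume $H$ is non-semisimple, since otherwise $r_{st}(H)=0$ and every assertion is vacuous. The equivalence $(1)\Leftrightarrow(2)$ is then essentially a restatement of Proposition \ref{propgenp}: the left and right radicals of $[-,-]_{st}$ are $\mathcal {P}_{+}/\mathcal {P}$ and $\mathcal {P}_{-}/\mathcal {P}$, and the form is non-degenerate exactly when one (equivalently either) radical vanishes. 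Hence non-degeneracy is the same as $\mathcal {P}_{+}=\mathcal {P}$ together with $\mathcal {P}_{-}=\mathcal {P}$, which is (2).

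For $(2)\Leftrightarrow(3)$ I would isolate the single condition (C): \emph{every non-projective indecomposable $M$ satisfies both $\kk\mid M\otimes M^{*}$ and $\kk\mid M^{*}\otimes M$}, and show that both (2) and (3) are equivalent to (C). First I would record that $\mathcal {P}\subseteq\mathcal {P}_{+}\cap\mathcal {P}_{-}$: an indecomposable projective $Q$ over a non-semisimple $H$ satisfies $\kk\nmid Q\otimes Q^{*}$ and $\kk\nmid Q^{*}\otimes Q$, so the basis of $\mathcal {P}$ (the indecomposable projectives) sits inside the defining bases of $\mathcal {P}_{+}$ and $\mathcal {P}_{-}$. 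Since $\mathcal {P}_{+}$ is the $\mathbb{Z}$-span of $\{[M]\mid\kk\nmid M\otimes M^{*}\}$ while $\mathcal {P}$ is that of the indecomposable projectives, $\mathcal {P}_{+}=\mathcal {P}$ holds iff every indecomposable with $\kk\nmid M\otimes M^{*}$ is projective; dually for $\mathcal {P}_{-}$. Using the linear independence of the $\delta_{[M]}$ together with $\mathcal {J}_{\pm}\subseteq\mathcal {J}$, the same bookkeeping gives $\mathcal {J}_{+}=\mathcal {J}$ iff every non-projective indecomposable has $\kk\mid M\otimes M^{*}$, and $\mathcal {J}_{-}=\mathcal {J}$ iff every non-projective indecomposable has $\kk\mid M^{*}\otimes M$. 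Both (2) and (3) then collapse to (C).

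For $(3)\Leftrightarrow(4)$, assume (3), equivalently (C). By Theorem \ref{gg19} (3),(4), condition (C) gives $\delta_{[\kk]}[M]=\delta_{[M]}=[M]\delta_{[\kk]}$ for non-projective $M$ and $\delta_{[\kk]}[M]=0=[M]\delta_{[\kk]}$ for projective $M$, so $\delta_{[\kk]}$ commutes with every basis element and is therefore central, and $\mathcal {J}=\mathcal {J}_{+}=\delta_{[\kk]}r(H)=r(H)\delta_{[\kk]}=\mathcal {J}_{-}$ is the ideal it generates. For the annihilators I would expand $x=\sum_{[M]}\lambda_{[M]}[M]$ and use Theorem \ref{gg19} with the independence of the $\delta_{[M]}$ to get $\delta_{[\kk]}x=0\Leftrightarrow x\in\mathcal {P}_{+}$ and $x\delta_{[\kk]}=0\Leftrightarrow x\in\mathcal {P}_{-}$; since $\mathcal {J}=r(H)\delta_{[\kk]}=\delta_{[\kk]}r(H)$, this yields $r(\mathcal {J})=\mathcal {P}_{+}=\mathcal {P}$ and $l(\mathcal {J})=\mathcal {P}_{-}=\mathcal {P}$ by (2), which is (4). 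Conversely, (4) already asserts that $\mathcal {J}$ is generated by the central element $\delta_{[\kk]}$, whence $\mathcal {J}=\delta_{[\kk]}r(H)=\mathcal {J}_{+}$ and $\mathcal {J}=r(H)\delta_{[\kk]}=\mathcal {J}_{-}$ directly from the description of $\mathcal {J}_{\pm}$ as the one-sided ideals generated by $\delta_{[\kk]}$, giving (3).

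The main obstacle will be the bookkeeping in $(2)\Leftrightarrow(3)$: one must be careful that the relevant spanning sets are genuine bases, so that equality of subgroups reduces to equality of index sets, and that the inclusion $\mathcal {P}\subseteq\mathcal {P}_{+}\cap\mathcal {P}_{-}$ is secured before translating each equality into (C) by means of Theorem \ref{gg19}. Once $\delta_{[\kk]}$ has been shown to be central under (C), the remaining computations with the annihilators are purely formal.
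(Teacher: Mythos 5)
Your proof is correct. For the equivalences $(1)\Leftrightarrow(2)$ and $(2)\Leftrightarrow(3)$ it coincides with the paper's argument: the first is read off from Proposition \ref{propgenp}, and the second is the same translation of both conditions into ``$\kk\mid M\otimes M^*$ and $\kk\mid M^*\otimes M$ for every non-projective indecomposable $M$'' (the paper states this more tersely, without your explicit appeal to the linear independence in Lemma \ref{gg15}~(3), but the content is identical). Where you genuinely diverge is clause (4). The paper proves $(1)\Rightarrow(4)$: after getting centrality of $\delta_{[\kk]}$ and $\mathcal{J}=(\delta_{[\kk]})$ from Theorem \ref{gg19}, it identifies the common annihilator $I:=l(\mathcal{J})=r(\mathcal{J})$ with $\mathcal{P}$ by a non-degeneracy argument: $\mathcal{P}\subseteq I\subseteq T$ where $T_{st}=\{\overline{x}\mid[\overline{x},1]_{st}=0\}$, and $T_{st}$ contains no nonzero ideal of $r_{st}(H)$ precisely because the form is non-degenerate; the converse $(4)\Rightarrow(1)$ is again run through the form. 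You instead prove $(3)\Leftrightarrow(4)$ by direct computation: expanding $x$ in the basis $\mathrm{ind}(H)$, Theorem \ref{gg19} together with the independence of the $\delta_{[M]}$ yields the \emph{unconditional} identities $\{x\mid\delta_{[\kk]}x=0\}=\mathcal{P}_{+}$ and $\{x\mid x\delta_{[\kk]}=0\}=\mathcal{P}_{-}$, whence $r(\mathcal{J})=\mathcal{P}_{+}=\mathcal{P}$ and $l(\mathcal{J})=\mathcal{P}_{-}=\mathcal{P}$ follow from (2); and your $(4)\Rightarrow(3)$ uses only the generation clause of (4), via $\delta_{[\kk]}r(H)=\mathcal{J}_{+}$ and $r(H)\delta_{[\kk]}=\mathcal{J}_{-}$. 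Your route is more elementary, makes the annihilators explicit, and incidentally exposes that the annihilator clause of (4) is redundant for recovering (1)--(3), whereas the paper's route is shorter by letting non-degeneracy do the work. One small inaccuracy: in the semisimple case the statements are not ``vacuous'' --- (1) holds trivially for the zero ring while (2)--(4) fail (e.g. $\mathcal{P}_{+}=0\neq\mathcal{P}=r(H)$) --- so excluding semisimple $H$, which the paper also does implicitly, is genuinely necessary for the proposition and for your inclusion $\mathcal{P}\subseteq\mathcal{P}_{+}\cap\mathcal{P}_{-}$; your standing assumption is the right one, just not for the stated reason.
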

\proof It can be seen from Proposition \ref{propgenp} that Part (1) and Part (2) are equivalent. The equality $\mathcal {P}_{+}=\mathcal {P}$ is equivalent to saying that $\kk\nmid M\otimes M^*$ if and only if $M$ is projective, or equivalently, $\kk\mid M\otimes M^*$ if and only if $M$ is not projective, this is precisely $\mathcal {J}_{+}=\mathcal {J}$. Similarly, $\mathcal {P}_{-}=\mathcal {P}$ if and only if $\mathcal {J}_{-}=\mathcal {J}$.

$(1)\Rightarrow(4)$ If the form $[-,-]_{st}$ is non-degenerate, then $\mathcal {J}_{+}=\mathcal {J}_{-}=\mathcal {J}$. It follows from Theorem \ref{gg19} that $\delta_{[\kk]}$ is a central element of $r(H)$ and $\mathcal {J}$ is an ideal of $r(H)$ generated by $\delta_{[\kk]}$. Observe that $\mathcal {J}_{+}=\mathcal {J}_{-}=\mathcal {J}$ implying that $\mathcal {J}^{*}=\mathcal {J}_{-}^*=\mathcal {J}_{+}=\mathcal {J}$. This deduces that the left and right annihilators of $\mathcal {J}$ coincide: $l(\mathcal {J})=r(\mathcal {J})$. Let $I:=l(\mathcal {J})=r(\mathcal {J})$. We claim that $I=\mathcal {P}$. The inclusion  $\mathcal {P}\subseteq I$ is obvious. We denote $T_{st}=\{\overline{x}\in r_{st}(H)\mid [\overline{x},1]_{st}=0\}$ and $T=\{x\in r(H)\mid \overline{x}\in T_{st}\}$. Then $I\subseteq T$ since $\mathcal {J}x=0$ if and only if $(\mathcal {J},x)=0$. Now $I$ is an ideal of $r(H)$ satisfying $\mathcal {P}\subseteq I\subseteq T$. So $I/\mathcal {P}$ is an ideal of $r_{st}(H)$ contained in $T/\mathcal {P}=T_{st}$. However, $T_{st}$ contains no nonzero ideals of $r_{st}(H)$ since the form $[-,-]_{st}$ is non-degenerate. This implies that $I=\mathcal {P}$.

$(4)\Rightarrow(1)$ If $[\overline{y},\overline{x}]_{st}=0$ for any $y\in r(H)$, then $[\overline{x}^*,\overline{y}^*]_{st}=0$ since the form is $*$-symmetric. We have $0=[\overline{x}^*,\overline{y}^*]_{st}=(\delta^*_{[\kk]},x^*y^*)=((x\delta_{[\kk]})^*,y^*)$. Thus, $x\delta_{[\kk]}=0$, and hence $x\in l(\mathcal {J})=\mathcal {P}$ and $\overline{x}=0$. Similarly, if $[\overline{x},\overline{y}]_{st}=0$ for any $y\in r(H)$, then $\overline{x}=0$.
\qed

\begin{rem}\label{rem201509}
If the form $[-,-]_{st}$ is non-degenerate, then $\mathcal {J}_{+}=\mathcal {J}$. This yields that $\kk\mid M\otimes M^*$ for any indecomposable non-projective module $M$. It deduces that $M\cong M^{**}$ by Theorem \ref{thm2.7} (1). In this case, the operator $*$ on $r_{st}(H)$ is an involution.
\end{rem}

When $H$ is of finite representation type and the form $[-,-]_{st}$ on $r_{st}(H)$ is non-degenerate, we are able to obtain further information about the nilpotent radical of $r(H)$ described as follows.

\begin{thm}\label{theo2015} If $H$ is of finite representation type and the form $[-,-]_{st}$ on $r_{st}(H)$ is non-degenerate, then the nilpotent radical rad$r(H)$ of $r(H)$ is equal to $\mathcal {P}\cap\mathcal {P}^{\perp}$ if and only if $G_0(H)$ is semiprime.
\end{thm}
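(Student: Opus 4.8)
The plan is to reduce the asserted identity to a single inclusion and then to detect that inclusion after extending scalars to $\mathbb{Q}$. Write $N:=\mathrm{rad}\,r(H)$. Two inclusions come essentially for free. Since the form $[-,-]_{st}$ is non-degenerate, Proposition \ref{theogell} gives $\mathcal{P}_{+}=\mathcal{P}_{-}=\mathcal{P}$, so Proposition \ref{gg20} yields $N\subseteq\mathcal{P}_{+}\cap\mathcal{P}_{-}=\mathcal{P}$. Conversely, $\mathcal{P}\cap\mathcal{P}^{\perp}$ is nilpotent: using $\mathcal{P}^{\perp}=\ker\varphi$ (Proposition \ref{prop2015}) together with the identity $[M][P]=\varphi([M])[P]$ for projective $P$ (established in the proof of Proposition \ref{prop2015}), one gets $\mathcal{P}^{\perp}\mathcal{P}=\mathcal{P}\mathcal{P}^{\perp}=0$, whence $(\mathcal{P}\cap\mathcal{P}^{\perp})^{2}\subseteq\mathcal{P}\mathcal{P}^{\perp}=0$ and therefore $\mathcal{P}\cap\mathcal{P}^{\perp}\subseteq N$. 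Combining the two observations, the equality $N=\mathcal{P}\cap\mathcal{P}^{\perp}$ holds if and only if $N\subseteq\mathcal{P}^{\perp}$, that is, if and only if $\varphi(N)=0$.

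It then remains to prove that $\varphi(N)=0$ if and only if $G_0(H)$ is semiprime, and this is the heart of the matter. I would carry it out by tensoring with $\mathbb{Q}$. Because $H$ is of finite representation type, $r(H)$ is a free $\mathbb{Z}$-module of finite rank, so $B:=\mathbb{Q}\otimes_{\mathbb{Z}}r(H)$ is a finite-dimensional $\mathbb{Q}$-algebra. As $N$ is the largest nilpotent ideal of $r(H)$ and $r(H)/N$ is semiprime, the Jacobson radical of $B$ is exactly $\mathbb{Q}\otimes N$ (the ideal $\mathbb{Q}\otimes N$ is nilpotent, and $B/(\mathbb{Q}\otimes N)=\mathbb{Q}\otimes(r(H)/N)$ is semiprime finite-dimensional, hence semisimple). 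Writing $I:=\mathbb{Q}\otimes\mathcal{P}^{\perp}$ and $\pi\colon B\to B/I=\mathbb{Q}\otimes G_0(H)$ for the projection, the standard fact that for a finite-dimensional algebra the radical of a quotient is the image of the radical gives $\mathrm{rad}\bigl(\mathbb{Q}\otimes G_0(H)\bigr)=\pi(\mathbb{Q}\otimes N)$.

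Consequently $G_0(H)$ is semiprime, equivalently $\mathbb{Q}\otimes G_0(H)$ is semisimple, if and only if $\mathbb{Q}\otimes N\subseteq I=\mathbb{Q}\otimes\mathcal{P}^{\perp}$. Since $\mathcal{P}^{\perp}$ is a saturated subgroup of $r(H)$, being the orthogonal of $\mathcal{P}$ for the non-degenerate form $(-,-)$ (indeed $r(H)/\mathcal{P}^{\perp}\cong G_0(H)$ is free), this containment descends to $N\subseteq\mathcal{P}^{\perp}$, i.e. to $\varphi(N)=0$. Chaining the equivalences $N=\mathcal{P}\cap\mathcal{P}^{\perp}\Leftrightarrow\varphi(N)=0\Leftrightarrow G_0(H)\text{ semiprime}$ completes the proof.

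I expect the main obstacle to lie in the bookkeeping of the scalar extension rather than in any deep new idea: one must justify carefully that $\mathrm{rad}(\mathbb{Q}\otimes r(H))=\mathbb{Q}\otimes N$ (this is where finite representation type is genuinely used, so that $B$ is finite-dimensional and $B/\mathrm{rad}\,B$ is semisimple) and that $\mathcal{P}^{\perp}$ is saturated, so that the rational containment $\mathbb{Q}\otimes N\subseteq\mathbb{Q}\otimes\mathcal{P}^{\perp}$ can be pulled back integrally. A tempting but, in my view, harder alternative would be to argue entirely over $\mathbb{Z}$ by lifting a nilpotent ideal of $G_0(H)$ to $r(H)$; that approach forces one to prove that the image $\varphi(\mathcal{P})$ of the Cartan map is a \emph{faithful} $G_0(H)$-module, which is precisely the delicate point that the passage to $\mathbb{Q}$ sidesteps.
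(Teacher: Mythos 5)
Your proposal is correct, and for one direction it coincides with the paper's proof: the paper likewise combines Proposition \ref{theogell} with Proposition \ref{gg20} to get $\mathrm{rad}\,r(H)\subseteq\mathcal{P}$, uses semiprimeness of $G_0(H)\cong r(H)/\mathcal{P}^{\perp}$ to get $\mathrm{rad}\,r(H)\subseteq\mathcal{P}^{\perp}$, and asserts that $\mathcal{P}\cap\mathcal{P}^{\perp}\subseteq\mathrm{rad}\,r(H)$ is ``obvious, since the form $(-,-)$ is non-degenerate'' --- your identity $\mathcal{P}^{\perp}\mathcal{P}=\mathcal{P}\mathcal{P}^{\perp}=0$, extracted from the proof of Proposition \ref{prop2015}, is precisely the justification the paper leaves implicit, and it yields $(\mathcal{P}\cap\mathcal{P}^{\perp})^{2}=0$ as you say.

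Where you genuinely diverge is the converse: the paper disposes of ``$\mathrm{rad}\,r(H)=\mathcal{P}\cap\mathcal{P}^{\perp}\Rightarrow G_0(H)$ semiprime'' with the single word ``obvious,'' whereas you supply an actual argument. This is real added content, because the point is not formal: for a $\mathbb{Z}$-algebra the nilpotent radical of a quotient need not be the image of the nilpotent radical, so knowing $\varphi(\mathrm{rad}\,r(H))=0$ does not by itself preclude nilpotent ideals of $G_0(H)$ lifting to non-nilpotent ideals of $r(H)$. Your route --- $\mathrm{rad}(\mathbb{Q}\otimes r(H))=\mathbb{Q}\otimes N$ using finite representation type, the finite-dimensional fact $\mathrm{rad}(B/I)=\pi(\mathrm{rad}\,B)$, and saturation of $\mathcal{P}^{\perp}$ (valid since $r(H)/\mathcal{P}^{\perp}\cong G_0(H)$ is $\mathbb{Z}$-free) --- closes exactly this gap, and your reduction of the whole statement to the single condition $\varphi(N)=0$ is a clean repackaging. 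Two small remarks. First, for $\mathrm{rad}(\mathbb{Q}\otimes r(H))=\mathbb{Q}\otimes N$ you also need $r(H)/N$ to be torsion-free so that its semiprimeness survives tensoring with $\mathbb{Q}$; this is easy (if $kx\in N$ with $k\neq 0$, then since $r(H)$ is $\mathbb{Z}$-free the ideal generated by $x$ is nilpotent, so $x\in N$), and you did flag this bookkeeping, but it should be written out. Second, the ``delicate point'' you mention for a purely integral argument --- faithfulness of $\mathcal{P}$ as a $G_0(H)$-module --- is in fact already available in the paper, which cites Lorenz for it in the proof of Proposition \ref{prop2015}; even so, faithfulness alone does not make the converse immediate, so your assessment that the passage to $\mathbb{Q}$ is the cleaner route stands.
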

\proof Since the form $[-,-]_{st}$ on $r_{st}(H)$ is non-degenerate, we have $\mathcal {P}_{+}=\mathcal {P}_{-}=\mathcal {P}$. It follows from Proposition \ref{gg20} that rad$r(H)\subseteq\mathcal {P}$. If $G_0(H)$ is semiprime, the isomorphism $G_0(H)\cong r(H)/\mathcal {P}^{\perp}$ implies that rad$r(H)\subseteq\mathcal {P}^{\perp}$. We obtain that rad$r(H)\subseteq\mathcal {P}\cap\mathcal {P}^{\perp}$. The inclusion $\mathcal {P}\cap\mathcal {P}^{\perp}\subseteq\text{rad}r(H)$ is obvious, since the form $(-,-)$ is non-degenerate. Conversely, if rad$r(H)=\mathcal {P}\cap\mathcal {P}^{\perp}$, it is obvious that $r(H)/\mathcal {P}^{\perp}\cong G_0(H)$ is semiprime.
\qed

\begin{rem}
The map $\varphi:r(H)\rightarrow G_0(H)$ given in (\ref{equgel}) restricting to the ideal $\mathcal {P}$ gives rise to the Cartan map $\varphi|_{\mathcal {P}}:\mathcal {P}\rightarrow G_0(H)$, whose kernel is exactly $\ker(\varphi|_{\mathcal {P}})=\mathcal {P}\cap\ker\varphi=\mathcal {P}\cap\mathcal {P}^{\perp}$.
\end{rem}

\begin{exa}\label{exa201509}
If $H$ is a finite dimensional pointed Hopf algebra of rank one (e.g., Taft algebras and generalized Taft algebras in \cite{CVZ, LZ}, Radford Hopf algebras in \cite{WLZ2}), then $G_0(H)$ is semiprime and the form $[-,-]_{st}$ on $r_{st}(H)$ is non-degenerate since $\mathcal {P}_{+}=\mathcal {P}_{-}=\mathcal {P}$. It follows that rad$r(H)=\mathcal {P}\cap\mathcal {P}^{\perp}=\ker(\varphi|_{\mathcal {P}})$, which is a principal ideal, see \cite{WLZ1, WLZ2} for more details.
\end{exa}

In the rest of this section, we always assume that $H$ is of finite representation type, not semisimple and the form $[-,-]_{st}$ on $r_{st}(H)$ is non-degenerate. In this case, we show that the complexified stable Green algebra $R_{st}(H):=\mathbb{C}\otimes_{\mathbb{Z}}r_{st}(H)$ admits a group-like algebra structure, hence it is a bi-Frobenius algebra, but not a Hopf algebra.

The notion of a group-like algebra was introduced by Doi in \cite{Doi1} generalizing the group algebra of a finite group and a scheme ring (Bose-Mesner algebra) of a non-commutative association scheme.

\begin{defi}\label{defP1} Let $(A,\varepsilon,\mathbf{b},*)$ be a quadruple, where $A$ is a finite dimensional algebra over a field $\kk$ with unit 1, $\varepsilon$ is an algebra morphism from $A$ to $\kk$, the set $\mathbf{b}=\{b_i\mid i\in I\}$ is a $\kk$-basis of $A$ such that $0\in I$ and $b_0=1$, and $*$ is an involution of the index set $I$. Then $(A,\varepsilon,\mathbf{b},*)$ is called a group-like algebra if the following hold:
\begin{enumerate}
  \item[(G1)] $\varepsilon(b_i)=\varepsilon(b_{i^{*}})\neq0$ for all $i\in I$.
  \item[(G2)] $p_{ij}^k=p_{j^*i^*}^{k^*}$ for all $i,j,k\in I$, where $p_{ij}^k$'s are the structure constants for $\mathbf{b}$ defined by $b_ib_j=\sum_{k\in I}p_{ij}^kb_k$.
  \item[(G3)] $p_{ij}^0=\delta_{i,j^*}\varepsilon(b_i)$ for all $i,j\in I$.
\end{enumerate}
\end{defi}

\begin{rem}\label{rem2.6}
\begin{enumerate}
\item Let $(A,\varepsilon,\mathbf{b},*)$ be a group-like algebra. Then $A$ becomes a coalgebra with a comultiplication given by $\bigtriangleup(b_i)=\frac{1}{\varepsilon(b_i)}b_i\otimes b_i$,  see \cite[Remark 3.2]{Doi1}.  Let $\phi\in A^*$ such that $\phi(b_i)=\delta_{0,i}$ and $t=\sum_{i\in I}b_i$. Define the $\kk$-linear map $S$ from $A$ to itself by $S(b_i)=b_{i^*}$ for any $i\in I$. Then $(A,\phi,t,S)$ becomes a bi-Frobenius algebra with a pair of dual bases $\{b_i,\frac{b_{i^*}}{\varepsilon(b_i)}\mid i\in I\}$. We refer to \cite{DT} for the notion of a bi-Frobenius algebra.
\item A group-like algebra is not a Hopf algebra in general. If it is, it must  be a group algebra,  see \cite[Corollary 2]{Ha}.  Thus,  a bi-Frobenius algebra coming from a group-like algebra is not a Hopf algebra if the underlying algebra  is not a group algebra.
\end{enumerate}
\end{rem}

Let $\{[X_i]\mid i\in\mathbb{I}\}$ be the set  of all non-projective indecomposable modules in $\text{ind}(H)$.  By definition $0\in\mathbb{I}$ since $[X_0]:=[\kk]$ is not projective. Note that $X$ is not projective if and only if $X^*$ is not projective. Thus, the duality functor $*$ of $H$-mod induces an involution (see Remark \ref{rem201509}) on the index set $\mathbb{I}$ defined by $[X_{i^*}]:=[X_i^*]$ for any $i\in\mathbb{I}$.

\begin{prop}
The stable Green ring $r_{st}(H)$ is a transitive fusion ring with respect to the basis $\{\overline{[X_i]}\mid i\in\mathbb{I}\}$.
\end{prop}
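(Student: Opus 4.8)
The plan is to verify the defining properties of a transitive fusion ring one by one for the pair $(r_{st}(H),\{\overline{[X_i]}\mid i\in\mathbb{I}\})$. Writing $b_i:=\overline{[X_i]}$ and defining the structure constants by $b_ib_j=\sum_{k\in\mathbb{I}}p_{ij}^k b_k$, what must be checked is: (i) the $b_i$ form a finite $\mathbb{Z}$-basis and $p_{ij}^k\in\mathbb{Z}_{\ge 0}$; (ii) $b_0=\overline{[\kk]}$ is the unit; (iii) $\ast$ is an involution of $\mathbb{I}$ with $p_{ij}^k=p_{j^\ast i^\ast}^{k^\ast}$ and $p_{ij}^0=\delta_{i,j^\ast}$; and (iv) transitivity, namely that for all $i,j\in\mathbb{I}$ there are $k,l\in\mathbb{I}$ with $p_{ik}^j>0$ and $p_{lj}^i>0$. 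Items (i)--(iii) are essentially bookkeeping assembled from Section~5, while (iv) carries the real content.

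For (i): since $H$ is of finite representation type, $\mathbb{I}$ is finite; by Proposition~\ref{g100} the classes of non-projective indecomposables form a $\mathbb{Z}$-basis of $r_{st}(H)\cong r(H)/\mathcal{P}$, and as $H$-\underline{mod} is Krull--Schmidt the non-projective indecomposable summands of $X_i\otimes X_j$ occur with well-defined non-negative integer multiplicities, which are precisely the $p_{ij}^k$. For (ii), $\kk$ is non-projective because $H$ is not semisimple, so $b_0$ is a genuine basis element and is plainly the unit. For (iii), the standing non-degeneracy hypothesis forces $X_i\cong X_i^{\ast\ast}$ by Remark~\ref{rem201509}, so $\ast$ is an involution of $\mathbb{I}$; since the duality functor is a monoidal anti-equivalence, comparing the coefficient of $b_{k^\ast}$ on the two sides of $(b_ib_j)^\ast=b_{j^\ast}b_{i^\ast}$ gives $p_{ij}^k=p_{j^\ast i^\ast}^{k^\ast}$. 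The coefficient $p_{ij}^0$ is the multiplicity of $\kk$ in $X_i\otimes X_j$; by Theorem~\ref{thm2.7}(1) and Corollary~\ref{gprop2} this vanishes unless $j=i^\ast$, while for $j=i^\ast$ it equals $1$ by Corollary~\ref{gg19.1} together with $\kk\mid X_i\otimes X_i^\ast$ (which holds since $\mathcal{P}_+=\mathcal{P}$ by Proposition~\ref{theogell}). Hence $p_{ij}^0=\delta_{i,j^\ast}$.

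For (iv) I would exploit self-duality. Non-degeneracy gives $\mathcal{P}_+=\mathcal{P}_-=\mathcal{P}$ (Proposition~\ref{theogell}), so for every non-projective $X_i$ we have $\kk\mid X_i\otimes X_i^\ast$ and $\kk\mid X_i^\ast\otimes X_i$; equivalently $b_0$ occurs with positive coefficient in both $b_ib_{i^\ast}$ and $b_{i^\ast}b_i$. Fix $i,j$. From $b_ib_{i^\ast}=b_0+(\text{non-negative combination of other }b)$, multiplying on the right by $b_j$ shows that $b_j$ occurs with positive coefficient in $b_ib_{i^\ast}b_j=b_i(b_{i^\ast}b_j)$. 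Expanding $b_{i^\ast}b_j=\sum_k p_{i^\ast j}^k b_k$ with non-negative coefficients, the coefficient of $b_j$ in $\sum_k p_{i^\ast j}^k\,b_ib_k$ equals $\sum_k p_{i^\ast j}^k p_{ik}^j$ and is therefore positive; hence $p_{ik}^j>0$ for some $k$, giving the first half of transitivity. The symmetric manipulation $b_jb_{i^\ast}b_i=(b_jb_{i^\ast})b_i$, using that $b_0$ occurs in $b_{i^\ast}b_i$, produces $l$ with $p_{li}^j>0$, giving the second half.

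The main obstacle is precisely (iv), and its heart is the non-vanishing of products of basis elements, i.e.\ that $X_i\otimes X_j$ is never projective. This is exactly what self-duality secures: were $X_i\otimes X_j$ projective, then $X_i\otimes X_j\otimes X_j^\ast$ would be projective as well, yet it contains $X_i\otimes(X_j\otimes X_j^\ast)\supseteq X_i\otimes\kk=X_i$, a non-projective summand, a contradiction. I expect this point---the interplay between the non-degeneracy of $[-,-]_{st}$ (through $\mathcal{P}_\pm=\mathcal{P}$) and the return-to-the-unit property $\kk\mid X\otimes X^\ast$---to be the only step requiring real input, the remaining axioms being direct transcriptions of the results of Section~5.
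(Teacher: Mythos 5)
Your proof is correct and takes essentially the same route as the paper: you verify the fusion-ring axioms (which the paper calls straightforward) via Theorem \ref{thm2.7}, Corollaries \ref{gprop2} and \ref{gg19.1} and Remark \ref{rem201509}, and your transitivity argument --- using $\kk\mid X_i\otimes X_i^*$ and $\kk\mid X_i^*\otimes X_i$ from $\mathcal{P}_+=\mathcal{P}_-=\mathcal{P}$ and passing through $b_ib_{i^*}b_j$ and $b_jb_{i^*}b_i$ --- is exactly the paper's passage through $X_j\otimes X_j^*\otimes X_i$ and $X_i\otimes X_j^*\otimes X_j$, up to renaming indices. The only cosmetic difference is that you manipulate non-negative structure constants in $r_{st}(H)$ where the paper argues with direct summands of modules (noting there, as you do implicitly, that the relevant summand must be non-projective); these are equivalent formulations.
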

\proof It is straightforward to verify that $r_{st}(H)$ satisfies the conditions of a fusion ring given in \cite[Definition1.42.2]{etingof}, where the group homomorphism $\tau$ from $r_{st}(H)$ to $\mathbb{Z}$ is determined by $\tau(\overline{x})=(\delta^*_{[\kk]},x)$ for any $\overline{x}\in r_{st}(H)$.
The stable Green ring $r_{st}(H)$ is transitive (\cite[Definition 1.45.1]{etingof}): for any $i,j\in\mathbb{I}$, there exist $k,l\in\mathbb{I}$ such that $\overline{[X_j]}\overline{[X_k]}$ and $\overline{[X_l]}\overline{[X_j]}$ contain $\overline{[X_i]}$ with a nonzero coefficient. In fact, we have $\kk\mid X_j\otimes X_j^*$ since $\mathcal {P}_{+}=\mathcal {P}_{-}=\mathcal {P}$. This implies that $X_i\mid X_j\otimes X_j^*\otimes X_i$. Then we may find an indecomposable non-projective module $X_k$ in $X_j^*\otimes X_i$ such that $X_i\mid X_j\otimes X_k$. Similarly, $X_i\mid X_i\otimes X_j^*\otimes X_j$, then there exists some $X_l$ in $X_i\otimes X_j^*$ such that $X_i\mid X_l\otimes X_j$.
\qed

\begin{rem}
The stable Green ring $r_{st}(H)$ is a fusion ring under the condition that the form $[-,-]_{st}$ on $r_{st}(H)$ is non-degenerate. Nevertheless, the stable category $H$-\underline{mod} is not necessary semisimple by Proposition \ref{prop20166}. A typical example is that the stable category of any Taft algebra of dimension $n^2$ for $n>2$ is not semisimple, while the stable Green ring of the Taft algebra is a fusion ring.
\end{rem}

The fact that $r_{st}(H)$ is a transitive fusion ring enables us to define the Frobenius-Perron dimension of $\overline{[X_i]}$ for any $i\in\mathbb{I}$. Let $\text{FPdim}(\overline{[X_i]})$ be the maximal nonnegative eigenvalue of the matrix of the left multiplication by $\overline{[X_i]}$ with respect to the basis $\{\overline{[X_i]}\mid i\in\mathbb{I}\}$ of $r_{st}(H)$. Then $\text{FPdim}(\overline{[X_i]})$ is called the \textit{Frobenius-Perron dimension} of $\overline{[X_i]}$. Extending $\text{FPdim}$ linearly from the basis $\{\overline{[X_i]}\mid i\in\mathbb{I}\}$ of $r_{st}(H)$ to $R_{st}(H)$, we obtain a functional $\text{FPdim}:R_{st}(H)\rightarrow\mathbb{C}$. The functional $\text{FPdim}$ has the following properties,  see Proposition 1.45.4, Proposition 1.45.5 and Proposition 1.45.8 in \cite{etingof}.

\begin{prop} For any $i\in\mathbb{I}$, we have the following:
\begin{enumerate}
  \item $\text{FPdim}(\overline{[X_i]})\geq1$.
  \item The functional $\text{FPdim}:R_{st}(H)\rightarrow\mathbb{C}$ is a ring homomorphism.
  \item $\text{FPdim}(\overline{[X_i]})=\text{FPdim}(\overline{[X_{i^*}]})$.
\end{enumerate}
\end{prop}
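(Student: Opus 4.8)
The plan is to derive all three statements from Perron--Frobenius theory applied to the nonnegative integer matrices representing left multiplication on $r_{st}(H)$, exactly as for an arbitrary transitive fusion ring. Write the structure constants as $\overline{[X_i]}\,\overline{[X_j]}=\sum_{k}p_{ij}^{k}\,\overline{[X_k]}$ with $p_{ij}^{k}\in\mathbb{Z}_{\geq 0}$, let $N_i$ be the matrix of left multiplication by $\overline{[X_i]}$, so $(N_i)_{kj}=p_{ij}^{k}$, and let $R_j$ be right multiplication by $\overline{[X_j]}$. Then $d_i:=\text{FPdim}(\overline{[X_i]})$ is the spectral radius of $N_i$, since for a nonnegative matrix the spectral radius is its largest nonnegative eigenvalue, matching the definition given above.

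The first and central step is to produce a single strictly positive vector that is a common eigenvector of every $N_i$. By associativity every $N_i=L_{\overline{[X_i]}}$ commutes with the nonnegative matrix $A:=\sum_{j}R_j$. Transitivity of $r_{st}(H)$ says that for all $i,j$ there is a $k$ with $\overline{[X_i]}\mid\overline{[X_j]}\,\overline{[X_k]}$, i.e. $p_{jk}^{i}>0$; hence $A_{ij}=\sum_k p_{jk}^{i}>0$ for all $i,j$, so $A$ is a strictly positive matrix. By Perron--Frobenius its top eigenvalue is simple, with a one-dimensional eigenspace $\mathbb{C}v$ spanned by a vector $v>0$. As each $N_i$ commutes with $A$, it preserves $\mathbb{C}v$, so $N_i v=\mu_i v$ with $\mu_i\geq 0$; and because a nonnegative matrix with a strictly positive eigenvector attains its spectral radius there, $\mu_i=d_i$. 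Thus $N_i v=d_i v$ for all $i$, with $v>0$ fixed.

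Given this, the remaining parts are short. For Part~(2), the left regular representation is an algebra map, so $N_iN_j=\sum_k p_{ij}^{k}N_k$; applying both sides to $v$ and comparing coefficients yields $d_id_j=\sum_k p_{ij}^{k}d_k$, which is exactly the assertion that $\text{FPdim}$ is a ring homomorphism. For Part~(3), the Frobenius-reciprocity relation $p_{ij}^{k}=p_{i^{*}k}^{j}$ among the structure constants (a restatement of the adjunction $\Psi$, $\Phi$ used throughout Section~2, together with $X_{i}^{**}\cong X_i$ from Remark~\ref{rem201509}) gives $N_{i^{*}}=N_i^{T}$; a matrix and its transpose share the same spectral radius, so $d_{i^{*}}=d_i$. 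For Part~(1), note $d_0=1$ since $N_0$ is the identity, and $d_i>0$ since $N_i\neq 0$ admits the positive eigenvector $v$. By Remark~\ref{rem201509} we have $\kk\mid X_i\otimes X_i^{*}$, and by Corollary~\ref{gg19.1} the multiplicity of $\kk$ there equals $1$, so $p_{ii^{*}}^{0}=1$. Hence $d_id_{i^{*}}=\sum_k p_{ii^{*}}^{k}d_k\geq p_{ii^{*}}^{0}d_0=1$, and combining with Part~(3) gives $d_i^{2}\geq 1$, whence $d_i\geq 1$.

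The main obstacle is precisely the first step: rigorously extracting the common positive eigenvector $v$, which rests on translating transitivity into positivity (hence irreducibility) of $A=\sum_j R_j$ and on the simplicity of the Perron eigenvalue to conclude that the commuting family $\{N_i\}$ shares the eigenline $\mathbb{C}v$. These are the only genuinely non-formal inputs; once $v$ is in hand, Parts~(1)--(3) are immediate algebraic consequences. As these facts are Propositions~1.45.4, 1.45.5 and 1.45.8 of \cite{etingof} specialized to the transitive fusion ring $r_{st}(H)$, in the write-up I would verify these hypotheses and then cite that reference.
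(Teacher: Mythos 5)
Your proposal is correct: the paper itself gives no proof and simply cites Propositions 1.45.4, 1.45.5 and 1.45.8 of \cite{etingof}, and your argument is precisely the standard Perron--Frobenius proof behind those citations (positivity of $\sum_j R_j$ from transitivity, the common positive eigenvector for the commuting family $\{N_i\}$, reciprocity $N_{i^*}=N_i^{T}$, and $p_{ii^*}^{0}=1$ via Theorem \ref{thm2.7} and Corollary \ref{gg19.1}). Since you verify the transitive-fusion-ring hypotheses and would then cite the same reference, this is essentially the same approach as the paper's.
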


Let $x_i:=\text{FPdim}(\overline{[X_i]})\overline{[X_i]}$ for any $i\in\mathbb{I}$. Then $\mathbf{b}=\{x_i\mid i\in\mathbb{I}\}$ is a basis of $R_{st}(H)$.

\begin{thm}
The quadruple $(R_{st}(H),\text{FPdim},\mathbf{b},*)$ is a group-like algebra.
\end{thm}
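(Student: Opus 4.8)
The plan is to verify directly the three defining axioms (G1)--(G3) of Definition \ref{defP1}, after disposing of the structural requirements, which are immediate. Indeed, $\varepsilon=\text{FPdim}$ is an algebra morphism $R_{st}(H)\to\mathbb{C}$ by the preceding proposition; writing $d_i:=\text{FPdim}(\overline{[X_i]})\geq1>0$ for $i\in\mathbb{I}$, the set $\mathbf{b}=\{x_i=d_i\overline{[X_i]}\}$ is just a positive rescaling of the $\mathbb{Z}$-basis $\{\overline{[X_i]}\}$ of $r_{st}(H)$, hence a $\mathbb{C}$-basis of $R_{st}(H)$; we have $x_0=d_0\overline{[\kk]}=1$ since $d_0=\text{FPdim}(1)=1$; and $*$ is an involution of $\mathbb{I}$ by Remark \ref{rem201509} (the isomorphism $X_i\cong X_i^{**}$ gives $(i^*)^*=i$). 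The first step will be to express the structure constants of $\mathbf{b}$ through the decomposition multiplicities. Writing $\overline{[X_i]}\,\overline{[X_j]}=\sum_k c_{ij}^k\,\overline{[X_k]}$, where $c_{ij}^k\in\mathbb{Z}_{\geq0}$ is the multiplicity of $X_k$ in $X_i\otimes X_j$ (projective summands being killed in $r_{st}(H)$), substitution yields
\begin{equation*}
x_ix_j=\sum_k\frac{d_id_j}{d_k}c_{ij}^k\,x_k,\qquad\text{hence}\qquad p_{ij}^k=\frac{d_id_j}{d_k}c_{ij}^k.
\end{equation*}

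Axiom (G1) is then immediate: $\varepsilon(x_i)=d_i\,\varepsilon(\overline{[X_i]})=d_i^2\geq1$, so $\varepsilon(x_i)\neq0$, and $\varepsilon(x_i)=\varepsilon(x_{i^*})$ because $d_i=d_{i^*}$ by part (3) of the preceding proposition. For axiom (G2) I would use that $*$ is an anti-automorphism of $r_{st}(H)$: applying it to $\overline{[X_i]}\,\overline{[X_j]}=\sum_k c_{ij}^k\,\overline{[X_k]}$ gives $\overline{[X_{j^*}]}\,\overline{[X_{i^*}]}=\sum_k c_{ij}^k\,\overline{[X_{k^*}]}$, and comparing coefficients of $\overline{[X_{k^*}]}$ produces $c_{j^*i^*}^{k^*}=c_{ij}^k$. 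Together with $d_{i^*}=d_i$ and $d_{k^*}=d_k$, the displayed formula for $p_{ij}^k$ immediately yields $p_{ij}^k=p_{j^*i^*}^{k^*}$.

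The heart of the argument, and the main obstacle, will be axiom (G3), which reduces to computing the constant-term multiplicity $c_{ij}^0$, i.e. the multiplicity of $\kk$ in $X_i\otimes X_j$, and this is exactly where the non-degeneracy hypothesis is indispensable. By Corollary \ref{gg19.1} this multiplicity is always $0$ or $1$. Since $[-,-]_{st}$ is non-degenerate we have $\mathcal{P}_{+}=\mathcal{P}_{-}=\mathcal{P}$, so $\kk\mid M\otimes M^*$ for every indecomposable non-projective $M$ and, by Remark \ref{rem201509}, $X_i\cong X_i^{**}$ for all $i$. If $j=i^*$, then $X_i\otimes X_j=X_i\otimes X_i^*$ contains $\kk$ with multiplicity exactly $1$. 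Conversely, if $\kk\mid X_i\otimes X_j$, I would set $X=X_j^*$ (so $X\cong X^{**}$ and $X^*\cong X_j$), whereupon Corollary \ref{gprop2}(1) forces $X_i\cong X^{**}\cong X_j^*=X_{j^*}$, i.e. $j=i^*$. Thus $c_{ij}^0=\delta_{i,j^*}$.

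Feeding $c_{ij}^0=\delta_{i,j^*}$ into the structure-constant formula, and using $d_0=1$ together with $d_j=d_{i^*}=d_i$ when $i=j^*$, gives
\begin{equation*}
p_{ij}^0=d_id_j\,\delta_{i,j^*}=d_i^2\,\delta_{i,j^*}=\delta_{i,j^*}\,\varepsilon(x_i),
\end{equation*}
which is precisely (G3). With all three axioms in hand, the quadruple $(R_{st}(H),\text{FPdim},\mathbf{b},*)$ satisfies Definition \ref{defP1}, so it is a group-like algebra, as claimed.
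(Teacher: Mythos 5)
Your proposal is correct, and its overall skeleton (verify (G1)--(G3) after the structural checks) matches the paper's; indeed your (G1) and (G2) arguments are essentially identical to the paper's, which applies the anti-automorphism $*$ directly to $x_ix_j=\sum_k p_{ij}^k x_k$ using $x_i^*=x_{i^*}$ rather than to the integer relation $\overline{[X_i]}\,\overline{[X_j]}=\sum_k c_{ij}^k\overline{[X_k]}$ -- the same computation in different clothing. Where you genuinely diverge is (G3). The paper never introduces the multiplicities $c_{ij}^k$; it evaluates $p_{ij}^0$ in a single chain of identities in the bilinear-form calculus of Section 4: $p_{ij}^0=\text{FPdim}(\overline{[X_i]})\text{FPdim}(\overline{[X_j]})(\delta^*_{[\kk]},[X_i][X_j])=\cdots([X_j]^{**}\delta^*_{[\kk]},[X_i])=\cdots(\delta^*_{[X_{j^*}]},[X_i])=\delta_{i,j^*}\text{FPdim}(x_i)$, using Lemma \ref{gg16}(2), the identity $\delta_{[\kk]}[X]=\delta_{[X]}$ from Theorem \ref{gg19} (valid for all non-projective indecomposables precisely because non-degeneracy gives $\mathcal{J}_+=\mathcal{J}$), and the dual-basis relation of Lemma \ref{gg17}(1). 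You instead unwind $p_{ij}^0$ to the multiplicity $c_{ij}^0$ of $\kk$ in $X_i\otimes X_j$ and compute it at the module level: Corollary \ref{gprop2}(1) (with $X=X_j^*$, legitimized by Remark \ref{rem201509}) forces $j=i^*$ whenever $\kk\mid X_i\otimes X_j$, and Corollary \ref{gg19.1} together with $\mathcal{P}_+=\mathcal{P}$ pins the multiplicity at exactly $1$ in that case. (One pedantic note: Corollary \ref{gg19.1} as stated only bounds the multiplicity for products of the form $X\otimes X^*$, not general $X_i\otimes X_j$; your argument is nonetheless complete, since your converse step reduces the general case to $j=i^*$ before the bound is needed.) The trade-off: your route is more elementary and makes transparent that (G3) is literally a statement about the trivial summand of $X_i\otimes X_j$, resting directly on the Section 2 results (Theorem \ref{thm2.7} via Corollary \ref{gprop2}); the paper's route is shorter and stays uniformly inside the $\delta$-calculus and the form $(-,-)$, encoding the same two facts -- existence-with-multiplicity-one and uniqueness of the dual partner -- in the identities $\delta_{[\kk]}[X]=\delta_{[X]}$ and $(\delta^*_{[Z]},[X])=\delta_{[Z],[X]}$.
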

\proof We need to verify the conditions (G1)-(G3) given in Definition \ref{defP1}. The condition (G1) is obvious.
To verify the condition (G2), we have
\begin{equation}\label{equ5.4}x_i^*=\text{FPdim}(\overline{[X_i]})(\overline{[X_i]})^*
=\text{FPdim}(\overline{[X_{i^*}]})\overline{[X_{i^*}]}=x_{i^*}.\end{equation}
Now for $i,j\in\mathbb{I}$, we suppose that \begin{equation}\label{equch5.2}x_i x_j=\sum_{k\in\mathbb{I}}p_{ij}^{k}x_k,\end{equation} where  $p_{ij}^k\in\mathbb{C}$. On the one hand, applying the duality operator $*$ to the equality (\ref{equch5.2}) and using (\ref{equ5.4}), we obtain that $x_{j^*} x_{i^*}=\sum_{k\in\mathbb{I}}p_{ij}^{k}x_{k^*}.$ On the other hand, we have  $x_{j^*} x_{i^*}=\sum_{l\in\mathbb{I}}p_{j^*i^*}^{l}x_{l}$. It follows that $p_{ij}^k=p_{j^*i^*}^{k^*}$ for any $i,j,k\in\mathbb{I}$.
Now we verify the condition (G3):
\begin{linenomath}
\begin{align*}
p_{ij}^0&=\text{FPdim}(\overline{[X_i]})\text{FPdim}(\overline{[X_j]})(\delta^*_{[\kk]},[X_i][X_j])\\
&=\text{FPdim}(\overline{[X_i]})\text{FPdim}(\overline{[X_j]})([X_j]^{**}\delta^*_{[\kk]},[X_i])\\
&=\text{FPdim}(\overline{[X_i]})\text{FPdim}(\overline{[X_j]})(\delta^*_{[X_{j^*}]},[X_i])\\
&=\text{FPdim}(\overline{[X_i]})\text{FPdim}(\overline{[X_j]})\delta_{i,j^*}\\
&=\delta_{i,j^*}\text{FPdim}(x_i).
\end{align*}
\end{linenomath}
Therefore, the condition (G3) is satisfied.
\qed

As noted in Remark \ref{rem2.6}, a group-like algebra is a bi-Frobenius algebra. Roughly speaking, a bi-Frobenius algebra $(A,\phi,t,S)$ is both a Frobenius algebra $(A,\phi)$ and a Frobenius coalgebra $(A,t)$ together with an antipode $S$, where $\phi\in \int^r_{A^*}$ and $t\in \int^r_A$  are right integrals of Frobenius algebras \cite{Doi2}.

Now let us look at the bi-Frobenius algebra structure induced from the group-like algebra structure  on $R_{st}(H)$.  The integral  $\phi$ is given by $\phi(x_{i})=\delta_{0,i},$ for $i\in\mathbb{I}$.  Equivalently,
\begin{linenomath} $$\phi(\overline{[X_{i}]})=\begin{cases}
1, & i=0,\\
0, & i\neq0.
\end{cases}$$
\end{linenomath}
The set $\{x_i,\frac{x_i^*}{\text{FPdim}(x_{i})}\mid i\in\mathbb{I}\}$ forms a pair of dual bases of $(R_{st}(H),\phi)$. This is equivalent to saying that $\{\overline{[X_i]},\overline{[X_{i^*}]}\mid i\in\mathbb{I}\}$ is a pair of dual bases of $R_{st}(H)$ with respect to the integral  $\phi$. From the observation above, we conclude that the integral  $\phi$ is nothing but the map determined by the form $[-,-]_{st}$, namely, $\phi(\overline{x})=[\overline{x},1]_{st}$ for $\overline{x}\in R_{st}(H)$.

The stable Green algebra $R_{st}(H)$ is a coalgebra with the counit given by $\text{FPdim}$, and the comultiplication $\bigtriangleup$ defined by $\bigtriangleup(x_{i})=\frac{1}{\text{FPdim}(x_{i})}x_{i}\otimes x_{i},$ or equivalently,
\begin{linenomath} $$\bigtriangleup(\overline{[X_{i}]})=\frac{1}{\text{FPdim}(\overline{[X_{i}]})}\overline{[X_{i}]}\otimes \overline{[X_{i}]},$$\end{linenomath}  for $i\in\mathbb{I}$. Let $t=\sum_{i\in\mathbb{I}}x_{i}=\sum_{i\in\mathbb{I}}\text{FPdim}(\overline{[X_{i}]})\overline{[X_i]}$. Then $t$ is an integral of $R_{st}(H)$ associated to the counit $\text{FPdim}$. Now $(R_{st}(H),t)$ becomes a Frobenius coalgebra. Define a map $S:R_{st}(H)\rightarrow R_{st}(H)$ by $S(x_{i})=x_{i^*},$ that is, $S(\overline{[X_i]})=\overline{[X_{i^*}]}$ for $i\in \mathbb{I}.$ The map $S$ is exactly the duality operator $*$ on $R_{st}(H)$. It is an anti-algebra and anti-coalgebra morphism, so is an antipode of $R_{st}(H)$. Now the quadruple $(R_{st}(H),\phi,t,S)$ forms a bi-Frobenius algebra which is in general not a Hopf algebra.

\section{\bf  Application to Radford Hopf algebras}
In this section, we consider a special finite dimensional pointed Hopf algebra of rank one, known as
a Radford Hopf algebra. We describe the bi-Frobenius algebra structure on the complexified  stable Green algebra of the Hopf algebra from the polynomial point of view.

Given two integers $m>1$ and $n>1$. Let $\omega$ be a primitive $mn$-th root of unity and $H$ an algebra generated by $g$ and $y$ subject to relations
\begin{linenomath} $$g^{mn}=1,\ yg=\omega^{-m}gy,\ y^n=g^n-1.$$\end{linenomath}
Then $H$ is a Hopf algebra whose comultiplication $\bigtriangleup$, counit $\varepsilon$, and
antipode $S$ are given respectively by
\begin{linenomath} $$\bigtriangleup(y)=y\otimes g+1\otimes y,\ \varepsilon(y)=0,\ S(y)=-yg^{-1},$$
$$\bigtriangleup(g)=g\otimes g,\ \varepsilon(g)=1,\ S(g)=g^{-1}.$$\end{linenomath}
The Hopf algebra $H$ is a finite dimensional pointed Hopf algebra of rank one, it was introduced by
Radford in \cite{Rad} so as to give an example of Hopf algebras whose
Jacobson radical is not a Hopf ideal.

The Green ring and the stable Green ring of the Radford Hopf algebra $H$ can be presented by generators and relations.
Let $\mathbb{Z}[Y,Z,X_1,X_2,\cdots,X_{m-1}]$ be a polynomial ring over $\mathbb{Z}$ in  variables $Y,Z,X_1,X_2,\cdots,X_{m-1}$. The Green ring $r(H)$ of $H$ is isomorphic to the quotient ring of $\mathbb{Z}[Y,Z,X_1,X_2,\cdots,X_{m-1}]$ modulo the ideal generated by the  elements from (\ref{equT8}) to (\ref{equT9})  (see \cite[Theorem 8.2]{WLZ2}):
\begin{equation}\label{equT8}Y^n-1,\ (1+Y-Z)F_n(Y,Z),\ YX_1-X_1,\ ZX_1-2X_1,\end{equation}
\begin{equation}\label{equT15}X_1^j-n^{j-1}X_j,\ \textrm{for}\ 1\leq j\leq m-1,\end{equation}
\begin{equation}\label{equT9}X_1^m-n^{m-2}(1+Y+\cdots+Y^{n-1})F_n(Y,Z),\end{equation}
where $F_n(Y,Z)$ is the Dickson polynomial (of the second type) defined recursively by
$F_1(Y,Z)=1,$ $F_2(Y,Z)=Z$ and $F_k(Y,Z)=ZF_{k-1}(Y,Z)-YF_{k-2}(Y,Z)$ for $k\geq3$. More precisely, the polynomial $F_k(y,z)$ can be expressed as follows (see \cite[Lemma 3.11]{CVZ}):
\begin{linenomath} \begin{equation*}F_k(Y,Z)=\sum_{i=0}^{\lfloor\frac{k-1}{2}\rfloor}(-1)^i\binom{k-1-i}{i}Y^iZ^{k-1-2i},\end{equation*}\end{linenomath}
where $\lfloor\frac{k-1}{2}\rfloor$ stands for the biggest integer  not more than $\frac{k-1}{2}$.

The Grothendieck ring $G_0(H)$ of $H$  is isomorphic to the quotient ring of $\mathbb{Z}[Y,X_1,X_2,\cdots,X_{m-1}]$ modulo the ideal generated by $Y^n-1,YX_1-X_1,X_1^j-n^{j-1}X_j$ for $1\leq j\leq m-1$ and $X_1^m-n^{m-1}(1+Y+\cdots +Y^{n-1})$ (see \cite[Corollary 8.3]{WLZ2}).

The stable Green ring $r_{st}(H)$ of $H$ is isomorphic to the stable Green ring of a Taft algebra of dimension $n^2$ (see \cite[Section 7]{WLZ2}), while the latter is isomorphic to the quotient ring $\mathbb{Z}[Y,Z]/I$, where $I$ is the ideal of $\mathbb{Z}[Y,Z]$ generated by $Y^n-1$ and $F_n(Y,Z)$ (see \cite[Proposition 6.1]{WLZ1}).

The form $[-,-]_{st}$ on $r_{st}(H)$ is non-degenerate (see Example \ref{exa201509}). As shown in Section 5, there is a bi-Frobenius algebra structure on the  complexified stable Green algebra $\mathbb{C}\otimes_{\mathbb{Z}}r_{st}(H)\cong\mathbb{C}[Y,Z]/I$. In the following, we shall describe the bi-Frobenius algebra structure on $\mathbb{C}[Y,Z]/I$  using a new basis  rather than the canonical basis  consisting of indecomposable non-projective $H$-modules.
We need the following inverse version of the Dickson polynomials.

\begin{lem}\cite[Lemma 6.4]{WLZ1}\label{lem201509}
For any $j\geq 1$, we have
\begin{linenomath} $$Z^j=\sum_{k=0}^{\lfloor\frac{j}{2}\rfloor}\begin{pmatrix}
                                                           j \\
                                                           k \\
                                                         \end{pmatrix}
\frac{j+1-2k}{j+1-k}Y^kF_{j+1-2k}(Y,Z).$$\end{linenomath}
\end{lem}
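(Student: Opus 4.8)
The plan is to reduce the identity to the single multiplicative rule that governs how $Z$ acts on the Dickson polynomials, and then run an induction on $j$; I will also indicate a closed‑form ``Binet'' argument as a cleaner alternative. First I would rewrite the defining recurrence $F_k = ZF_{k-1}-YF_{k-2}$ in the form
\[
Z\,F_m(Y,Z) = F_{m+1}(Y,Z) + Y\,F_{m-1}(Y,Z), \qquad m\geq 1,
\]
adopting the convention $F_0(Y,Z):=0$ so that the case $m=1$ (namely $ZF_1=F_2$) is subsumed. I would also record the simplification of the coefficient,
\[
\binom{j}{k}\frac{j+1-2k}{j+1-k} = \binom{j}{k}-\binom{j}{k-1}=:c_{j,k},
\]
the ballot numbers, noting that $c_{j,k}=0$ precisely when $2k=j+1$, i.e.\ exactly where the index $j+1-2k$ would force the term $F_0$.

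With the abbreviation $Z^j=\sum_k c_{j,k}Y^kF_{j+1-2k}$, the base case $j=1$ is immediate since $c_{1,0}=1$ and $F_2=Z$. For the inductive step I would multiply the hypothesis by $Z$ and push $Z$ inside via the multiplicative rule, obtaining
\[
Z^{j+1} = \sum_k c_{j,k}\,Y^k F_{j+2-2k} + \sum_k c_{j,k}\,Y^{k+1}F_{j-2k}.
\]
Reindexing the second sum by $k\mapsto k-1$ and collecting the coefficient of $Y^kF_{(j+1)+1-2k}$ gives $c_{j,k}+c_{j,k-1}$, and a one‑line Pascal computation yields $c_{j,k}+c_{j,k-1}=\binom{j}{k}-\binom{j}{k-2}=c_{j+1,k}$, which closes the induction.

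The hard part will be the boundary bookkeeping of the summation when $j$ is even versus odd: one must check that the reindexing produces no spurious $F_0$ term and that the new top term appears with the correct coefficient. I would handle this uniformly by fixing the conventions $F_0=0$ and $c_{j,k}=0$ for $k<0$ or $k>\lfloor j/2\rfloor$, after verifying that the ballot numbers indeed vanish outside that range; the point is that the $YF_{m-1}$ contribution silently disappears when $m=1$ because $F_0=0$, and simultaneously $c_{j,k}=0$ at that same edge, so the two effects are consistent and every index shift is legitimate.

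As an alternative that sidesteps the boundary care entirely, I would give a Binet argument. Introduce the roots $\alpha,\beta$ of $\lambda^2-Z\lambda+Y$, so that $\alpha+\beta=Z$, $\alpha\beta=Y$, and $F_m=(\alpha^m-\beta^m)/(\alpha-\beta)=\sum_{i=0}^{m-1}\alpha^i\beta^{m-1-i}$. Then $Y^kF_{j+1-2k}=\sum_{l=k}^{j-k}\alpha^l\beta^{j-l}$, so the coefficient of $\alpha^l\beta^{j-l}$ on the right‑hand side is $\sum_{k=0}^{\min(l,\,j-l)}c_{j,k}=\binom{j}{\min(l,\,j-l)}=\binom{j}{l}$, using the telescoping of the ballot numbers and the symmetry $\binom{j}{j-l}=\binom{j}{l}$. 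Since $Z^j=(\alpha+\beta)^j=\sum_l\binom{j}{l}\alpha^l\beta^{j-l}$, the two sides agree coefficient by coefficient, which proves the identity.
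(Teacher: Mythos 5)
Your proposal is correct, and in fact it supplies more than the paper does: the paper states this lemma without proof, simply importing it as Lemma 6.4 of the cited reference \cite{WLZ1}, so there is no in-paper argument to match against. Both of your routes are sound. The inductive route is essentially the natural recurrence-based argument one would expect behind the citation: rewriting $ZF_m=F_{m+1}+YF_{m-1}$ (with $F_0:=0$) and verifying the Pascal-type relation $c_{j,k}+c_{j,k-1}=c_{j+1,k}$ for the ballot numbers $c_{j,k}=\binom{j}{k}-\binom{j}{k-1}$ does close the induction, including at the top index, as one checks separately for $j$ even and $j$ odd. Your Binet argument is a genuinely slicker alternative: passing to $\mathbb{Z}[\alpha,\beta]$ via $Y\mapsto\alpha\beta$, $Z\mapsto\alpha+\beta$ (injective on $\mathbb{Z}[Y,Z]$ since the image is the ring of symmetric polynomials), the identity $Y^kF_{j+1-2k}=\sum_{l=k}^{j-k}\alpha^l\beta^{j-l}$ plus the telescoping $\sum_{k=0}^{K}c_{j,k}=\binom{j}{K}$ and the symmetry $\binom{j}{\min(l,j-l)}=\binom{j}{l}$ reduces everything to the binomial expansion of $(\alpha+\beta)^j$, with no boundary cases at all.

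One small inaccuracy to fix in the inductive write-up: your claim that the ballot numbers ``vanish outside the range'' $0\le k\le\lfloor j/2\rfloor$ is false for even $j$ at $k=\lfloor j/2\rfloor+1$; e.g.\ $\binom{2}{2}-\binom{2}{1}=-1\neq0$. The formula $c_{j,k}$ vanishes only at the odd-$j$ edge $2k=j+1$. This does not break your induction, because at the even-$j$ edge the offending index occurs only in a term carrying the factor $F_0=0$ (exactly the mechanism you describe), but the fiat convention $c_{j,k}=0$ beyond $\lfloor j/2\rfloor$ then disagrees with the closed formula there, so the Pascal identity must be invoked with the closed formula throughout and the edge term discarded via $F_0=0$, not via the coefficient. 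Stated with that care, the induction is complete; alternatively, your Binet argument stands on its own and sidesteps the issue entirely.
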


Denote by $y^iz^j$ the image of $Y^iZ^j$ under the natural map $\mathbb{C}[Y,Z]\rightarrow\mathbb{C}[Y,Z]/I$. Then the set $\{y^iz^j\mid 0\leq i\leq n-1,0\leq j\leq n-2\}$ forms a basis of $\mathbb{C}[Y,Z]/I$. By Lemma \ref{lem201509}, the following equation holds in $\mathbb{C}[Y,Z]/I$:
\begin{linenomath} $$y^iz^j=\sum_{k=0}^{\lfloor\frac{j}{2}\rfloor}\begin{pmatrix}
                                                           j \\
                                                           k \\
                                                         \end{pmatrix}
\frac{j+1-2k}{j+1-k}y^{i+k}F_{j+1-2k}(y,z).$$
\end{linenomath}  Thus, $\{y^iF_{j}(y,z)\mid 0\leq i\leq n-1,1\leq j\leq n-1\}$ is also a basis of $\mathbb{C}[Y,Z]/I$. In the following, we shall use this basis to describe the bi-Frobenius algebra structure on the algebra  $\mathbb{C}[Y,Z]/I$.  Following from \cite[Remark 4.4 (3)]{WLZ1} we have \begin{equation}\label{equ20155}y^iF_j(y,z)y^kF_l(y,z)=
\sum_{t=\zeta{(j,l)}}^{\min\{j,l\}-1}y^{i+k+t}F_{j+l-1-2t}(y,z),\end{equation}
where $\zeta{(j,l)}=0$ if $j+l-1<n$, and $\zeta{(j,l)}=j+l-n$ if $j+l-1\geq n$.

Define the following two maps $\varepsilon:\mathbb{C}[Y,Z]/I\rightarrow\mathbb{C}$  by
$\varepsilon(y^iF_{j}(y,z))=F_{j}(1,2\cos\frac{\pi}{n})$ and $\bigtriangleup:\mathbb{C}[Y,Z]/I\rightarrow\mathbb{C}[Y,Z]/I\otimes\mathbb{C}[Y,Z]/I$ by $\bigtriangleup(y^iF_j(y,z))=\frac{1}{F_j(1,2\cos\frac{\pi}{n})}y^iF_j(y,z)\otimes y^iF_j(y,z).$ Then both $\varepsilon$ and $\bigtriangleup$ are well-defined since $F_n(1,2\cos\frac{\pi}{n})=0$ (see \cite[Theorem 7.3]{WLZ2}). Moreover, it is straightforward to check that $(\triangle\otimes id)\bigtriangleup=(id \otimes\bigtriangleup)\bigtriangleup$ and $(id \otimes\varepsilon)\bigtriangleup=id=(\varepsilon\otimes id)\bigtriangleup$.  Hence $(\mathbb{C}[Y,Z]/I, \bigtriangleup, \varepsilon)$ is a coalgebra.

Define the linear map $\phi:\mathbb{C}[Y,Z]/I\rightarrow\mathbb{C}$ by
\begin{linenomath} $$\phi(y^iF_j(y,z))=\begin{cases}
1, & i=0,j=1,\\
0, & \text{otherwise}.
\end{cases}$$
\end{linenomath}
Then $(\mathbb{C}[Y,Z]/I,\phi)$ is a Frobenius algebra and
\begin{linenomath} $$\{y^iF_j(y,z),y^{1-i-j}F_j(y,z)\mid 0\leq i\leq n-1,1\leq j\leq n-1\}$$\end{linenomath}  forms a pair of dual bases of $\mathbb{C}[Y,Z]/I$ with respect to the integral $\phi$.

Denote by $t=\sum_{i=0}^{n-1}\sum_{j=1}^{n-1}F_j(1,2\cos\frac{\pi}{n})y^iF_j(y,z)$. Then
\begin{linenomath} $$\bigtriangleup(t)=\sum t_1\otimes t_2=\sum_{i=0}^{n-1}\sum_{j=1}^{n-1}y^iF_j(y,z)\otimes y^iF_j(y,z).$$\end{linenomath}
Define the linear map $S:\mathbb{C}[Y,Z]/I\rightarrow\mathbb{C}[Y,Z]/I$ by
\begin{linenomath} $$S(f)=\sum\phi(t_1f)t_2=\sum_{i=0}^{n-1}\sum_{j=1}^{n-1}\phi(y^iF_j(y,z)f)y^iF_j(y,z).$$\end{linenomath}
We have the following result.

\begin{thm}
The  quadruple $(\mathbb{C}[Y,Z]/I,\phi,t,S)$ is a bi-Frobenius algebra.
\end{thm}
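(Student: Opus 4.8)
The plan is to obtain the statement from the abstract theory of Section~5 rather than re-deriving each axiom by hand. Recall that $R_{st}(H)=\mathbb{C}\otimes_{\mathbb{Z}}r_{st}(H)\cong\mathbb{C}[Y,Z]/I$ as $\mathbb{C}$-algebras and that, by Example~\ref{exa201509}, the form $[-,-]_{st}$ is non-degenerate for the Radford Hopf algebra. Hence $R_{st}(H)$ carries the group-like algebra structure built in Section~5, and by Remark~\ref{rem2.6}(1) the resulting quadruple $(R_{st}(H),\phi,t,S)$ is a bi-Frobenius algebra. Since transporting a bi-Frobenius algebra along a map that is simultaneously an algebra and a coalgebra isomorphism again yields a bi-Frobenius algebra, the whole task reduces to checking that the explicit data $(\phi,t,S,\bigtriangleup,\varepsilon)$ introduced on $\mathbb{C}[Y,Z]/I$ are precisely the images of the Section~5 data under the identification $\mathbb{C}[Y,Z]/I\cong R_{st}(H)$. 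The Frobenius-algebra half $(\mathbb{C}[Y,Z]/I,\phi)$ and the coalgebra $(\mathbb{C}[Y,Z]/I,\bigtriangleup,\varepsilon)$ have already been exhibited above, so only the matching of structures remains.

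First I would fix the basis correspondence, sending the class of each indecomposable non-projective module to the matching element $y^iF_j(y,z)$ (with $0\le i\le n-1$, $1\le j\le n-1$), so that the unit $\kk$ goes to $y^0F_1=1$. The core computation is that $\varepsilon$, defined by $\varepsilon(y^iF_j)=F_j(1,2\cos\frac{\pi}{n})$, is an algebra homomorphism. Writing $F_j(1,2\cos\frac{\pi}{n})=\frac{\sin(j\pi/n)}{\sin(\pi/n)}$, this amounts to the product-to-sum identity applied to the fusion rule (\ref{equ20155}): the full Chebyshev product $\frac{\sin(j\pi/n)}{\sin(\pi/n)}\cdot\frac{\sin(l\pi/n)}{\sin(\pi/n)}$ expands as $\sum_{t=0}^{\min\{j,l\}-1}F_{j+l-1-2t}(1,2\cos\frac{\pi}{n})$, and the terms removed by the truncation bound $\zeta(j,l)$ of (\ref{equ20155}) cancel in pairs under the reflection $F_{n+s}(1,2\cos\frac{\pi}{n})=-F_{n-s}(1,2\cos\frac{\pi}{n})$, with central term $F_n(1,2\cos\frac{\pi}{n})=0$. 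Being an algebra character that is strictly positive on the basis, $\varepsilon$ must be the Frobenius--Perron character, i.e.\ $\varepsilon=\text{FPdim}$. With this identification the comultiplication $\bigtriangleup(y^iF_j)=\frac{1}{F_j(1,2\cos\frac{\pi}{n})}y^iF_j\otimes y^iF_j$, the functional $\phi$, and the integral $t=\sum_{i,j}F_j(1,2\cos\frac{\pi}{n})y^iF_j$ coincide term by term with $\bigtriangleup(\overline{[X_i]})$, $\phi$, and $t=\sum_i\text{FPdim}(\overline{[X_i]})\overline{[X_i]}$ of Section~5.

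It remains to match the antipode $S(f)=\sum\phi(t_1f)t_2$. I would verify that $S$ equals the duality operator $*$, that is $S(y^iF_j)=y^{1-i-j}F_j$, using that $\{y^iF_j,\ y^{1-i-j}F_j\}$ is a pair of $\phi$-dual bases; this is exactly the formula $S(b_i)=b_{i^*}$ of Remark~\ref{rem2.6}(1), and its anti-algebra and anti-coalgebra character then translates into conditions (G1)--(G3) of Definition~\ref{defP1}. Once the basis correspondence identifies the tuple $(\phi,t,S,\bigtriangleup,\varepsilon)$ with the group-like data transported from $R_{st}(H)$, Remark~\ref{rem2.6}(1) yields that $(\mathbb{C}[Y,Z]/I,\phi,t,S)$ is a bi-Frobenius algebra. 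I expect the main obstacle to be the multiplicativity of $\varepsilon$ in the previous step: getting the truncation index $\zeta(j,l)$ of (\ref{equ20155}) to align exactly with the pairwise cancellation of the removed Chebyshev values is the delicate bookkeeping, and it is precisely where the defining relation $F_n(1,2\cos\frac{\pi}{n})=0$ of the ideal $I$ enters.
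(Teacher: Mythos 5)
Your proposal is correct in substance but follows a genuinely different route from the paper's. The paper never invokes the Section~5 machinery in its Section~6 proof: it works entirely inside the polynomial model and applies Doi's criterion (\cite[Lemma 1.2]{Doi3}), computing $S(y^kF_l(y,z))=y^{1-k-l}F_l(y,z)$ directly from the definition $S(f)=\sum\phi(t_1f)t_2$ and the fusion rule (\ref{equ20155}), then checking by hand that $S$ is an anti-algebra map (using commutativity of $\mathbb{C}[Y,Z]/I$) and an anti-coalgebra map; notably this route never needs $\varepsilon$ to be multiplicative, since a bi-Frobenius algebra does not require a multiplicative counit. You instead transport the group-like (hence bi-Frobenius, by Remark \ref{rem2.6}) structure of Section~5 across the isomorphism $R_{st}(H)\cong\mathbb{C}[Y,Z]/I$, which obliges you to establish two identifications the paper deliberately avoids: first, that $\{y^iF_j(y,z)\}$ corresponds to the canonical basis of indecomposable non-projective modules --- the paper pointedly says it uses a ``new basis rather than the canonical basis,'' so this correspondence must be drawn explicitly from \cite{WLZ1, WLZ2} (for instance, (\ref{equ20155}) is quoted from \cite[Remark 4.4 (3)]{WLZ1}, where it is precisely the tensor decomposition of indecomposables), and it is the one step you should not treat as mere bookkeeping; second, that $\varepsilon=\text{FPdim}$, which you obtain correctly: your Chebyshev cancellation is sound, since for $j+l-1\geq n$ the omitted indices $j+l-1-2t$ with $0\leq t<j+l-n$ form a set symmetric about $n$, the values $F_{n\pm a}(1,2\cos\frac{\pi}{n})=\mp\sin(a\pi/n)/\sin(\pi/n)$ cancel in pairs, $F_n(1,2\cos\frac{\pi}{n})=0$ kills the central term, and then positivity plus the uniqueness of positive characters on a transitive fusion ring (available from \cite{etingof}, given the transitivity proved in Section~5) identifies $\varepsilon$ with $\text{FPdim}$. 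What each approach buys: the paper's verification is shorter, elementary and self-contained within Section~6; yours is more conceptual and delivers extra information --- the explicit Frobenius--Perron dimensions $\text{FPdim}(y^iF_j(y,z))=\sin(j\pi/n)/\sin(\pi/n)$, the multiplicativity of $\varepsilon$, and the recognition of $S$ as the duality operator $*$ --- at the cost of the additional citations needed to pin down the basis correspondence.
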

\begin{proof}
According to \cite[Lemma 1.2]{Doi3}, if the map $S$ is both anti-algebra and anti-coalgebra automorphisms, then $(\mathbb{C}[Y,Z]/I,\phi,t,S)$ is a bi-Frobenius algebra. Indeed, by the definition of $S$ and the equality (\ref{equ20155}), we have
\begin{linenomath}
\begin{align*}
S(y^kF_l(y,z))&=\sum_{i=0}^{n-1}\sum_{j=1}^{n-1}\phi(y^iF_j(y,z)y^kF_l(y,z))y^iF_j(y,z)\\
&=\sum_{i=0}^{n-1}\sum_{j=1}^{n-1}\phi(\sum_{t=\zeta{(j,l)}}^{\min\{j,l\}-1}y^{i+k+t}F_{j+l-1-2t}(y,z))y^iF_j(y,z).
\end{align*}
\end{linenomath}
By the definition of $\phi$, we have $\phi(y^{i+k+t}F_{j+l-1-2t}(y,z))=1$ if and only if $i,j$ and $t$ satisfy $n\mid i+k+t$ and $j+l-1-2t=1$. Note that $t\leq\min\{j,l\}-1$. The equality $j+l-1-2t=1$ implies that $j=l$. In this case, $t=l-1$ and $n\mid i+k+l-1$. It follows that $S(y^kF_l(y,z))=y^{1-k-l}F_l(y,z)$ and $S$ maps the basis to its dual basis, hence the map $S$ is bijective. In particular, $S(1)=1$ and
\begin{linenomath}
\begin{align*}
S(y^iF_j(y,z)y^kF_l(y,z))&=S(\sum_{t=\zeta(j,l)}^{\min\{j,l\}-1}y^{i+k+t}F_{j+l-1-2t}(y,z))\\
&=\sum_{t=\zeta(j,l)}^{\min\{j,l\}-1}y^{1-(i+k+t)-(j+l-1-2t)}F_{j+l-1-2t}(y,z)\\
&=y^{1-i-j}F_j(y,z)y^{1-k-l}F_l(y,z)\\
&=S(y^iF_j(y,z))S(y^kF_l(y,z)).
\end{align*}
\end{linenomath}
We conclude that $S$ is an anti-algebra map since $\mathbb{C}[Y,Z]/I$ is a commutative algebra. In addition, \begin{linenomath} $$(\varepsilon\circ S)(y^iF_j(y,z))=\varepsilon(y^{1-i-j}F_j(y,z))=\varepsilon(y^iF_j(y,z))$$\end{linenomath}
and
\begin{linenomath}
\begin{align*}(\bigtriangleup\circ S)(y^{i}F_j(y,z))&=\bigtriangleup(y^{1-i-j}F_j(y,z))\\
&=\frac{1}{F_j(1,2\cos\frac{\pi}{n})}y^{1-i-j}F_j(y,z)\otimes y^{1-i-j}F_j(y,z)\\
&=((S\otimes S)\circ\bigtriangleup^{\text{op}})(y^iF_j(y,z)).
\end{align*}
\end{linenomath}
It follows that $S$ is an anti-coalgebra map on $\mathbb{C}[Y,Z]/I$.
\end{proof}

\begin{rem}
Note that $\{y^iz^j\mid 0\leq i\leq n-1,0\leq j\leq n-2\}$ is a basis of $\mathbb{C}[Y,Z]/I$. The bi-Frobenius algebra structure on $(\mathbb{C}[Y,Z]/I,\phi,t,S)$ can also be described by this basis as follows:
\begin{itemize}
\item $\bigtriangleup(y^iz^j)=\sum_{k=0}^{\lfloor\frac{j}{2}\rfloor}\begin{pmatrix}
                                                           j \\
                                                           k \\
                                                         \end{pmatrix}
\frac{j+1-2k}{(j+1-k)F_{j+1-2k}(1,2\cos\frac{\pi}{n})}y^{i+k}F_{j+1-2k}(y,z)\otimes y^{i+k}F_{j+1-2k}(y,z);$
\item $\phi(y^iz^j)=\begin{cases}
\begin{pmatrix}
                                                           j \\
                                                           \frac{j}{2} \\
                                                         \end{pmatrix}
\frac{2}{j+2}, & 2\mid j\ \text{and}\ n\mid i+\frac{j}{2},\\
0, & \text{otherwise};
\end{cases}$

\item $t=\sum_{i=0}^{n-1}\sum_{j=1}^{n-1}F_j(1,2\cos\frac{\pi}{n})y^iF_j(y,z)$;

\item $S(y^iz^j)=\sum_{k=0}^{\lfloor\frac{j}{2}\rfloor}\begin{pmatrix}
                                                           j \\
                                                           k \\
                                                         \end{pmatrix}
\frac{j+1-2k}{j+1-k}y^{k-i-j}F_{j+1-2k}(y,z).$
\end{itemize}
\end{rem}

 \vskip5pt


\begin{thebibliography}{99}
\bibliographystyle{siam}

\bibitem{AAI}
N. Andruskiewitsch, I. Angiono, A. G. Iglesias, et al, From Hopf algebras to tensor categories, Conformal field theories and tensor categories, Springer Berlin Heidelberg, (2014): 1-31.

\bibitem{ARS}
M. Auslander, I. Reiten, S. O. Smal$\phi$, Representation theory of Artin algebras, Cambridge Studies in Advanced Mathematics, Vol.{\bf 36}, Cambridge, 1994.

\bibitem{BA}
B. Bakalov, A. A. Kirillov, Lectures on tensor categories and modular functors, Providence: AMS, 2001.

\bibitem{Be}
D. J. Benson, R. A. Parker, The Green ring of a finite group, J. Algebra {\bf 87} (1984): 290-331.

\bibitem{BC}
D. J. Benson, J. F. Carlson, Nilpotent elements in the Green ring, J. Algebra {\bf104} (1985): 329-350.

\bibitem{Car}
J. F. Carlson, The dimensions of periodic modules over modular group algebras, Illinois J. Math. {\bf 23} (2) (1979): 295-306.

\bibitem {Ch}
H. Chen, The Green ring of Drinfeld double $D(H_4)$, Algebras and Representation Theory {\bf 17} (5) (2014): 1457-1483.

\bibitem{CVZ}
H. Chen, F. V. Oystaeyen and Y. Zhang, The Green  rings of Taft algebras, Proc. Amer. Math. Soc. {\bf 142} (2014): 765-775.

\bibitem{Ci}
C. Cibils, A quiver quantum group, Commun. Math. Phys. {\bf 157}
(1993): 459-477.

\bibitem{DH}
E. Darp\"{o}, M. Herschend, On the representation ring of the polynomial algebra over perfect field, Math. Z {\bf 265} (2011): 605-615.

\bibitem{Doi1}
Y. Doi, Bi-Frobenius algebras and group-like algebras, Lecture notes in pure and applied Mathematics (2004): 143-156.

\bibitem{Doi2}
Y. Doi, Group-like algebras and their representations, Commun. Algebra {\bf 38} (7), (2010): 2635-2655.

\bibitem{Doi3}
Y. Doi, Substructures of bi-Frobenius algebras, J. Algebra {\bf 256} (2002): 568-582.

\bibitem{DT}
Y. Doi, M. Takeuchi, BiFrobenius algebras, Contemp. Math. {\bf 267} (2000): 67-98.

\bibitem{EGST}
K. Erdmann, E. L. Green, N. Snashall, R. Taillefer, Representation theory of the Drinfeld doubles of a family of Hopf algebras, J. Pure Appl. Algebra {\bf 204} (2) (2006): 413-454.

\bibitem{etingof}
P. Etingof, S. Gelaki, D. Nikshych, V. Ostrik, Tensor categories, lecture note for the MIT course 18.769, (2009). available at: www-math.mit.edu/ etingof/tenscat. pdf.

\bibitem{GMS}
E. L. Green, E. N. Marcos, and ${\O}$. Solberg, Representations and almost split sequences for Hopf algebras, Representation theory of algebras (Cocoyoc, 1994) (1996): 237-245.

\bibitem{Ha}
Mariana Haim, Group-like algebras and Hadamard matrices, J. Algebra {\bf 308} (2007): 215-235.

\bibitem{Hap}
D. Happel, Triangulated categories in the representation of finite dimensional algebras, Cambridge University Press, 1988.

\bibitem{HOYZ}
H. Huang, F. V. Oystaeyen, Y. Yang, and Y. Zhang, The Green rings of pointed tensor categories of finite type, J. Pure Appl. Algebra {\bf 218} (2014): 333-342.

\bibitem{LR}
R. G. Larson, D. E. Radford, Semisimple cosemisimple Hopf algebras, Amer. J. Math. {\bf 109} (1987): 187-195.

\bibitem{LH}
Y. Li, N. Hu, The Green rings of the 2-rank Taft algebra and its two relatives twisted, J. Algebra {\bf 410} (2014): 1-35.

\bibitem{LZ}
L. Li, Y. Zhang, The Green rings of the generalized Taft Hopf algebras, Contemp. Math. {\bf 585} (2013): 275-288.

\bibitem{Lo}
M. Lorenz, Representations of finite-dimensional Hopf algebras, J. Algebra {\bf 188} (1997): 476-505.

\bibitem{Mon}
S. Montgomery, Hopf Algebras and their actions on rings, CBMS Series
in Math., Vol. {\bf 82}, AMS, Providence, 1993.

\bibitem{NR}
W.D. Nichols and M. B. Richmond,  The Grothendieck algebra of a Hopf algebra I, Comm. Alg. 26 (1998), no. 4, 1081-1095.

\bibitem{Rad}
D. E. Radford, On the coradical of a finite-dimensional Hopf algebra, Proc. Amer. Math. Soc. {\bf 53} (1) (1975): 9-15.

\bibitem{Swe}
M. E. Sweedler, Hopf Algebras, Benjamin, New York, 1969.

\bibitem{Wa}
M. Wakui, Various structures associated to the representation categories of eight-dimensional nonsemisimple Hopf algebras,
Algebras and Representation Theory {\bf 7} (2004): 491-515.

\bibitem{WLZ1}
Z. Wang, L. Li, Y. Zhang, Green rings of pointed rank one Hopf algebras of nilpotent type, Algebras and Representation Theory {\bf 17} (6) (2014): 1901-1924.

\bibitem{WLZ2}
Z. Wang, L. Li and Y. Zhang, Green rings of pointed rank one Hopf algebras of non-nilpotent type, J. Algebra {\bf 449} (2016): 108-137.

\bibitem{Wi2}
S. J. Witherspoon, The representation ring and the centre of a Hopf algebra, Canad. J. Math. {\bf 51} (4) (1999): 881-896.

\bibitem{Y}
S. Yang, Finite dimensional representations of $u$-Hopf algebras, Commun. Algebra {\bf 29} (12) (2001): 5359-5370.

\bibitem{Zhu}
Y. Zhu, Hopf algebras of prime dimension, Internat. Math. Res. Notices {\bf 1} (1994): 53-59.

\end{thebibliography}
\end{document}